\newcommand{\bbC}{{\mathbb{C}}}
\newcommand{\bbD}{{\mathbb{D}}}
\newcommand{\bbN}{{\mathbb{N}}}
\newcommand{\T}{{\mathbb{T}}}
\newcommand{\bbR}{{\mathbb{R}}}
\newcommand{\bbZ}{{\mathbb{Z}}}
\newcommand{\sfE}{{\mathsf{E}}}
\newcommand{\SL}{{\mathrm{SL}}}
\renewcommand{\Im}{\operatorname{Im}}
\renewcommand{\Re}{\operatorname{Re}}
\newcommand{\Trace}{\mathrm{Tr}}
\newtheorem{theorem}{Theorem}[section]
\newtheorem{coro}[theorem]{Corollary}
\newtheorem{lemma}[theorem]{Lemma}
\theoremstyle{definition}
\newtheorem{definition}[theorem]{Definition}
\newtheorem{remark}[theorem]{Remark}
\newtheorem{conjecture}[theorem]{Conjecture}
\definecolor{purple}{rgb}{.5,0,1}
\definecolor{orange}{rgb}{1,.5,0}
\definecolor{green}{rgb}{0,.4,0}
\numberwithin{equation}{section}
\begin{document}

\title[On the Kotani--Last conjecture for the Dirac Operator]{On the Kotani--Last conjecture for the Dirac Operator}

\author[N. Davis]{Nyah Davis}
\address{N. Davis: Department of Mathematics, Rice University, Houston, TX 77005, USA}
\email{\href{mailto:nd53@rice.edu}{nd53@rice.edu}}
\author[\'I. Emilsd\'ottir]{\'Iris Emilsd\'ottir}
\address{\'I. Emilsd\'ottir: Department of Mathematics, Rice University, Houston, TX 77005, USA}
\email{\href{mailto:irist@rice.edu}{irist@rice.edu}}
\author[L. Li]{Long Li}
\address{L. Li: Department of Mathematics, Rice University, Houston, TX 77005, USA}
\email{\href{mailto:longli@rice.edu}{longli@rice.edu}}
\author[H. Liang]{Hangqi Liang}
\address{H. Liang: Department of Mathematics, Rice University, Houston, TX 77005, USA}
\email{\href{mailto:hl168@rice.edu}{hl168@rice.edu}}


\begin{abstract}
We prove a dichotomy of almost periodicity for reflectionless one-dimensional Dirac operators whose spectra satisfy certain geometric conditions, extending work of Volberg--Yuditskii.
We also construct a weakly mixing Dirac operator with a non-constant continuous potential whose spectrum is purely absolutely continuous, adapting Avila's argument for continuous Schr\"odinger operators. In particular, we disprove the Kotani--Last conjecture in the setting of one-dimensional Dirac operators.
\end{abstract}


\setcounter{tocdepth}{1}


\keywords{}


\maketitle
\tableofcontents

\section{Introduction}
We consider the one-dimensional Dirac operator acting on $L^2(\bbR,\bbC^2)$ given by
\begin{equation}\label{eq:diracOper}
\Lambda_\varphi=\begin{bmatrix}i&0\\0&-i\end{bmatrix}\frac{d}{dx}+\begin{bmatrix}0&\varphi\\\overline{\varphi}&0\end{bmatrix},
\end{equation}
where the {\it potential} $\varphi:\bbR\to\bbC$ satisfies the uniformly local integrability condition
\begin{equation}\label{eq:unLocalIntegrable}
\sup_{x\in\bbR}\int_x^{x+1}|\varphi(t)|^2dt<\infty.    
\end{equation}
This operator appears as the Lax operator in the Lax pair formalism for the cubic defocusing nonlinear Schr\"odinger equation \cite{ZS1972} and in the AKNS hierarchy \cite{AKNS1974}. Background on spectral theory in this context can be found in
\cite{BBEIM1994,GH03,GrebertKappeler2014}; for connections with Schr\"odinger operators and recent developments, see \cite{EGFR14,BLY1,EFGL, BLY2, FLLZ25} and references therein. In this paper, our focus is on almost periodicity of the potential and the resulting spectral type. 
For a detailed overview and recent advances, we refer the reader to \cite{LSS2023, LSS2024ETDS}. 
Here, we adopt the following standard notion: a function $f:\bbR\to \bbC$ is said to be {\it uniformly almost periodic} if for every $\epsilon>0$ the set 
    \[\{\tau\in\bbR:\Vert f(x+\tau)-f(x)\Vert_{\infty}<\epsilon\}\]
    is relatively dense in $\bbR,$ where $\Vert f\Vert_\infty=\sup_{x\in\bbR}|f(x)|$. 
We call the Dirac operator $\Lambda_\varphi$ \eqref{eq:diracOper} {\it almost periodic} if its potential is an almost periodic function. Such operators have been studied extensively in the setting of Jacobi matrices, CMV matrices, and continuous Schr\"odinger operators. 
A central phenomenon of interest is the presence of absolutely continuous spectrum \cite{ds, Kotani1984,  eliasson, Jitomirskaya1999Annals,BJ02, aj1, Avila1}.
While all periodic operators are known to have purely absolutely continuous spectrum, there exist limit-periodic and quasi-periodic operators with either purely or partially absolutely continuous spectrum. This motivates the following conjecture, which was made popular by \cite{KK1988, Damanik2007, Jitomirskaya2007, Simon2007}, see also \cite[Chapter 6]{DF24}.
\begin{conjecture}[Kotani--Last Conjecture]\label{conj:KLC}
    The presence of absolutely continuous spectrum implies almost periodicity of the potential.
\end{conjecture}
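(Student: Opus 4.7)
The plan is to attack Conjecture \ref{conj:KLC} in the Dirac setting via the two-step program that has succeeded for Schr\"odinger and Jacobi operators under extra geometric assumptions on the spectrum. I would start from an ergodic family $(\Lambda_{\varphi_\omega})_{\omega\in\Omega}$ whose almost-sure spectrum $\Sigma$ is purely absolutely continuous with positive Lebesgue measure. The first step is to invoke the Dirac analogue of Kotani's theorem: on $\Sigma$ the Lyapunov exponent vanishes, the two half-line Weyl $m$-functions are complex conjugates on $\Sigma$, and for almost every $\omega$ the operator $\Lambda_{\varphi_\omega}$ is reflectionless on $\Sigma$. Combined with the Borg--Marchenko uniqueness theorem, which says that the potential is determined by the Weyl $m$-function, this reduces the problem to showing that the set $\cR(\Sigma)$ of Dirac operators reflectionless on $\Sigma$ is compact in a translation-invariant topology and that the translation flow acts on it almost periodically.

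The second step would be to realize $\cR(\Sigma)$ as a generalized isospectral torus. For finite-gap $\Sigma$ this is classical from the AKNS hierarchy and produces a linear flow on a real torus, which is automatically almost periodic. For general $\Sigma$, I would adapt the Sodin--Yuditskii and Volberg--Yuditskii framework: build a character-automorphic Hardy space on $\bbC\setminus\Sigma$, identify elements of $\cR(\Sigma)$ with characters of $\pi_1(\bbC\setminus\Sigma)$, and read off almost periodicity of $\omega\mapsto\varphi_\omega(\cdot+t)$ from compactness of the character group together with the fact that translation in $t$ corresponds to a one-parameter subgroup. The dichotomy of almost periodicity announced in the abstract of this paper is exactly the analytic input needed to transport this identification to the Dirac setting from the Schr\"odinger/Jacobi situation.

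The hard part is doing this without auxiliary geometric hypotheses on $\Sigma$. The character-automorphic machinery requires the complement of $\Sigma$ to be Parreau--Widom and to satisfy a direct Cauchy theorem type condition (for which Carleson homogeneity of $\Sigma$ is sufficient); without such a condition the $m$-function may fail to admit a controlled pseudocontinuation across $\Sigma$ and $\cR(\Sigma)$ can fail to be compact in $L^\infty$. As the abstract of the present paper indicates, the unconditional form of Conjecture \ref{conj:KLC} is in fact \emph{false} for Dirac operators: an Avila-type weakly mixing construction produces a non-almost-periodic continuous potential with purely absolutely continuous spectrum. A realistic version of this plan is therefore to prove the conjecture conditionally under a geometric assumption on $\Sigma$ such as Carleson homogeneity, and to recognize that any attempt at the general statement must ultimately confront, and be defeated by, such a weakly mixing counterexample.
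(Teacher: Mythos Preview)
Your proposal is not a proof of Conjecture~\ref{conj:KLC}; it is a sketch of why the conjecture \emph{should} hold under extra hypotheses, followed by an acknowledgment that it is false in general. That is the correct diagnosis, but it is not what the paper does with this statement: the paper \emph{disproves} the conjecture, and its two contributions are precisely the parts you wave at but do not attempt. The conditional positive direction you outline --- reflectionlessness on $\Sigma_{ac}$, the divisor/character parametrization of $\cR(\Sigma)$, linearization of the shift, almost periodicity under DCT --- is essentially the content of \cite{BLY2, FLLZ25} and is cited as already known (Theorem~\ref{thm:mainThm}(1)). So the part of your plan that is fleshed out is the part that is not new.

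What is missing is any mechanism for the negative results. For Theorem~\ref{thm:mainThm}(2) the paper does not merely observe that the character-automorphic machinery ``may fail'' when DCT fails; it uses the failure of injectivity of the Abel map at irregular characters, together with the fact that the set of irregular characters has Haar measure zero and that the pulled-back measure on $\cR(\sfE)$ charges every open set, to run a three-step argument (one non-almost-periodic potential $\Rightarrow$ a full-measure set of them $\Rightarrow$ all of them) that actively excludes almost periodicity. Your proposal contains no hint of this. Likewise, for the weakly mixing counterexample (Theorem~\ref{thm:MainThm2}) you simply cite Avila's construction as an obstruction; the paper actually carries out the $(\delta,n)$-padding scheme for Dirac operators, which requires verifying a Dirac-specific monotonicity $\frac{d}{d\lambda}\mathbf{\Theta}[T(\lambda)]<0$ (Lemma~\ref{lem.rotDerivative}) and checking that the $(\epsilon,C,M)$-uniform estimates survive. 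A proposal aimed at Conjecture~\ref{conj:KLC} in this paper's sense should be a proposal to build counterexamples, not to prove a conditional positive theorem.
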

Earlier work provided evidence in favor of the conjecture. In the context of ergodic families, Kotani \cite{Kotani1984, Kotani1987, Kotani1989,Kotani1997} showed that the presence of absolutely continuous spectrum forces the potential to be deterministic, and Remling \cite{Remling2011Annals} proved a related result for Jacobi matrices, where potentials are eventually periodic. However, a series of counterexamples soon emerged: Avila \cite{Avila2015JAMS} disproved the conjecture for both continuum and discrete Schrödinger operators; Volberg--Yuditskii \cite{VolbergYuditskii2014} produced a disproof in the setting of reflectionless Jacobi matrices; and You--Zhou \cite{YouZhou2015IMRN} later provided a simple counterexample. The approach of Volberg--Yuditskii was subsequently adapted to continuum Schrödinger operators and CMV matrices by Damanik--Yuditskii \cite{DamanikYuditskii2016Adv}. 

Our goal here is twofold. First, we extend these disproofs to the setting of reflectionless Dirac operators whose spectrum satisfies suitable conditions. This becomes possible through recent advances in \cite{BLY2, FLLZ25}, which allow the adaptation of the method of Volberg--Yuditskii to our framework. Second, we adapt Avila’s argument from \cite{Avila2015JAMS} to construct an explicit counterexample for the Dirac operator $\Lambda_\varphi$.
This example is particularly striking in that the potential is given by a continuous sampling of a weakly mixing flow, and hence it fails almost periodicity in the strongest possible sense: the potential has no almost periodic component whatsoever.
 
\subsection{Main Results}
Let us first state the main results, deferring the technical hypotheses to Section \ref{sec2}. Let $\mathcal{R}(\sfE)$ denote the class of {\it reflectionless} Dirac operators whose purely absolutely continuous spectrum equals an unbounded closed subset $\sfE$ of $\bbR$. Assume $\sfE$ is in generic position (see Definition~\ref{def:genericPos}) and satisfies the Widom condition (see Definition~\ref{def:WidomDomain}). For the precise formulation of the Direct Cauchy Theorem (DCT), see Definition~\ref{def:DCT}.

\begin{theorem}\label{thm:mainThm}
    Assume that $\varphi\in\mathcal{R}(\mathsf{E})$,  $\Omega=\bbC\setminus\mathsf{E}$ is of Widom type and $\sfE$ satisfies the finite gap length condition (c.f. \eqref{eq:finiteGapCond}), then the following dichotomy holds:
    \begin{enumerate}
        \item if the Direct Cauchy Theorem holds in $\Omega$, then every $\varphi\in\mathcal{R}(\mathsf{E})$ is almost periodic
        \item if the Direct Cauchy Theorem fails in $\Omega$, then none of $\varphi\in\mathcal{R}(\mathsf{E})$ is almost periodic.
    \end{enumerate}
\end{theorem}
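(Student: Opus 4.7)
The plan is to adapt the character-automorphic scheme of Volberg--Yuditskii \cite{VolbergYuditskii2014} (and its continuum Schr\"odinger adaptation \cite{DamanikYuditskii2016Adv}) to the Dirac setting, leaning on the reflectionless m-function machinery recently made available in \cite{BLY2, FLLZ25}. The central object is the compact abelian character group $\pi_1(\Omega)^{*}$ of the Widom domain $\Omega=\bbC\setminus\sfE$. For each $\varphi\in\cR(\sfE)$, the Weyl m-functions $m_\pm(\cdot,\varphi)$ are reflectionless across $\sfE$ and therefore lift to the universal cover $\bbD\to\Omega$ as single-valued functions that transform under a single character $\alpha_\varphi\in\pi_1(\Omega)^{*}$. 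This assignment defines a continuous character map $\Xi:\cR(\sfE)\to\pi_1(\Omega)^{*}$ which intertwines the shift flow $\varphi(\cdot)\mapsto\varphi(\cdot+t)$ with a one-parameter subgroup $\{\eta_t\}_{t\in\bbR}\subset\pi_1(\Omega)^{*}$; the explicit form of $\eta_t$ comes from the exponential factor that translation introduces into the m-function.

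For item (1), the proof reduces to showing that $\Xi$ is a homeomorphism when DCT holds. Surjectivity follows from the trace-formula construction of reflectionless potentials from character data; injectivity and continuity of $\Xi\inv$ follow from the fact that, under DCT, the character-automorphic Hardy space $H^2(\Omega,\alpha)$ contains sufficiently many reproducing kernels to recover $m_\pm$, and hence $\varphi$, from $\alpha$. The Widom condition guarantees that the relevant Blaschke and Green function products converge; the finite gap length condition \eqref{eq:finiteGapCond} provides the uniform high-energy control needed to upgrade character convergence to $L^\infty$-convergence of potentials through the asymptotic expansion of the Dirac m-function. Once $\Xi$ is a homeomorphism, each shift orbit is conjugate to a translation orbit in the compact group $\pi_1(\Omega)^{*}$, its closure is therefore compact in the uniform topology, and Bohr's criterion yields almost periodicity.

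For item (2), when DCT fails, the Volberg--Yuditskii analysis shows that $\Xi$ is not injective: there are distinct $\varphi_1,\varphi_2\in\cR(\sfE)$ with the same character. A density/Baire argument then produces, for any fixed $\varphi\in\cR(\sfE)$, a sequence $t_n\to\infty$ with $\eta_{t_n}\to e$ in $\pi_1(\Omega)^{*}$ while $\|\varphi(\cdot+t_n)-\varphi\|_\infty$ stays bounded below. If $\varphi$ were almost periodic, then for every $\epsilon>0$ its $\epsilon$-almost periods would be relatively dense, and continuity of $\Xi$ together with compactness of the character torus would force some such period to realize $\eta_{t_n}$ nearby, implying $L^\infty$-closeness of $\varphi(\cdot+t_n)$ to $\varphi$. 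This contradiction shows that no element of $\cR(\sfE)$ can be almost periodic.

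The hardest step will be the quantitative passage in item (1) from character convergence to uniform convergence of Dirac potentials. In the Schr\"odinger case this is carried out by expanding the Weyl m-function at infinity and recovering the potential from a trace formula in character-automorphic quantities; the Dirac analog requires the matricial m-function asymptotics and reflectionless parametrization worked out in \cite{BLY2, FLLZ25}, combined with the finite gap length hypothesis to secure absolute and uniform convergence of the resulting series over the full character torus. A secondary technical point to verify is that the character-automorphic Hardy framework is compatible with the Dirac reflection symmetry, which exchanges spinor components rather than acting as scalar complex conjugation; this should be handled by retaining the matricial formalism throughout.
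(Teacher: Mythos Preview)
Your outline for part~(1) matches what the paper invokes from \cite{BLY2, FLLZ25}: under DCT the generalized Abel map is a homeomorphism onto $\pi_1(\Omega)^*\times\T$ (the extra $\T$-factor, absent from your sketch, is a rotation coordinate that is fixed under the shift and plays no role), and conjugacy to a linear flow on a compact abelian group yields almost periodicity. The paper does not reprove this; it cites it.

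Your argument for part~(2) has a genuine gap. You assert, via an unspecified ``density/Baire argument,'' that for \emph{every} $\varphi$ there is a sequence $t_n$ with $\eta_{t_n}\to e$ but $\|\varphi(\cdot+t_n)-\varphi\|_\infty$ bounded below, and then that almost periodicity would force convergence. Neither step is justified: non-injectivity of $\Xi$ on $\cR(\sfE)$ produces $\varphi_1\neq\varphi_2$ over one character $\alpha$, but says nothing about the orbit closure of an arbitrary $\varphi$; and your contradiction step would require $\Xi$ restricted to the hull of $\varphi$ to be injective, which is precisely what is in doubt. The paper instead runs a three-lemma bootstrap. Step one: if both $\varphi_1,\varphi_2$ were almost periodic, translate their common character $\alpha$ toward a \emph{regular} character $\beta$ using minimality of the linear flow --- this needs the generic-position hypothesis on $\sfE$, which you omit entirely; precompactness in $L^\infty$ then forces both $T_{\ell_k}\varphi_i$ to converge to the \emph{unique} $\varphi_\beta$ over $\beta$, contradicting $\|T_\ell\varphi_1-T_\ell\varphi_2\|_\infty\equiv\|\varphi_1-\varphi_2\|_\infty>0$. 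Step two: upgrade ``one bad potential'' to ``full-measure bad'' using a shift-ergodic measure on $\cR(\sfE)$ that charges every open set (the pullback of Haar measure through the Abel map). Step three: if some $\varphi$ were almost periodic, minimality puts a regular $\beta$ from the full-measure bad set in the image of its orbit closure, so the unique $\varphi_\beta$ is simultaneously a uniform limit of translates of $\varphi$ (hence almost periodic) and in the bad set (hence not). The Volberg--Yuditskii result that irregular characters have zero Haar measure is what supplies regular $\beta$ at each stage; this ingredient, along with the ergodic measure and the generic-position assumption, is missing from your plan.
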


\begin{coro}
    There exists a Dirac operator $\Lambda_\varphi$ which is absolutely continuous, but $\varphi$ is not uniformly almost periodic.
\end{coro}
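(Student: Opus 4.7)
The plan is to derive the corollary directly from Theorem~\ref{thm:mainThm}(2): it suffices to exhibit an unbounded closed set $\sfE\subset\bbR$ in generic position such that $\Omega=\bbC\setminus\sfE$ is a Widom domain, $\sfE$ has finite total gap length, the Direct Cauchy Theorem fails on $\Omega$, and the reflectionless class $\mathcal{R}(\sfE)$ is non-empty. For any such $\sfE$, any $\varphi\in\mathcal{R}(\sfE)$ yields a Dirac operator $\Lambda_\varphi$ with purely absolutely continuous spectrum equal to $\sfE$, while the theorem forbids $\varphi$ from being almost periodic, giving the desired counterexample.

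For the construction of $\sfE$, I would adapt the Cantor-type spectra of Volberg--Yuditskii \cite{VolbergYuditskii2014}, as continuized by Damanik--Yuditskii \cite{DamanikYuditskii2016Adv} for continuum Schr\"odinger operators. Starting from the half-line $[0,\infty)$ one excises a countable family of pairwise disjoint open gaps $(a_n,b_n)$ whose lengths $\ell_n=b_n-a_n$ decay fast enough to secure both the Widom condition $\sum_c G_\Omega(c,\infty)<\infty$ (summed over critical points $c$ of the Green's function) and $\sum_n\ell_n<\infty$, while positioning the gaps so that the Direct Cauchy Theorem fails; in \cite{VolbergYuditskii2014, DamanikYuditskii2016Adv} this is arranged by a sufficiently dense clustering of small gaps near a chosen accumulation point. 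Generic position is a countable-codimension condition and can be arranged by a final small perturbation of the gap endpoints that preserves the other three properties. Non-emptyness of $\mathcal{R}(\sfE)$ then follows from the inverse spectral theory for Dirac operators developed in \cite{BLY2, FLLZ25}: on Widom sets of finite gap length in generic position, the relevant Abel--Jacobi map parametrizes a non-trivial torus of reflectionless Dirac operators with spectrum $\sfE$.

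The main obstacle is the geometric tension in the second step: the Widom and finite gap length conditions drive the gaps to shrink and disperse, whereas failure of DCT requires enough local clustering to defeat the Cauchy reproducing formula on $\Omega$. The compatibility of these opposing tendencies has been established in \cite{VolbergYuditskii2014, DamanikYuditskii2016Adv}, and since all four conditions on $\sfE$ are properties of a closed subset of $\bbR$ alone---independent of whether one studies Dirac, Schr\"odinger, or Jacobi operators---their construction transfers to the present setting with only cosmetic changes. The genuinely new input is Theorem~\ref{thm:mainThm} itself, which supplies the Dirac-side dichotomy converting their set-theoretic construction into the counterexample claimed here.
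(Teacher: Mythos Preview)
Your approach is essentially the paper's: apply Theorem~\ref{thm:mainThm}(2) to a set $\sfE$ on which the Widom and finite-gap-length conditions hold but DCT fails. The paper differs in two points of execution. First, instead of the Cantor-type constructions you cite, it uses Yuditskii's example \cite{Yuditskii2011DCTFails} of a \emph{weakly homogeneous} set on which DCT fails, and then invokes Poltoratski--Remling \cite{PoltoratskiRemling2009CMP} to conclude that reflectionless operators on $\sfE$ carry no singular spectrum, hence $\mathcal{R}(\sfE)\neq\emptyset$. You gloss over this step: by definition $\mathcal{R}(\sfE)$ demands \emph{purely} absolutely continuous spectrum, and the inverse theory of \cite{BLY2,FLLZ25} by itself does not exclude a singular component without an extra geometric hypothesis on $\sfE$ such as weak homogeneity. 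Second, since Yuditskii's original set has infinite total gap length, the paper passes through a conformal map (Appendix~\ref{A}) to a set $\tilde{\sfE}$ with summable gaps while preserving Widom and DCT-failure, whereas you propose to build summability in from the start.
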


Since mixing is incompatible with almost periodicity, it would be interesting to show the existence of a weakly mixing Dirac operator with purely absolutely continuous spectrum. In the Schr\"odinger setting (both continuous and discrete), such examples were first constructed by Avila in \cite{Avila2015JAMS}. Adapting his method, we obtain an analogous result for Dirac operators. 
\begin{theorem}\label{thm:MainThm2}
    There exists a non-constant potential obtained from continuously sampling over a weakly mixing flow whose associated Dirac operator has purely absolutely continuous spectrum.
\end{theorem}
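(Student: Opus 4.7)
The plan is to port Avila's construction from \cite{Avila2015JAMS} to the Dirac operator $\Lambda_\varphi$. The strategy is dynamical rather than direct: build a weakly mixing flow $T^t$ on a compact manifold $\Omega$ together with a continuous sampling function $f:\Omega\to\bbC$ as a limit of analytic quasi-periodic models whose associated Dirac operators are known to have purely absolutely continuous spectrum, and show that pure AC spectrum is preserved in the limit. The construction splits naturally into three stages, built on top of one hard new input on the Dirac side.

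The first stage is a Dirac analogue of the Eliasson--Avila almost reducibility theorem: for a Diophantine frequency $\alpha\in\bbR^d$ and a sufficiently small real-analytic $g:\T^d\to\bbC$, the operator $\Lambda_\varphi$ with $\varphi(x)=g(\alpha\cdot x)$ should have purely absolutely continuous spectrum. One rewrites the eigenvalue equation as a quasi-periodic cocycle on $\SU(1,1)$ and iterates KAM to show that it is almost reducible to a constant for every energy in its spectrum. Combined with the Kotani dictionary for Dirac operators encoded in \cite{BLY2, FLLZ25, LSS2024ETDS}, this yields zero Lyapunov exponent and reflectionlessness, hence pure AC spectrum.

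In the second stage I would run an Anosov--Katok style construction on a compact manifold $\Omega$: produce periodic flows $T_n^t$ with periods tending to infinity, conjugating volume-preserving diffeomorphisms $H_{n+1}=H_n\circ h_{n+1}$ with $h_{n+1}$ approximately commuting with $T_n^t$, and sampling functions $g_n$ drawn from the perturbative family of Stage 1, arranged so that the push-forwards $f_n=g_n\circ H_n\inv$ converge uniformly to a continuous nonconstant $f$, the flows $T_n^t$ converge to a weakly mixing $T^t$ with an invariant measure $\mu$, and each intermediate potential $\varphi_n(x)=f_n(T_n^x\omega)$ is conjugate to a purely AC operator from Stage 1. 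The third stage is the passage to the limit, using upper semicontinuity of the Lyapunov exponent along the sequence of Dirac cocycles and the fact that zero Lyapunov exponent together with reflectionlessness of the family forces pure AC spectrum for $\mu$-almost every $\omega$; the non-triviality of $f$ is preserved by keeping the successive corrections $\Vert g_{n+1}-g_n\circ h_{n+1}\inv\Vert_\infty$ summable and by choosing the initial $g_0$ non-constant.

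The main obstacle is Stage 1: a quantitative Dirac almost-reducibility result strong enough to survive the iterated conjugations of the Anosov--Katok scheme. While the Dirac transfer cocycle is structurally close to its Schr\"odinger counterpart, the complex-valued nature of $\varphi$ means one must reverify the key estimates on large deviations, analytic radii, and loss of domain that Avila's scheme consumes at each KAM step. A secondary issue is making sure the nondegeneracy hypotheses of Definition~\ref{def:genericPos} and \eqref{eq:finiteGapCond} --- or suitable ergodic-theoretic replacements compatible with Kotani theory in the Dirac setting --- persist through the conjugations used to manufacture weak mixing, since these feed into the spectral conclusion invoked in Stage 1.
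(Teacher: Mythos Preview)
Your proposal is not a proof but a research program, and it diverges substantially from the paper's actual argument. The paper follows Avila's \emph{continuum} construction, which is built entirely from \emph{periodic} potentials: one starts with a smooth $P$-periodic $\varphi$, applies a $(\delta,n)$-padding operation to obtain a new periodic potential of period $P'=2nP+n\delta$, and iterates. Floquet theory gives complete spectral information at each finite stage, and the only Dirac-specific input is the elementary monotonicity lemma $\frac{d}{d\lambda}\mathbf{\Theta}[T[\varphi](\lambda,t,s)]<0$ for elliptic monodromy, proved by a weak-negativity computation. Pure AC spectrum in the limit is secured not through Lyapunov exponents but by a direct $(\epsilon,C,M)$-uniformity estimate on the spectral measure, stable under padding; weak mixing comes from an $(N,F)$-mixed combinatorial property of the time-changed solenoidal flows, also stable under the construction. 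No KAM, no almost reducibility, no quasi-periodic input.

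Your Stage~1 is precisely the kind of heavy machinery the paper avoids: a quantitative Dirac almost-reducibility theorem is a substantial project in its own right, and you correctly flag it as the main obstacle --- but that means you have not proved the theorem. Your Stage~3 has a separate genuine gap: upper semicontinuity of the Lyapunov exponent does not transfer \emph{pure} absolutely continuous spectrum to the limit. Zero Lyapunov exponent for the limiting cocycle is compatible with singular continuous spectrum, and once the limit is weakly mixing it is no longer quasi-periodic, so the Kotani/reflectionless machinery you invoke from Stage~1 does not apply to it. The paper sidesteps this by controlling the absolutely continuous part of the spectral measure quantitatively at every finite stage and passing to the limit via weak convergence of measures. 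Finally, your appeal to Definition~\ref{def:genericPos} and the finite gap length condition~\eqref{eq:finiteGapCond} is misplaced: those hypotheses belong to Theorem~\ref{thm:mainThm} and play no role in the proof of Theorem~\ref{thm:MainThm2}.
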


\begin{remark}
    In \cite{FLLZ25}, the third named author and his collaborators proved purely absolutely continuous spectrum for a class of quasipeirodic Dirac operators. Based on this, it is likely to extend the result of \cite{YouZhou2015IMRN} to the similar setting for Dirac operators. We do not pursue this in the current work.
\end{remark}

\subsection{Ideas of Proof}
The first claim of Theorem \ref{thm:mainThm} was already established in great generality in \cite{BLY2}, and more specifically in \cite{FLLZ25}. Our contribution is to prove the second claim.

The proof relies on the parameterization of an isospectral family of reflectionless potentials by an infinite torus of divisors. 
This parametrization is possible only if the spectral type is purely absolutely continuous and the finite gap length condition \eqref{eq:finiteGapCond} holds on the spectrum. 
The generalized Abel map then sends elements in the infinite torus of divisors into the character group of the fundamental group of the domain associated to the spectrum on the complex plane plus an extra torus.
This map is well defined only if the Widom condition \eqref{eq:WidomCond} holds, and it is injective if and only if the Direct Cauchy Theorem (DCT) holds. In the case where DCT fails, an analysis of Volberg--Yuditskii \cite{VolbergYuditskii2014} showed that the set of irregular characters (where the generalized Abel map fails to be injective) has zero Haar measure. The pullback of the Haar measure of the character group to the isospectral family and the infinite torus of divisors can be shown to have the property that open sets are given positive weights. Finally, the rationally independent condition (generic position; Definition \ref{def:genericPos}) on the harmonic frequencies of the spectrum ensures that the flow in the character group conjugated by the composition of the parameterization map and the generalized Abel map is minimal and uniquely ergodic. The proof is completed by showing that the minimality and ergodicity are incompatible with uniform almost periodicity. The extra torus did not play a role in the proof because only a fixed element of it was used.

The proof of Theorem \ref{thm:MainThm2} follows Avila's analysis \cite{Avila2015JAMS}
of $SL(2,\bbR)$ actions on the upper half plane and the dynamical realization of slow deformations of periodic potentials. We verify all the necessary ingredients in the Dirac setting, including a monotonicity property for periodic Dirac operators (see Lemma \ref{lem.rotDerivative}).

\section{Preliminaries}\label{sec2}
This section introduces the background material needed for the proofs. We begin with basic facts about $SL(2,\bbR)$ actions, and then review aspects of the inverse spectral theory of reflectionless Dirac operators.  
We refer the reader to \cite{BLY1, BLY2, EFGL, FLLZ25} for further information on Dirac operators. 

\subsection{$SL(2,\bbR)$ actions on $\mathbb{H}$}
Let $\mathbb{H}=\{z\in\bbC:\Im z>0\}$ denote the upper half of the complex plane. Let $A\in\SL(2,\bbR)$, for every $z\in\mathbb{H}$, let $A$ act on $z$ by 
\[A\cdot z=\frac{az+b}{cz+d}\]
where $A=\begin{bmatrix}a&b\\c&d\end{bmatrix}$. Let $\Trace(A)=a+d$ denote the trace of $A$. We call $A$ {\it elliptic} if $|\Trace(A)|<2$.
If $A$ is elliptic, there exists a unique fixed point in the upper half plane $\mathbf{u}=\mathbf{u}[A]$ such that $A\cdot \mathbf{u}=\mathbf{u}$. Clearly if $\mathbf{u}$ is fixed by $A$ then $\overline{\mathbf{u}}$ in the lower half plane is also fixed by $A.$
Moreover, when $|\Trace{A}|<2$, there exists a well defined $\mathbf{\Theta}[A]\in (0,\frac{1}{2})\cup (\frac{1}{2},1)$  and $\mathbf{B}[A]$ such that 
\[\mathbf{B}[A]A\mathbf{B}[A]^{-1}=R_{\mathbf{\Theta}[A]}.\]
Here $\mathbf{B}[A]$ may be chosen canonically up to a factor in $SO(2,\bbR)$  as follows
\[\mathbf{B}[A]=\frac{1}{\Im\mathbf{u}[A]}\begin{bmatrix}1&-\Re\mathbf{u}[A]\\0&\Im \mathbf{u}[A]\end{bmatrix}\]
and $R_{\theta}=\begin{bmatrix}\cos2\pi\theta&-\sin2\pi\theta\\\sin2\pi\theta&\cos2\pi\theta\end{bmatrix}$. As functions of $A$, both $\mathbf{u}, \mathbf{B}$ are analytic.

\subsection{Dirac Operators in Arov Gauge}
Let $\mathscr{U}(z,x)$ be the fundamental solution matrix of the equation $\Lambda_{\varphi}X = zX$ satisfying the initial condition $\mathscr{U}(z,0)=I.$
Then $\mathscr{U}(z,\cdot)$ is a solution of the following canonical system in Arov gauge:
\begin{equation}\label{eq:ArovDirac}
\mathscr{U}(z,\ell)j
=j+\int_{0}^{\ell}\mathscr{U}(z,l)( i z\mathcal{P}(l)-\mathcal{Q}(l)) \, d\mu(l), z\in\bbC,
\end{equation}
where $j=\begin{bmatrix}-1&0\\0&1\end{bmatrix}$, $\mathcal{P},\mathcal{Q}$ are $2\times2$ locally integrable matrix coefficients with respect to a Borel measure $\mu$ on $\bbR$ with $\mathcal{P}\geq 0, \mathcal{Q}=-\mathcal{Q}^*, \Trace(j\mathcal{P})=\Trace(j\mathcal{Q})=0$.

The corresponding initial value problem is the following
\begin{equation}\label{eq:ArovDiff}
\partial_{\mu}\mathcal{U}(z,t)j=\mathcal{U}(z,t)(i z\mathcal{P}(t)-\mathcal{Q}(t)),~ \mathcal{U}(z,0)=I,
\end{equation}
where $\mathcal{U}$ is assumed to be absolutely continuous with respect to $\mu$ and $\partial_{\mu}\mathcal{U}$ is its Radon--Nikodym derivative with respect to $\mu.$ The classical Dirac operator with potential $\varphi$ corresponds to the canonical system \eqref{eq:ArovDiff} with $\mu(\ell)=\ell$, $\mathcal{P}=I$ and  
\[ \mathcal{Q} = \begin{bmatrix} 0 & -i \varphi\\ i \overline{\varphi} & 0\end{bmatrix}.\]
Interested readers may refer to \cite[Section 7]{eichinger2024necessarysufficientconditionsuniversality} for further discussion on preserving Weyl functions. 

Let $j$ be a $2\times 2$ matrix with $j=j^*=j^{-1}$. An entire $2\times 2$ matrix function $A(z)$ is called $j$\emph{-inner} if it obeys $j-A(z)jA(z)^{*}\geq 0$ for all $z\in\bbC^+$ and $j-A(z)jA(z)^*=0$ for $z\in\bbR.$ 

A family of  matrix functions $A(z,\ell)$ parameterized by a real parameter $\ell$ is called $j$\emph{-monotonic} if $A(z,\ell_1)^{-1}A(z,\ell_2)$ is $j$-inner whenever $\ell_1<\ell_2.$

We define the associated {\it Weyl disks} by \begin{equation}\label{eq.WeylDisk}
D(z,\ell)=\{w\in \bbC:\left[w\quad 1\right]A(z,\ell)jA(z,\ell)^*\left[w\quad 1\right]^*\geq 0\}.
\end{equation}
The $j$-monotonicity implies the nesting property of Weyl disks, that is,
$D(z,\ell_2)\subseteq D(z,\ell_1)$ for $\ell_2>\ell_1$;  compare \cite[(2.3), (2.4)]{EGL} and the surrounding discussion. 
Note that  $D(z,\ell)$ is a subset of $\overline{\bbD}$ since $A(z,0)=I.$ 

It turns out that $\cap_{\ell\in(0,\infty)}D(z,\ell)$ is either a disk, corresponding to the limit circle case, or a single point, corresponding to the limit point case.
If $\sup_x\int_x^{x+1}|\varphi(t)|^2 \, d t<\infty$, then we are in the limit point case \cite{EGL}. Define
$$s_+(z)=\cap_{\ell\in(0,\infty)}D(z,\ell)$$
to be the unique common point.
The function $s_+$ is known to be a Schur function, that is, an analytic map $\bbC^+\to\bbD.$
An analogous function $s_-$ for the left half-line is obtained by reflecting through the origin.

\subsection{Reflectionless Potentials}
Let $\sfE$ be the spectrum of the system \eqref{eq:ArovDirac} and $\Omega=\bbC\setminus \sfE$.
Since $\sfE \subseteq \bbR$ is closed, we can write its complement (in $\bbR$) as a countable disjoint union of open intervals,
\[
\bbC\setminus \sfE = \bigcup_j (a_j,b_j).
\]
\begin{definition}\label{def:reflectionless}
We call a pair of Schur functions $(s_+,s_-)$ \emph{reflectionless} with spectrum $\sfE$ if each extends to a single-valued meromorphic function on $\Omega$ and the following hold:
\begin{enumerate}[label={\rm(\alph*)}]
    \item (symmetry condition) $\overline{s_{\pm}(\overline{z})}=1/s_{\pm}(z)$;
    \item (reflectionless condition) $\overline{s_+(\xi+ i 0)}=s_{-}(\xi + i 0)$ for a.e. $\xi\in \sfE$;
    \item $1-s_+(z)s_-(z)\neq 0$ for $z \in \bbR\setminus \sfE.$
\end{enumerate}
\end{definition}
Let $\mathcal{S}(\sfE)$ be the set of $s_+$ belonging to such pairs endowed with the topology of locally uniform convergence of $\bbC$-valued maps on $\Omega$. 
Let $\mathcal{S}_{A}(\sfE)$ be the set of $s_+$ such that the corresponding $s_-$ in the pair satisfies the normalization condition $s_-(i)=0$, which is a compact subset of $\mathcal{S}(\sfE)$, compare \cite{BLY2}.

A potential $\varphi\in L^1_{\rm loc}$ is said to belong to $\mathcal{R}(\sfE)$ if the associated Dirac operator $\Lambda_\varphi$ has spectrum $\sfE$, its spectral type is purely absolutely continuous, and its associated Schur function $s_+$ belongs to $\mathcal{S}_{A}(\sfE)$.

We will endow $\mathcal{R}(\sfE)$ with the {\it strong resolvent topology.} 
The resolvent function of $\Lambda_\varphi$  is related to the Schur functions by the following relations \begin{equation}\label{eq:resolventFunc}
R(z) = i \frac{(1-s_+(z))(1-s_-(z))}{1-s_+(z)s_-(z)}.
\end{equation}

We recall some standard notions (c.f. \cite{Hasumi1983}).

    For a domain $\Omega$, let  $\mathcal{N}(\Omega)$ denote the Nevanlinna class, and $\mathcal{N}^+(\Omega)$ the Smirnov class. 
    We define the anti-linear involution $\sharp$ acting on functions by 
\begin{equation}\label{eq:involution}
f_\sharp(z)
=\overline{f(\bar{z})}.\end{equation}
A function $f\in \mathcal{N}(\Omega)$ has a \emph{pseudocontinuation} if there exists a function $f_*\in \mathcal{N}(\Omega)$ such that
\begin{equation}\label{eq:pseudoCont}
    f_*(z)=\overline{f(z)}\quad \text{for a.e. }z\in\partial\Omega.
\end{equation}

\begin{lemma}[\cite{BLY2}]
If $s_+\in\mathcal{S}(\sfE)$, then $R(z)$ is a Herglotz function, analytic on $\bbC\setminus \sfE$, with the symmetry $R_{\sharp}=R$ and satisfies $\arg R(\xi + i 0)=\frac{\pi}{2}$ for a.e.\ $\xi\in \sfE.$
\end{lemma}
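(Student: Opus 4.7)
The plan is to verify each of the four conclusions --- the symmetry $R_\sharp = R$, the Herglotz property, analyticity on $\Omega := \bbC\setminus\sfE$, and the boundary argument on $\sfE$ --- directly from the defining properties (a)--(c) of $\mathcal{S}(\sfE)$. The symmetry is a short algebraic check: writing $R_\sharp(z) = \overline{R(\bar z)}$ and using (a) to replace $\overline{s_\pm(\bar z)}$ by $1/s_\pm(z)$ gives
\[
R_\sharp(z) \;=\; -i\,\frac{(1 - 1/s_+(z))(1 - 1/s_-(z))}{1 - 1/(s_+(z)s_-(z))},
\]
and multiplying numerator and denominator by $s_+(z)s_-(z)$ produces three sign changes that combine with the prefactor $-i$ to reproduce $R(z)$.

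For the Herglotz property the key identity is
\[
\frac{1}{R(z)} \;=\; -\frac{i}{2}\left[\frac{1+s_+(z)}{1-s_+(z)} + \frac{1+s_-(z)}{1-s_-(z)}\right],
\]
obtained by dividing $1 - s_+s_- = \tfrac{1}{2}\bigl[(1-s_+)(1+s_-) + (1+s_+)(1-s_-)\bigr]$ by $i(1-s_+)(1-s_-)$. Since each $s_\pm$ is Schur on $\bbC^+$, the M\"obius map $(1+s)/(1-s)$ carries $\bbC^+$ into the open right half-plane, so each summand $-i(1+s_\pm(z))/(1-s_\pm(z))$ has strictly negative imaginary part on $\bbC^+$. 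Hence $\Im(1/R) < 0$ there, equivalently $\Im R > 0$, and $R$ is Herglotz.

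For analyticity on $\Omega$, the symmetry (a) forces $|s_\pm(\xi)| = 1$ on the gaps $\bbR\setminus\sfE$, so $s_\pm$ are analytic and nonvanishing there; the same symmetry shows $s_\pm$ cannot have zeros in $\bbC^-$ (such a zero would require $s_\pm$ to blow up at the conjugate point in $\bbC^+$, contradicting the Schur property), while their poles can only lie in $\bbC^-$. At such a pole $z_0 \in \bbC^-$ of $s_+$ or $s_-$, a leading-order expansion of \eqref{eq:resolventFunc} shows that both $(1-s_+)(1-s_-)$ and $1-s_+s_-$ acquire a pole of the same order with matching leading coefficients, so $R$ extends analytically through $z_0$; condition (c) handles the denominator on $\bbR\setminus\sfE$, giving analyticity throughout $\Omega$.

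Finally, on $\sfE$ the reflectionless condition (b), $\overline{s_+(\xi+i0)} = s_-(\xi+i0)$, yields $s_+s_- = |s_+|^2$ and $(1-s_+)(1-s_-) = |1-s_+|^2$ at $\xi + i 0$, both real, so $R(\xi+i0) \in i\bbR$ for a.e.\ $\xi \in \sfE$. Combined with the Herglotz positivity this puts $R(\xi + i 0)$ in $i[0,\infty)$, and off the measure-zero set where $|s_+(\xi+i0)| = 1$ or $1 - s_+(\xi + i 0) = 0$ one concludes $\arg R(\xi + i 0) = \pi/2$. I expect the analyticity step to be the main technical obstacle, since it requires the inventory described above of how zeros and poles of $s_+$ and $s_-$ are tied together by (a); the remaining conclusions are essentially algebraic consequences of (a)--(c).
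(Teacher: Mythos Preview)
The paper does not prove this lemma; it is stated with a citation to \cite{BLY2} and no argument is supplied. Your proposal therefore cannot be compared to anything in the paper, but on its own merits it is essentially correct.

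Two small points would tighten it. In the analyticity step you check that $1-s_+s_-\neq 0$ on $\bbC^+$ (where $|s_+s_-|<1$) and on the gaps (by condition (c)), but you do not explain why it is nonzero on $\bbC^-$ away from the poles of $s_\pm$; this follows from the same symmetry (a), which gives $|s_\pm(z)|=1/|s_\pm(\bar z)|>1$ for $z\in\bbC^-$, hence $|s_+s_-|>1$ there and the denominator cannot vanish. In the boundary step, the assertion that $\{\xi\in\sfE:|s_+(\xi+i0)|=1\}$ is a null set is not automatic for Schur functions in general (inner functions have unimodular boundary values a.e.); in the present situation it follows because at such a point with $s_+(\xi+i0)\neq 1$ your formula forces $|R(\xi+i0)|=\infty$, whereas a Herglotz function has finite nontangential boundary values almost everywhere. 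With these two remarks the argument is complete.
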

\begin{remark}
Let $m_{\pm}(z)$ be the Weyl--Titchmarsh $m$-functions. Then
Definition~\ref{def:reflectionless} converts to the Weyl--Titchmarsh form by the substitutions \begin{equation}\label{eq:mFunc}
m_\pm(z)
= i \frac{1+s_\pm(z)}{1-s_\pm(z)}\text{ and }~R(z)=\frac{2}{m_-(z)-m_+(z)}.
\end{equation}
 Condition (2) of Definition~\ref{def:reflectionless} is then equivalent to \begin{equation}\label{eq:reflectionlessMfunc}
m_+(\xi+i0)=\overline{m_-(\xi+i0)}.
\end{equation}
In particular, a zero of $R(z)$ corresponds to a pole of $m_{\pm}(z)$.
\end{remark}
Reflectionlessness in the absolutely continuous part of the spectrum is known to be true:

\begin{theorem}[\cite{BLY1}]\label{thm:reflecAc}
If the potential $\varphi$ is uniformly almost periodic,  then it is reflectionless on its a.c.\ spectrum, that is, $\overline{s_+(\xi+i 0)} = s_-(\xi+ i 0)$ for a.e.\ $\xi \in \Sigma_{\varphi,{\rm ac}}$.
\end{theorem}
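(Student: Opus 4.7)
My plan is to deduce the result from a Remling-type oracle theorem for Dirac operators, combined with the self-right-limit property of uniformly almost periodic functions. The Dirac analogue of Remling's theorem, which is the substantive content of \cite{BLY1}, asserts the following: for any potential $\varphi$ obeying \eqref{eq:unLocalIntegrable}, if $x_n \to +\infty$ and $\varphi(\cdot + x_n) \to \tilde\varphi$ in $L^2_\loc$, then the Schur functions $\tilde s_\pm$ associated to $\Lambda_{\tilde\varphi}$ satisfy the reflectionless identity $\overline{\tilde s_+(\xi + i 0)} = \tilde s_-(\xi + i 0)$ for a.e.\ $\xi \in \Sigma_{\varphi,{\rm ac}}$. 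The point is that the reflectionless set is determined by the \emph{original} a.c.\ spectrum, not that of $\tilde\varphi$.

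Granted this oracle input, the proof is short. By the definition of uniform almost periodicity, for each $n \in \bbN$ the set of $(1/n)$-almost periods of $\varphi$ is relatively dense in $\bbR$ and hence intersects $(n,\infty)$; picking any $\tau_n$ from such intersection yields $\tau_n \to +\infty$ together with $\|\varphi(\cdot + \tau_n) - \varphi\|_\infty \to 0$. Uniform convergence on $\bbR$ is strictly stronger than $L^2_\loc$ convergence, so $\varphi$ itself is a right limit of $\varphi$ along $\{\tau_n\}$. Applying the oracle theorem with $\tilde\varphi = \varphi$ then immediately gives $\overline{s_+(\xi + i 0)} = s_-(\xi + i 0)$ for a.e.\ $\xi \in \Sigma_{\varphi,{\rm ac}}$, which is the claim.

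The main effort—and the step I would expect to require the most care if one had to reprove it directly in the Dirac framework—is the oracle theorem itself. Its proof proceeds through the nested Weyl disks \eqref{eq.WeylDisk}: one shows that on the essential support of the a.c.\ spectrum the disks contract in a controlled, quantitative way (so that the boundary values of $s_\pm$ are determined by half-line data that is stable under taking right limits), and then exploits the $j$-inner property of the transfer matrices in Arov gauge \eqref{eq:ArovDirac} together with a subordinacy / Gilbert--Pearson-type characterization of $\Sigma_{\varphi,{\rm ac}}$ for the Dirac operator to transfer the reflectionless boundary symmetry from $s_\pm$ to $\tilde s_\pm$. The two-dimensional spinor structure and the $2 \times 2$ matrix formalism make the bookkeeping more involved than in the Schr\"odinger case, but the strategic outline is identical. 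Once this input is in place, the passage to the almost periodic conclusion in the present theorem is essentially a one-line observation.
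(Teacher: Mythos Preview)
The paper does not give its own proof of this theorem; it is simply quoted as a result from \cite{BLY1}. Your proposal is correct and is exactly the standard route: invoke the Dirac oracle theorem from \cite{BLY1} (right limits are reflectionless on $\Sigma_{\varphi,{\rm ac}}$) and observe that a uniformly almost periodic $\varphi$ is its own right limit along a sequence of almost periods tending to $+\infty$.
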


\subsection{The Abel Map}

We now describe the explicit construction of $\mathcal{D}(\sfE)$ and the map from $\mathcal{S}_A(\sfE)$ to $\mathcal{D}(\sfE)$. 
\begin{definition}\label{def:divisor} 
Let $\sfE$ be as above, define $\mathcal{D}(\sfE)$  by
\begin{equation}\label{eq:divisor}
\mathcal{D}(\sfE)
=\prod_{j\in\bbN}\mathcal{I}_j,
\end{equation}
where $\mathcal{I}_j$ is a circle corresponding to gluing two copies of the $j$th gap together at the endpoints, that is,
$$
\mathcal{I}_j
=([a_j,b_j] \times \{\pm\}) / \sim,$$
where the equivalence relation is given by $(a_j,+) \sim (a_j,-)$ and $(b_j,+) \sim (b_j,-)$.
\end{definition}
Let  $\mathcal{B}:\mathcal{S}_A(\sfE)\to\mathcal{D}(\sfE)$ be defined as follows.
Since $R$ is strictly increasing in each gap $(a_j,b_j)$ \cite[Lemma 3.9]{BLY2}, there exists at most one $\mu_j\in(a_j,b_j)$ such that $R(\mu_j)=0$. 
Suppose first that $R(\mu_j)=0$ for some $\mu_j \in (a_j,b_j)$.
Since $1-s_+(z)s_-(z)\neq 0$, this implies  either $s_+(\mu_j)=1$ or $s_-(\mu_j)=1$,
so in this case, $(\mu_j,\epsilon_j)$ is determined by $s_{\epsilon_j}(\mu_j)=1$. 
If $R$ never vanishes in $(a_j,b_j)$ then either $R(z)>0$ on $(a_j,b_j)$, in which case we put $\mu_j=a_j$, or $R(z)<0$ on $(a_j,b_j)$, in which case we put $\mu_j=b_j$.
The above construction then defines the map $\mathcal{S}_A(\sfE)\to \mathcal{D}(\sfE)$:
\begin{equation}\label{eq:traceFormula1}\mathcal{B}: s_+ \to \mathcal{B}(s_+)=D:=\{(\mu_j,\epsilon_j):j\in\bbN\}.
\end{equation}

The map $\mathcal{B}$ can also be described through the product formula for $R$.
With $\mu_j$ as in the previous paragraph, define the function $\chi$ by 
\begin{equation}\label{eq:divisor1}
\chi(\xi)=\left\{\begin{aligned}
&1/2,&& \text{if } \xi \in(a_j,\mu_j) \text{for some j}, \\
&-1/2,&& \text{if } \xi \in(\mu_j,b_j) \text{for some j}, \\
&0,&& \text{if }\xi \in\sfE.
\end{aligned}
\right.
\end{equation}
In this case, the resolvent function has the Herglotz representation on $\bbC_+$:
\begin{equation*}
R(z) 
= i |R(i)|e^{\int \! \left(\frac{1}{\xi-z}-\frac{\xi}{1+\xi^2}\right)\chi(\xi) \, d\xi}.
\end{equation*}
Under the finite-gap length condition, both terms in the integrand are separately integrable. Together with the normalization $R(z) \to i$ as $z\to i\infty$, this simplifies to
\[
R(z) = i e^{\int\! \frac 1{\xi - z} \chi(\xi) \, d\xi},
\]
which yields the product representation
\begin{equation}\label{eq:resolventProd}
R(z)
= i \prod_j\sqrt{\frac{(z-\mu_j)^2}{(z-a_j)(z-b_j)}}.
\end{equation}

We next recall the setting from \cite{BLY2} for constructing the generalized Abel map.
Fix a spectral gap $(a_0,b_0)$ and pick $\xi^*\in(a_0,b_0)$ for the purpose of normalization.
 Let $\pi_1(\Omega)$ be the fundamental group of the domain $\Omega$ and $\T:=\bbR/\bbZ.$ We write $\pi_1(\Omega)^*$ for the character group, that is, the set of $\alpha:\pi_1(\Omega)\to\T$ satisfying $\alpha(\gamma_1\circ\gamma_2)=\alpha(\gamma_2)\alpha(\gamma_1)$ for $\gamma_1,\gamma_2\in\pi_1(\Omega)$.

\begin{definition}\label{def:WidomDomain}
A domain $\Omega$ is called a {\it Parreau--Widom surface} (or simply {\it PWS}) if, for some (equivalently, for all) $z_0\in\Omega$, one has:
\begin{equation}\label{eq:WidomCond}
\sum_{c_j:\nabla G(c_j,z_0)=0}G(c_j,z_0)<\infty.
\end{equation}
 \end{definition}
This class of surfaces was introduced by M.\ Parreau in 1958 and later, possibly independently and from a different perspective, by H.\ Widom in 1971. 
There are some other equivalent definitions, e.g.
$$\int_{0}^\infty B(\alpha,z_0)d\alpha<\infty,$$
where $B(\alpha,z_0)$ is the {\it first Betti number} of the connected domain $$\Omega(\alpha,z_0):=\{z\in\Omega:G(z,z_0)>\alpha\}$$
for each $\alpha>0.$

A fundamental result of Widom \cite{Widom1971Acta} says that the following are equivalent:
\begin{enumerate}
    \item $\Omega$ is a PWS
    \item $\mathcal{H}^\infty(\Omega,\alpha)\neq\{0\}$ for any $\alpha\in \pi_1(\Omega)^*$
    \item $\mathcal{H}^1(\Omega,\alpha)\neq\{0\}$ for any $\alpha\in \pi_1(\Omega)^*$
\end{enumerate}
where the character automorphic Hardy space $\mathcal{H}^p(\Omega,\alpha)=\{f\in \mathcal{H}^p(\Omega): f(\gamma(z))=e^{2\pi i\alpha(\gamma)}f(z)\}$.
For a detailed proof, we refer the reader to \cite[Theorem 2B of Chapter 5]{Hasumi1983}.

 Let $\sfE$ have positive capacity and  $\Omega=\bbC\setminus \sfE$  be Dirichlet regular. For $z_0\in \Omega$, let $\widetilde{G}(z,z_0)$ denote the harmonic conjugate of $G(z,z_0)$, and define the complex Green's function by
 $$
 \Phi_{z_0}(z)=e^{-G(z,z_0)-i\widetilde{G}(z,z_0)}.
 $$
 Its lift to $\bbD$ is a Blaschke product with zeros at $\{\Lambda^{-1}(z_0)\}$. 
 Since $\widetilde{G}(z,z_0)$ is multi-valued, $\Phi_{z_0}(z)$ is also multi-valued, but its modulus is single-valued, with 
 $$|\Phi_{z_0}(z)|=e^{-G(z,z_0)}.$$
 We also write $\Phi(z)=\Phi(z,i)$ for the complex Green's function with a simple zero at $i$.
 
Fix a point $\xi_* \in \bbC\setminus \sfE$ and a uniformization $\Lambda:\bbD\to\Omega$ such that $\Lambda(0)=\xi_*$. Let $\Gamma$ be the corresponding Fuchsian group acting on $\bbD$ by M\"obius transformations. Then $\Gamma \cong \pi_1(\Omega)$, and $\Lambda (z_1)=\Lambda (z_2)$ if and only if $z_2=\gamma(z_1)$ for some $\gamma \in \Gamma$.

A multi-valued function $f$ on $\Omega$ with single-valued modulus $|f|$ is lifted to a single-valued function $F$ on $\bbD$ such that 
 $F=f\circ\Lambda$. 
 There exists a homomorphism $\alpha \in \pi_1(\Omega)^*$ such that
 $$
 F \circ\gamma
 =e^{2\pi i \alpha(\gamma)}F
 \quad \forall \gamma \in \Gamma.
 $$
 We call $f$  {\it character-automorphic} with an associated character $\alpha$.
Denoting by $f\circ \gamma$ the analytic continuation of the multiplicative function $f$ along the element $\gamma$ of $\pi_1(\Omega)$, this corresponds to 
 $$f \circ \gamma 
 = e^{2\pi i \alpha(\gamma)}f \quad   \forall \gamma\in\pi_1(\Omega).$$
 In this case, we also call $f$ \emph{character-automorphic}.

 For every $\alpha\in\pi_1(\Omega)^*$, the {\it character-automorphic Hardy space} $\mathcal{H}^p(\Omega,\alpha) $ is the set of character-automorphic functions $f$ with character $\alpha$ whose lift $F = f\circ\Lambda$ belongs to $H^p(\bbD)$.

 For a Widom domain $\Omega$, the space $\mathcal{H}^2(\alpha)=\mathcal{H}^2(\Omega,\alpha)$ is a non-trivial reproducing kernel Hilbert space \cite{Widom1971Acta}. 
For every $z_0\in\Omega$, there exists $k^\alpha_{z_0}(z)\in\mathcal{H}^2(\alpha)$ such that for any $f\in \mathcal{H}^2(\alpha)$,
 $$\langle f,k^\alpha_{z_0}\rangle=f(z_0).$$
That is, $k^\alpha_{z_0}(z)$ is the reproducing kernel corresponding to the point-evaluation functional $z_0$. 

The \emph{Smirnov class} $\mathcal{N}_+(\Omega)$ consists of analytic functions of the form $f/g$ where $f$ and $g$ are bounded analytic functions and $g$ is outer. Let $M$ denote the symmetric Martin function at the infinity of $\Omega$, and let $\Theta$ be an analytic function such that $\Im\Theta(z)=M(z)$. Denote by $\{c_j\in (a_j,b_j):j\in\bbN\}$ the set of critical points of $M$, and define the Widom function by $\mathcal{W}=\prod_j\Phi_{c_j}(z),$ which has an associated character $\beta(\mathcal{W})$. For any $f\in \mathcal{H}^1(\Omega,\beta(\mathcal{W}))$, the differential $\frac{f}{\mathcal{W}}d\Theta$ is a single-valued in $\Omega$, and moreover $\frac{f\Theta'}{\mathcal{W}}\in\mathcal{N}_+(\Omega)$.
\begin{definition}[\cite{BLY2}]\label{def:DCT}
We say that $\Omega$ satisfies the Direct Cauchy Theorem (DCT) if, for every $f\in \mathcal{H}^{1}(\Omega,\beta(\mathcal{W}))$,
\[
\oint_{\partial\Omega}\frac{f(z)}{\mathcal{W}(z)}d\Theta(z)=\oint_{\partial\Omega}\frac{f(z)\Theta'(z)}{\mathcal{W}(z)}dz=0.
\]
\end{definition}
There are various definitions of DCT, compare \cite{Hasumi1983, SY97, VolbergYuditskii2014, Yuditskii2011DCTFails,Yuditskii2020}. We are using the one that is consistent with our setting. 
 It is known that if $\Omega$ obeys DCT, then $k^\alpha_{z_0}(z_0)$ is a continuous function of $\alpha\in\pi_1(\Omega)^*$.
Define the $L^2$-normalized reproducing kernel at $z_0=i $ by $K^\alpha(z)=k^\alpha_{i }(z)/\sqrt{k^\alpha_{i }(i )}$.
Recall that $\sharp$ denotes the anti-linear involution \eqref{eq:involution} and that $\Phi(z)$ is the complex Green's function of $\Omega$ normalized at $i $. Let $\{c_j:j\in\bbN\}$ be the set of critical points of the Green's function.
Given $\{(\mu_j,\epsilon_j):j\in\bbN\}\in \mathcal{D}(\sfE)$, the {\it canonical product} 
\begin{equation}\label{eq.prodRepRepKer}
\mathscr{K}_D(z)
=C\left\{\frac{\Phi(z)}{z-i } \frac{\Phi_{\sharp}(z)}{z+i } \prod_j \frac{z-\mu_j}{\sqrt{1+\mu_j^2}\Phi_{\mu_j}(z)} \frac{\sqrt{1+c_j^2} \Phi_{c_j}(z)}{z-c_j}\right\}^{\frac{1}{2}}\prod_j\Phi_{\mu_j}(z)^{\frac{1+\epsilon_j}{2}}
\end{equation}
arises from the decomposition \begin{equation}\label{eq:ResolventDecomp}R^{\alpha,\tau}(z)=i\frac{(1-\tau s_+^\alpha)(1-\bar{\tau}s_-^\alpha)}{1-s_+^\alpha s_-^\alpha}=\frac{i}{2}\mathscr{K}\mathscr{K}_*(z-i)(z+i)\Theta',\end{equation}
where $R^{\alpha,\tau}(z)$ is the diagonal Green's function \eqref{eq:resolventFunc} and $\Theta(z)=\widetilde{M}(z)+iM(z)$ is the complex Martin function. It is well defined if $\Omega$ is of Parreau--Widom type and satisfies the {\it finite gap condition}:
 \begin{equation}\label{eq:finiteGapCond}\sum_j(b_j-a_j)=\sum_j\gamma_j<\infty.\end{equation}
Since $\Omega$ is multi-connected, the Martin function $\Theta$ is multi-valued and satisfies 
\begin{equation}\label{eq:charOfCompCMartin}
\Theta\circ\gamma(z)=\Theta(z)+2\pi\eta(\gamma), \gamma\in \Gamma=\pi_1(\Omega)^*,
\end{equation}
where $\eta:\Gamma\to\bbR$ is an additive character.
It is shown in \cite[Lemma 3.12]{BLY2} that $\mathscr{K}_D$ is orthogonal to $\Phi\Phi_{\sharp}\mathcal{H}^2(\alpha-2\beta_{\Phi})$ and therefore can be written as a linear combination $$\mathscr{K}^1_D=C_1K^\alpha-C_2K^\alpha_\sharp,$$
where by $\mathscr{K}_D^C$ we make the dependence of $C$ in \eqref{eq.prodRepRepKer} explicit. In particular, denote $\mathscr{K}_D=\mathscr{K}_D^{1}$. Note that $(\mathscr{K}_D)_\sharp$ is a multiple of $\mathscr{K}_D$, it follows that $|C_1|=|C_2|=1.$ Then it follows that 
\begin{equation}\label{eq.reasonOfRotation}
\mathscr{K}_D^{\overline{C_1}}=\overline{C_1}\mathscr{K}_D=K^\alpha-\tau K^\alpha_\sharp,
\end{equation}
where $\tau=\overline{C_1}C_2.$ Intuitively $\tau$ stands for a modulation of arguments between $K^\alpha$ and $K^\alpha_\sharp$. Now, let $\alpha=\alpha(\mathscr{K}_D)$ be the character of this product.

From the above procedure we obtain the generalized Abel map
$\mathcal{A}:\mathcal{D}\to \pi^*_1(\Omega)\times \T,$
\begin{equation}
\label{eq:abelMap01}
D\mapsto (\alpha(\mathscr{K}_D),\tau).
\end{equation} 
To give the explicit formula of this map, consider a set of generators of $\pi_1(\Omega)$. Let $l_k$ be a simple loop that intersects $\bbR\setminus \sfE$ through $(a_0,b_0)$ at $\xi^*$ ``upward" and $(a_k,b_k)$ ``downward".
Then $\{l_k\}$ is a generator of the fundamental group $\pi_1(\Omega)$.
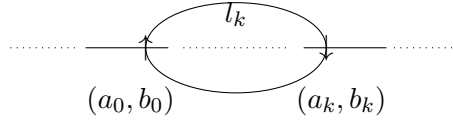
\begin{figure}
\begin{tikzpicture}
\draw[dotted] (-3,0) -- (-2.1,0);
 \draw[dotted] (2.1,0) -- (3,0);
 \draw (-2,0) -- (-0.9,0);
 \draw[dotted] (-0.7,0) -- (0.7,0);
 \draw (0.9,0)--(2,0);
 \draw (0,0) ellipse (1.2cm and 0.6cm);
\node at (-1.4,-0.7) {($a_0,b_0$)};
\node at (1.4,-0.7) {($a_k,b_k$)};
 \node at (0,0.45) {$l_k$};
 \node at (-1.2,0) {$\uparrow$};
 \node at (1.2,0) {$\downarrow$};

 \end{tikzpicture}
 \caption{A loop in $\pi_1(\Omega)$}
 \end{figure}

Let $\sfE_k$ be the subset of $\sfE$ that is inside $l_k$ and $\omega(z,F)$ is the harmonic measure of a Borel set $F\subseteq \sfE.$ We define
\begin{equation}\label{eq:Abel1}
\mathcal{A}_c[D](l_k)=\sum_j\frac{\epsilon_j}{2}\int_{a_j}^{\mu_j}\omega(d t,\sfE_k) \mod \bbZ.
\end{equation}

\begin{theorem}[\cite{BLY2}]\label{thm:JacobiInver}
If $\Omega=\bbC\setminus \sfE$ is a regular Widom domain that obeys DCT, then
the Abel map $\mathcal{A}:\mathcal{D}(\sfE)\to\pi^*_1(\Omega)\times \T$ is a homeomorphism and can be given explicitly 
\begin{equation}\label{eq:Abel2}
\alpha(l_k)=\beta_\Phi(l_k)+\mathcal{A}_{c}[D](l_k)-\mathcal{A}_{c}[D_c](l_k),
\end{equation}
where $\beta_\Phi$ is the character of the complex Green's function $\Phi$ and $D_c=\{(c_j,-1):j\in\bbN\}$. 
The rotation coordinate is obtained by \begin{equation}\label{eq:defRotationFlow}\tau=-\bar{\tau}_*^2\frac{\mathscr{K}_{D_*}(i)}{\mathscr{K}_{D_*}(-i)},\end{equation} where $D_*=\{(\mu_j,-\epsilon_j):j\in\bbN\}$ and $\tau^*$ is a unimodular constant\footnote{The explicit formula for $\tau_*$ is given by the following:
$$\tau_*=\exp\left(-i\arg\left[\frac{\Theta'(i)}{\mathcal{W}(i)}\frac{i}{\Phi'(i)}\right]\right)$$} that is independent of $D$.
\end{theorem}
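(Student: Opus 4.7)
The plan is to verify both the character formula and the rotation formula by direct computation from the canonical product \eqref{eq.prodRepRepKer}, and then to upgrade the resulting continuous bijection to a homeomorphism using compactness of $\mathcal{D}(\sfE)$ together with DCT.

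For \eqref{eq:Abel2}, I would track how each factor of $\mathscr{K}_D$ transforms under analytic continuation along the generator $l_k$ of $\pi_1(\Omega)$. The rational factors $z-i$, $z+i$, $z-\mu_j$, $z-c_j$ are single-valued and contribute nothing. Each complex Green's function $\Phi_\zeta$ has single-valued modulus, so continuation around $l_k$ produces a multiplier $e^{-2\pi i\omega(\zeta,\sfE_k)}$, i.e.\ its character is $-\omega(\zeta,\sfE_k)\bmod\bbZ$. Collecting the contributions, with the halvings dictated by the square root in \eqref{eq.prodRepRepKer} and the shifts dictated by the exterior prefactor $\prod_j \Phi_{\mu_j}^{(1+\epsilon_j)/2}$: the $i$ and $-i$ factors collapse to $\beta_\Phi(l_k)$ (using $\beta_{\Phi_\sharp}=-\beta_\Phi$, which follows from the reflection symmetry $\sfE\subset\bbR$); the $\mu_j$ contributions reorganize into $\sum_j\frac{\epsilon_j}{2}\omega(\mu_j,\sfE_k)$, which via $\omega(\mu_j,\sfE_k)=\int_{a_j}^{\mu_j}\omega(dt,\sfE_k)$ reproduces $\mathcal{A}_c[D](l_k)$ as in \eqref{eq:Abel1}; and the $c_j$ contributions combine into $-\mathcal{A}_c[D_c](l_k)$.

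For \eqref{eq:defRotationFlow}, I would exploit the decomposition $\mathscr{K}_D=K^\alpha-\tau K^\alpha_\sharp$ from \eqref{eq.reasonOfRotation}. Because $K^\alpha_\sharp(z)=\overline{K^\alpha(\bar z)}$ and $K^\alpha(i)>0$, evaluation at $z=\pm i$ expresses $\tau$ as a M\"obius ratio in $\mathscr{K}_D(i)/\mathscr{K}_D(-i)$. To recast this in the form involving the shifted divisor $D_*=\{(\mu_j,-\epsilon_j):j\in\bbN\}$, observe that swapping $\epsilon_j\mapsto-\epsilon_j$ in \eqref{eq.prodRepRepKer} inverts the factor $\Phi_{\mu_j}^{\epsilon_j}$, so that $\mathscr{K}_D\cdot\mathscr{K}_{D_*}$ reduces to a single-valued rational expression whose values at $\pm i$ are conjugate; this rational factor drops out of the ratio in \eqref{eq:defRotationFlow}, while the unimodular prefactor $\tau_*$ absorbs the global argument in the square root of \eqref{eq.prodRepRepKer}, depending only on $\Theta'(i)/\mathcal{W}(i)$ and $\Phi'(i)$.

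For the homeomorphism claim, $\mathcal{D}(\sfE)$ is compact as an infinite product of circles and $\pi_1^*(\Omega)\times\T$ is Hausdorff, so it suffices to establish continuity and bijectivity. Continuity is immediate from \eqref{eq:Abel2}, with absolute convergence of the series guaranteed by the finite gap length condition \eqref{eq:finiteGapCond} (the harmonic measures $\omega(\mu_j,\sfE_k)$ are controlled by the gap sizes). The main obstacle is bijectivity, and this is precisely where DCT is essential: under DCT, $\alpha\mapsto K^\alpha$ is continuous and $K^\alpha-\tau K^\alpha_\sharp$ is a nonzero character-automorphic function with single-valued modulus, whose zeros lie in the closed gaps $[a_j,b_j]$ and thereby define a unique divisor $D$ with $\mathcal{A}(D)=(\alpha,\tau)$. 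The hardest step is verifying that these zeros exhaust exactly one point per gap — a statement equivalent to DCT in this setting, and precisely the mechanism whose failure is exploited in the proof of Theorem~\ref{thm:mainThm}(2).
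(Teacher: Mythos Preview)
The paper does not supply its own proof of this theorem: it is stated with the citation \cite{BLY2} and used as a black box in the subsequent development, so there is no in-paper argument to compare your proposal against. Your outline is a plausible sketch of how such a result is typically established (tracking characters factor by factor in the canonical product, extracting $\tau$ from the $\sharp$-symmetry, and invoking compact--Hausdorff to promote a continuous bijection), but a substantive comparison would require consulting the proof in \cite{BLY2} directly.
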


Compared to the Schr\"odinger case, the Dirac Abel map carries an extra component, the rotation coordinate $\tau=\mathcal{A}_r.$ Therefore, the Abel map in our context is $\mathcal{A}=(\mathcal{A}_c,\mathcal{A}_r)$ which maps a divisor $D\in\mathcal{D}(\sfE)$ to a pair $(\alpha,\tau)\in\pi_{1}(\Omega)^*\times\T.$ The extra component $\mathcal{A}_r$ will not play an essential role in this work since an appropriate gauge can make $\tau$ independent of $x$ \cite{BLY2}. 

A remarkable property of the generalized Abel map is that it conjugates the translation flow in the potential to a linear flow. A proof for finite gap spectrum can be found in \cite[Section 3]{GH03}. For the infinite gap case, we refer to \cite[Section 6]{FLLZ25} and remark that the linearization property does not depend on DCT.
\begin{theorem}[\cite{FLLZ25}]\label{thm:linAbel}
Assume that $\sfE$ satisfies the finite gap length condition and $\Omega$ is a regular PWS. Then
    there exists $\eta\in\bbR^\infty$ such that 
    $$\mathcal{A}\circ\mathcal{B}(T_\ell\varphi)=(\alpha+\ell\eta,\tau).$$
\end{theorem}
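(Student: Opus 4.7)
The plan is to reduce the statement to the classical Dubrovin--Jacobi inversion theory for the finite-gap case, and then extend it to infinite gaps by approximation, exploiting the finite gap length condition \eqref{eq:finiteGapCond} and the Parreau--Widom property \eqref{eq:WidomCond}. The guiding principle is that the translation flow acts on the divisor coordinates $\{(\mu_j,\epsilon_j)\}$ by a nonlinear Dubrovin-type ODE system, but the harmonic-measure integrals entering the Abel map transform this flow, by a telescoping sum identity, into a linear flow on $\pi_1(\Omega)^*$.

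First I would derive the Dubrovin dynamics for the divisor. Differentiating the product representation \eqref{eq:resolventProd} with respect to $\ell$ and using that the canonical system \eqref{eq:ArovDiff} gives $\partial_\ell \log s_\pm$ in closed form, one obtains an explicit ODE
\[
\frac{d\mu_j}{d\ell} \;=\; \epsilon_j \, F_j(\{\mu_k\}),
\]
where $F_j$ is expressed through the product formula for $R$ evaluated at $\mu_j$; the sign $\epsilon_j$ flips exactly when $\mu_j$ hits an endpoint of $(a_j,b_j)$, in harmony with the circle structure of $\mathcal{I}_j$. Next, I would differentiate the coordinate part \eqref{eq:Abel2}--\eqref{eq:Abel1} of the Abel map along the translation flow:
\[
\frac{d}{d\ell}\mathcal{A}_c[\mathcal{B}(T_\ell\varphi)](l_k)
 = \sum_j \frac{\epsilon_j}{2}\, \frac{d\mu_j}{d\ell}\, \frac{d\omega(\,\cdot\,,\sfE_k)}{dt}\Big|_{t=\mu_j}.
\]
Substituting the Dubrovin dynamics and using the classical summation identity for harmonic-measure differentials (the Dirac analogue of the trace formula manipulation in \cite[Sect.~3]{GH03}), the right-hand side collapses to a constant $\eta(l_k)$ that is independent of the divisor $D$. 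This yields the element $\eta\in\bbR^\infty$ in the finite-gap case.

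For the rotation coordinate $\tau$ I would invoke the gauge freedom noted after Theorem~\ref{thm:JacobiInver}: the definition \eqref{eq:defRotationFlow} involves the unimodular constant $\tau_*$ which can be absorbed into the choice of Arov gauge for the fundamental solution. Under this gauge $\tau$ is independent of $\ell$, so $\mathcal{A}_r\circ\mathcal{B}(T_\ell\varphi)=\tau$, which is the second component of the claim. The first component then reads exactly $\alpha+\ell\eta$.

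Finally, I would pass to the infinite-gap case by truncation, approximating $\sfE$ by finite-gap spectra $\sfE^{(N)}$ obtained by closing all gaps with index $>N$. I expect the main obstacle to lie here: one must verify that both the Dubrovin system and the Abel map behave continuously under this limit, which amounts to proving termwise summability of $|d\mu_j/d\ell|$ and justifying termwise differentiation of the infinite product \eqref{eq.prodRepRepKer}. The Widom condition controls the critical points $c_j$ appearing in $\mathscr{K}_D$, while the finite gap length condition \eqref{eq:finiteGapCond} gives absolute convergence of $\sum_j(b_j-a_j)$, which is exactly what is needed to dominate the sums appearing in the harmonic-measure identity. Notably, this step does not invoke DCT, consistent with the paper's remark that the linearization property is DCT-independent. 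Combining these estimates, $\eta^{(N)}(l_k)\to\eta(l_k)$ for each $k$, yielding $\eta\in\bbR^\infty$ and completing the proof.
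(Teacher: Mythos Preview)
The paper does not supply its own proof of this theorem; it is quoted from \cite{FLLZ25}, with the surrounding text pointing to \cite[Section~3]{GH03} for the finite-gap case (Dubrovin dynamics plus the Abel--Jacobi linearization) and to \cite[Section~6]{FLLZ25} for the extension to infinitely many gaps. Your plan---derive the Dubrovin ODE for the $\mu_j$, differentiate the Abel map \eqref{eq:Abel1}--\eqref{eq:Abel2}, collapse the sum via the harmonic-measure identity, then pass to the limit using the Widom and finite-gap-length conditions---is exactly the route those references follow, so there is nothing to compare against beyond the citations themselves.

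One small imprecision: your handling of the rotation coordinate $\tau$ is not quite right. The constant $\tau_*$ in \eqref{eq:defRotationFlow} is already $D$-independent, so ``absorbing'' it into the gauge does not by itself make $\tau$ constant in $\ell$; the nontrivial content is that the ratio $\mathscr{K}_{D_*}(i)/\mathscr{K}_{D_*}(-i)$ is $\ell$-independent once the Arov gauge is fixed appropriately. The paper attributes this to \cite{BLY2} in the paragraph following the theorem, and you should cite that rather than sketch an argument about $\tau_*$.
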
The vector $\eta\in\bbR^\infty$ is called the {\it harmonic frequencies} of the set $\sfE$. The extra condition on the gap length can likely be removed by the discussions in \cite{DamanikYuditskii2016Adv}, which connected a Schr\"odinger operator to a Jacobi model and proved the linear translation on characters of the generalized Abel map via functional analysis.
\begin{definition}\label{def:genericPos}
    We say $\sfE$ is in {\it generic position} if the sequence $\eta=(\eta_j)$ is rationally independent; that is, whenever $\sum_j c_j\eta_j=0\mod\bbZ$ with integers $c_j$, it follows that $c_j=0$ for all $j$.
\end{definition}
The genericity condition implies that $\alpha+\eta\ell$ defines a minimal and uniquely ergodic flow on $\pi_1(\Omega)^*$ for $\ell\in\bbR$. 

The Abel map $\mathcal{A}:\mathcal{D}(\mathsf{E})\to\pi_1(\Omega)^*\times\T$ is a well-defined surjective map under the Widom condition. Moreover, if DCT holds on $\Omega$, then $\mathcal{A}$ is injective and becomes a homeomorphism \cite{BLY2}.
However, when DCT fails in $\Omega$, $\mathcal{A}$ may not be injective at some {\it irregular} $\alpha\in \pi_1(\Omega)^*$. This motivates the following definition.
 For some (hence for all) $\tau\in\partial\mathbb{D}$, the set of irregular characters is defined by
   $$ \Theta=\{\alpha\in\pi_1(\Omega)^*: \mathcal{A}^{-1}(\alpha,\tau)\text{ is not unique}\}.$$
   That is, for any $\alpha\in \Theta,$ there exist distinct $D_1,D_2\in\mathcal{D}(\mathsf{E})$ such that $\mathcal{A}(D_1)=\mathcal{A}(D_2)=(\alpha,\tau).$

The following result is a general fact of Widom domains.
\begin{theorem}[\cite{VolbergYuditskii2014}]
Assume that $\Omega=\bbC\setminus\sfE$ is a PWS. Then 
$|\Theta|_{d\alpha}=0$, where $|\cdot|_{d\alpha}$ denotes the Haar measure on $\pi_1(\Omega)^*.$
\end{theorem}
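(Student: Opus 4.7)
My plan is to translate the statement into a uniqueness result for extremal functions in the character-automorphic Hardy spaces $\mathcal{H}^2(\alpha)$, and then apply classical regularity theory on Parreau--Widom surfaces. Fix a rotation $\tau \in \T$. Every $D \in \mathcal{A}^{-1}(\alpha,\tau)$ produces a canonical product $\mathscr{K}_D$ which, after the unimodular normalization in \eqref{eq.reasonOfRotation}, decomposes as $K^\alpha - \tau K^\alpha_\sharp$. Two distinct divisors $D_1 \neq D_2$ yielding the same image under $\mathcal{A}$ would therefore give two canonical products in the two-dimensional span of $K^\alpha$ and $K^\alpha_\sharp$ with matching rotation, and the ratio $\mathscr{K}_{D_1}/\mathscr{K}_{D_2}$ would be a non-constant character-automorphic inner function with trivial character. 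Equivalently, the extremal unit vector in $\mathcal{H}^2(\alpha)$ selected by the point-evaluation functional at $i$ is not unique, which means $\mathcal{H}^2(\alpha)$ admits a nontrivial singular inner factor on the universal cover that the divisor data cannot see. This identifies $\Theta$ with the set of characters at which such a singular inner factor occurs.

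Next I would invoke the Widom--Hasumi dichotomy for PWS: for Haar-a.e.\ $\alpha \in \pi_1(\Omega)^*$, the space $\mathcal{H}^2(\alpha)$ has no nontrivial singular inner factor, so the extremal vector is unique up to phase. The proof rests on the regularity of the function $\alpha \mapsto \log k^\alpha_i(i)$: one shows it is lower semicontinuous on $\pi_1(\Omega)^*$ and satisfies an averaged Jensen-type identity driven by the Blaschke condition \eqref{eq:WidomCond}. Equality in that identity is exactly the condition that the singular inner part of the extremal function is trivial, and since the corresponding "defect" is a nonnegative measurable function whose integral against $d\alpha$ is zero, the defect vanishes for Haar-a.e.\ $\alpha$. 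Combined with the characterization above, this yields $|\Theta|_{d\alpha}=0$.

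The main obstacle will be establishing the averaged Jensen identity without invoking DCT, since the whole interest of the theorem lies in the regime where DCT may fail. Following Volberg--Yuditskii, one uses the Martin function $\Theta(z)$ together with the convergent critical sum $\sum_j G(c_j,z_0) < \infty$ from \eqref{eq:WidomCond} to construct an explicit test function whose composition with the generalized Abel map has the correct averaging behavior on $\pi_1(\Omega)^*$. The rotation coordinate $\tau$ must be tracked through this argument, but it does not influence the final measure estimate, consistent with the gauge freedom for $\tau$ recorded after Theorem~\ref{thm:JacobiInver}.
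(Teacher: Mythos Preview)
The paper does not prove this theorem; it is quoted verbatim from \cite{VolbergYuditskii2014} as a known input and no argument is supplied. So there is no ``paper's own proof'' to compare your proposal against.

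That said, your outline is broadly in the spirit of the Volberg--Yuditskii argument: one does identify the irregular characters with those $\alpha$ for which the extremal (reproducing-kernel) element of $\mathcal{H}^2(\alpha)$ acquires a nontrivial inner factor, and one does show this set is Haar-null by an integrated Jensen-type inequality over $\pi_1(\Omega)^*$ which becomes an equality under the Widom condition \eqref{eq:WidomCond}. Two technical points to tighten: first, the map $\alpha\mapsto k^\alpha_i(i)$ is \emph{upper} semicontinuous (it is a supremum of evaluations over a unit ball), not lower; the argument uses this together with integrability to force the defect to vanish a.e. Second, your passage from ``$D_1\neq D_2$ with $\mathcal{A}(D_1)=\mathcal{A}(D_2)$'' to ``nontrivial singular inner factor'' via the ratio $\mathscr{K}_{D_1}/\mathscr{K}_{D_2}$ is the right heuristic, but as written it only shows the ratio is a nonconstant function of trivial character; establishing that this forces a genuine \emph{singular} inner component (rather than merely a Blaschke discrepancy already visible in the divisor) requires the structure theory of $\mathcal{H}^2(\alpha)$ on PWS and is where the real work in \cite{VolbergYuditskii2014} lies.
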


\subsection{Ergodic Dirac Operators}
Let $T_\ell(\varphi)(x)=\varphi(x-\ell)$ denote translation by $\ell$. Suppose $(\mathcal{R},T_\ell,\nu)$ is an ergodic measure-preserving system, where $\nu$ is a $T_\ell$-invariant probability measure for all $\ell\in\bbR$. For $\varphi\in\mathcal{R}$, we define $$\omega(\varphi)=\{\varphi':\text{ There exists $\ell_j\to\infty$ such that $T_{\ell_j}\varphi\to \varphi'$}\}.$$
The following result combines work of Remling \cite{Remling2011Annals,Remling2015CMP}.  
\begin{theorem}
   Let $\Sigma_{ac}$ be the essential support of the absolutely continuous spectrum of $\Lambda_\varphi,$ then
    $$\omega(\varphi)\subset\mathcal{R}(\Sigma_{ac}).$$
\end{theorem}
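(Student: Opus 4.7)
The plan is to deduce the statement by combining two ingredients: Remling's oracle theorem (in its Dirac adaptation) and Kotani's deterministic theorem for ergodic families. Remling's insight, established for Jacobi matrices in \cite{Remling2011Annals} and continuum Schr\"odinger operators in \cite{Remling2015CMP}, is that every right-limit under translation is reflectionless on the essential support of the a.c.\ spectrum of the original operator. This matches precisely the requirement to place $\omega(\varphi)$ inside $\mathcal{R}(\Sigma_{ac})$, once the defining properties of $\mathcal{R}(\Sigma_{ac})$ — reflectionlessness on $\Sigma_{ac}$, spectrum equal to $\Sigma_{ac}$, purely absolutely continuous type — have been transferred to the Dirac framework.

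First I would verify continuity of the Schur functions under translation: if $T_{\ell_j}\varphi \to \varphi'$ in the strong resolvent topology on $\mathcal{R}$, then by \eqref{eq:resolventFunc} and the Herglotz representation of $R$, the half-line Schur functions $s_{\pm}^{T_{\ell_j}\varphi}$ converge locally uniformly on $\bbC\setminus\bbR$ to $s_{\pm}^{\varphi'}$. Next I would apply the Dirac analogue of Remling's oracle theorem to conclude that for any $\varphi' \in \omega(\varphi)$, the reflectionless identity $\overline{s_+^{\varphi'}(\xi+i0)} = s_-^{\varphi'}(\xi+i0)$ holds for a.e.\ $\xi \in \Sigma_{ac}$. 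The mechanism is Kotani's identity, whereby vanishing of the Lyapunov exponent on $\Sigma_{ac}$ forces asymptotic coincidence of the reflected half-line Weyl disks, and the reflectionless condition is preserved under locally uniform limits by the boundary value analysis of Schur functions. Finally, to identify $\sigma(\Lambda_{\varphi'}) = \Sigma_{ac}$ with purely a.c.\ type, combine the reflectionless property (which yields $\Sigma_{ac} \subseteq \sigma_{ac}(\Lambda_{\varphi'})$) with Remling's monotonicity of the a.c.\ spectrum under right-limits (which gives $\sigma_{ac}(\Lambda_{\varphi'}) \subseteq \sigma_{ac}(\Lambda_\varphi) = \Sigma_{ac}$); then invoke Kotani's theorem for the ergodic family $(\mathcal{R}, T_\ell, \nu)$ to exclude singular spectrum and to ensure that the essential closed support of $\sigma_{ac}$ equals the full spectrum $\nu$-a.e., hence along every $\omega$-limit.

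The main obstacle is the Dirac version of Remling's oracle theorem itself. The arguments in \cite{Remling2011Annals, Remling2015CMP} rest on tools — transfer matrix estimates, Jost solution asymptotics, Kotani--Last sum rules — phrased in Jacobi or Schr\"odinger language. Transplanting each to the Arov gauge canonical system of Section~\ref{sec2} constitutes the bulk of the work. However, the Herglotz structure of $R$, the Schur characterization of reflectionlessness in Definition~\ref{def:reflectionless}, and the half-line Schur function formalism developed in \cite{BLY1, BLY2, FLLZ25} supply the necessary machinery, so the adaptation is expected to proceed without substantively new ideas, reducing this step to a translation exercise.
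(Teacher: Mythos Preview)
The paper does not supply its own proof of this statement; it attributes the result to Remling \cite{Remling2011Annals, Remling2015CMP} and records it as background. Your plan to adapt Remling's oracle theorem via the half-line Schur function formalism is the expected route and matches the paper's implicit intent in citing those references.

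That said, your final step contains a genuine gap. You propose to ``invoke Kotani's theorem for the ergodic family $(\mathcal{R}, T_\ell, \nu)$ to exclude singular spectrum,'' but Kotani theory does not do this. Kotani's support theorem identifies the essential support of the a.c.\ part with the essential closure of the zero set of the Lyapunov exponent; it says nothing about the absence of singular components. Ruling out singular spectrum for reflectionless operators requires additional geometric hypotheses on the set --- e.g.\ weak homogeneity, via Poltoratski--Remling (Theorem~\ref{thm:weakHomoAc}) --- none of which are assumed in the statement. Your inference ``$\nu$-a.e., hence along every $\omega$-limit'' is likewise unjustified: a property holding off a $\nu$-null set need not persist along all accumulation points of a given orbit. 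What Remling's oracle theorem actually delivers is that every $\omega$-limit is reflectionless on $\Sigma_{ac}$; the stronger conclusion that each such limit has purely a.c.\ spectrum equal to $\Sigma_{ac}$ and satisfies the normalization $s_-(i)=0$ (as the paper's formal definition of $\mathcal{R}(\sfE)$ requires) does not follow from the ingredients you list. Your first two paragraphs, which establish reflectionlessness, are the correct core; the third overreaches.
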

By Poincaré Recurrence Theorem, one obtains an almost-everywhere recurrence statement for the translation flow; compare \cite{ConciniJohnson1987}  in the special case in which $\mathsf{E}$ is a finite union of intervals.
We will see in the next section (c.f. Corollary \ref{cor:ergodicSys}) that the set of potentials in $\mathcal{R}(\sfE)$ which map to irregular characters has zero weight with respect to the ergodic measure $d\varphi$ on $\mathcal{R}(\sfE)$. Hence we obtain:
\begin{coro}
    Assume that $\sfE$ is in the generic position and satisfies the finite gap length condition, the domain $\Omega=\bbC\setminus\sfE$ is a regular PWS. For $d\varphi$-- a.e. $\varphi\in \mathcal{R}(\mathsf{E})$, $\Lambda_\varphi$ is reflectionless. 
\end{coro}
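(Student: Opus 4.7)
The plan is to construct an ergodic $T_\ell$-invariant probability measure $d\varphi$ on $\mathcal{R}(\sfE)$ by pulling back Haar measure through the generalized Abel map, and then to combine Poincar\'e recurrence with the $\omega$-limit theorem of Remling cited just above.

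First I would use the standing hypotheses together with Theorems~\ref{thm:JacobiInver} and \ref{thm:linAbel} to realize $\mathcal{R}(\sfE)$ as a measure-theoretic factor of a linear flow. Since $\Omega$ is a regular PWS and $\sfE$ satisfies the finite gap length condition, the composition $\mathcal{A}\circ\mathcal{B}:\mathcal{R}(\sfE)\to\pi_1(\Omega)^*\times\T$ is well defined and surjective, and Theorem~\ref{thm:linAbel} conjugates $T_\ell$ to the linear flow $(\alpha,\tau)\mapsto(\alpha+\ell\eta,\tau)$. Because $\sfE$ is in generic position, $\eta$ is rationally independent, which forces this flow to be minimal and uniquely ergodic on $\pi_1(\Omega)^*$ with the $\tau$-coordinate held fixed. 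By the Volberg--Yuditskii theorem, the irregular set $\Theta$ has zero Haar measure, so $\mathcal{A}\circ\mathcal{B}$ is a bimeasurable bijection off a null set. Pushing Haar measure forward along $(\mathcal{A}\circ\mathcal{B})^{-1}$ then gives the desired ergodic probability measure $d\varphi$ on $\mathcal{R}(\sfE)$.

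With $d\varphi$ in hand I would apply the Poincar\'e recurrence theorem to the ergodic system $(\mathcal{R}(\sfE),T_\ell,d\varphi)$: for $d\varphi$-a.e. $\varphi$ there exists $\ell_j\to\infty$ with $T_{\ell_j}\varphi\to\varphi$, and therefore $\varphi\in\omega(\varphi)$. Combined with the inclusion $\omega(\varphi)\subset\mathcal{R}(\Sigma_{\mathrm{ac}})$ recalled from Remling's theorem, this shows $\Lambda_\varphi$ is reflectionless on its absolutely continuous spectrum for $d\varphi$-a.e. $\varphi\in\mathcal{R}(\sfE)$.

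The step I expect to be the most delicate is justifying the pushforward construction: one has to check that $(\mathcal{A}\circ\mathcal{B})^{-1}$ is measurable off $\Theta$, and that the resulting measure is genuinely supported on $\mathcal{R}(\sfE)$ rather than on some larger class of potentials whose Schur pairs fall outside $\mathcal{S}_A(\sfE)$. The continuity of the parametrization proved in \cite{BLY2,FLLZ25} should make this work, but the extra rotational component $\tau$ specific to the Dirac setting requires a gauge normalization (compare the remark following Theorem~\ref{thm:linAbel}) to remove spurious $\ell$-dependence; once that normalization is in place, the remainder is the standard Kotani--Remling argument applied to the new ergodic system.
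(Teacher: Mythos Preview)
Your proposal is correct and follows essentially the same route as the paper: construct the ergodic measure $d\varphi$ on $\mathcal{R}(\sfE)$ so that $\mathcal{A}\circ\mathcal{B}$ intertwines $T_\ell$ with the linear flow $\alpha\mapsto\alpha+\ell\eta$ (Theorem~\ref{thm:linAbel}), invoke Volberg--Yuditskii to discard the irregular set $\Theta$, and then combine Poincar\'e recurrence with Remling's inclusion $\omega(\varphi)\subset\mathcal{R}(\Sigma_{\mathrm{ac}})$. The only cosmetic difference is that the paper builds $d\varphi$ starting from the explicit Volberg--Yuditskii measure $d\chi$ on the divisor torus $\mathcal{D}(\sfE)$ (Theorem~\ref{thm:ergodicMeas}) and then identifies its $\mathcal{A}$-pushforward with Haar in Corollary~\ref{cor:ergodicSys}, whereas you begin on the character side and pull Haar back along $(\mathcal{A}\circ\mathcal{B})^{-1}$; since $\mathcal{B}$ is a homeomorphism these two constructions coincide.
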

The natural inverse problem asks when reflectionlessness on a set $\mathsf{E}$ of positive measure precludes a singular component of the spectrum on $\sfE$.
 A set $\sfE \subseteq \bbR$ is called {\it homogeneous} (in the sense of Carleson) if
 \[\inf_{\lambda\in\sfE}\inf_{0<h<\mathrm{diam}(\sfE)}\frac{|\sfE\cap(\lambda-h,\lambda+h)|}{2h}>0,\]
  where $|\cdot|$ denotes the Lebesgue measure on $\bbR$.
 A Borel set $\sfE\subset \bbR$ is said to be {\it weakly homogeneous} if, for every $\lambda\in\sfE$,
\[\limsup_{h\to 0^+}\frac{|\sfE\cap(\lambda-h,\lambda+h)|}{2h}>0.\]

While homogeneity is sufficient for  DCT \cite{SY97}, it is not necessary \cite{Yuditskii2011DCTFails}.
A nontrivial example of a homogeneous set is a Cantor set of positive measure \cite{PeherstorferYuditskii2003}.
The following useful criterion is due to M. Sodin, based on \cite{Craig89}. 
\begin{theorem}
    Let $\mathsf{E}=[a_0,b_0]\setminus\cup_{j}(a_j,b_j)$, $\gamma_j=b_j-a_j,\gamma_0=1,$ $\eta_{jk}$ is the distance between $(a_j,b_j)$ and $(a_k,b_k),k\neq j$ and $\eta_{j0}$ is the distance between $(a_j,b_j)$ and the boundary of $[a_0,b_0]$.
    If the following condition holds
    $$\sum_{j}\frac{\gamma_j^{\frac{1}{2}}\gamma_k^{\frac{1}{2}}}{\eta_{jk}}<\infty$$
    then $\mathsf{E}$ is homogeneous.
\end{theorem}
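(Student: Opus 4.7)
The goal is to produce an absolute constant $c>0$ verifying the homogeneity bound
$$\inf_{\lambda\in\sfE,~0<h<\mathrm{diam}(\sfE)}\frac{|\sfE\cap(\lambda-h,\lambda+h)|}{2h}\geq c.$$
Equivalently, writing $J(\lambda,h)=\{j:(a_j,b_j)\cap(\lambda-h,\lambda+h)\neq\emptyset\}$, one needs
$$\sum_{j\in J(\lambda,h)}|(a_j,b_j)\cap(\lambda-h,\lambda+h)|\leq(2-c)h$$
for every $\lambda\in\sfE$ and $0<h<\mathrm{diam}(\sfE)$.

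My plan is a reference-gap argument. Fix $\lambda\in\sfE$ and $h>0$. If no gap meets $B(\lambda,h)=(\lambda-h,\lambda+h)$, homogeneity is trivial, so assume otherwise and select $k\in J(\lambda,h)$ with $\gamma_k$ maximal. For every other gap $(a_j,b_j)\subset B(\lambda,h)$, the separation $\eta_{jk}$ is at most $2h$; the factorization
$$\gamma_j=\frac{\gamma_j^{1/2}\gamma_k^{1/2}}{\eta_{jk}}\cdot\frac{\gamma_j^{1/2}\eta_{jk}}{\gamma_k^{1/2}}$$
followed by a Cauchy--Schwarz step lets me convert $\sum_{j\neq k}\gamma_j$ into a product involving the Craig--Sodin series at the index $k$, weighted by a factor of order $h/\gamma_k^{1/2}$. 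Combined with the bound $\gamma_k\leq 2h$ and the contribution of at most two boundary-straddling gaps, this yields a deficit estimate of the schematic form $\gamma_k+C\sqrt{h\gamma_k}\cdot(\text{Craig--Sodin tail at }k)$.

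The main obstacle I anticipate is the balancing of constants. The Craig--Sodin hypothesis provides finiteness but not a priori smallness of the series, so the naive estimate above does not immediately yield $(2-c)h$ for a fixed absolute $c>0$. To close the argument, I would refine the reference-gap selection by a scale-truncation: at scale $h$ only finitely many macroscopic gaps (those with $\gamma_j$ comparable to $h$) contribute, and the aggregate contribution of the remaining microscopic gaps is controlled by a tail of the Craig--Sodin series which, after a uniform-in-$k$ truncation, can be made arbitrarily small. An alternative route, closer to Sodin's original potential-theoretic argument building on \cite{Craig89}, would deduce homogeneity from uniform regularity of the Green's function of $\Omega=\bbC\setminus\sfE$: the Craig--Sodin series is the master convergence estimate for the critical-point expansion of $G(\cdot,z_0)$, and via the harmonic-measure/Green-function correspondence on $\partial\Omega$ this translates into a uniform lower bound on harmonic-measure density at each point of $\sfE$, hence Carleson homogeneity.
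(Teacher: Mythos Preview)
The paper does not prove this theorem at all: it is quoted as a known criterion ``due to M.\ Sodin, based on \cite{Craig89}'' and stated without proof, so there is no argument in the paper to compare your proposal against.

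On the merits of your plan itself: the reference-gap/Cauchy--Schwarz idea is natural, and you correctly isolate the real difficulty, namely that finiteness of the Craig--Sodin series gives no uniform smallness and hence no immediate absolute constant $c>0$. Your proposed scale-truncation patch is where I am not yet convinced. Separating ``macroscopic'' gaps ($\gamma_j\gtrsim h$) from ``microscopic'' ones does bound the \emph{number} of macroscopic gaps in $B(\lambda,h)$, but not their aggregate length in any way that is independent of the Craig--Sodin constant; a single macroscopic gap on each side of $\lambda$ can already eat almost all of $2h$, and nothing in the argument so far prevents this unless you have already fed in some homogeneity-type input. In other words, the combinatorial route as written risks circularity at exactly the step where you invoke ``only finitely many macroscopic gaps contribute.''

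Your second route is the right one and is essentially Sodin's argument: the Craig--Sodin series is precisely what controls the modulus of continuity of the Green's function (or equivalently of the potential of the equilibrium measure) up to the boundary, via Craig's estimate; H\"older regularity of $G(\cdot,z_0)$ at every boundary point then yields, through the standard Green's-function/harmonic-measure correspondence, a uniform lower density of harmonic measure---and hence of Lebesgue measure---at every $\lambda\in\sfE$, which is Carleson homogeneity. If you want to complete a proof, I would abandon the direct combinatorial attack and flesh out this potential-theoretic line instead.
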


An example of a Widom domain with DCT whose boundary is not homogeneous (but is weakly homogeneous) is given in \cite[Section 6]{Yuditskii2011DCTFails}. 
\begin{theorem}[\cite{PoltoratskiRemling2009CMP}]\label{thm:weakHomoAc}
    Suppose $\mathsf{E}$ is weakly homogeneous and $\Lambda_\varphi$ is reflectionless on $\mathsf{E}$. Then $\Lambda_\varphi$ has purely absolutely continuous spectrum on $\mathsf{E}$.
\end{theorem}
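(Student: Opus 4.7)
The plan is to adapt the argument of Poltoratski–Remling \cite{PoltoratskiRemling2009CMP} to the Dirac setting. Their Jacobi/Schr\"odinger proof depends only on the Herglotz nature of the Weyl functions $m_\pm$ and the reflectionless identity \eqref{eq:reflectionlessMfunc}, both of which are available here through the correspondence \eqref{eq:mFunc} between the Schur pair $(s_+,s_-)$ and $(m_+,m_-)$.

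First I would reformulate the goal. Using the standard $2\times 2$ Herglotz representation of the spectral measure in terms of $m_\pm$, the absolutely continuous density on $\sfE$ is proportional to $\Im m_+(\xi+i0)$, while Poltoratski's non-tangential limit theorem concentrates the singular part $\mu_s$ on
\[
S=\bigl\{\xi\in\bbR : \Im m_+(\xi+iy)\to\infty \text{ non-tangentially as } y\to 0^+\bigr\}.
\]
The reflectionless relation $m_+(\xi+i0)=\overline{m_-(\xi+i0)}$ a.e.\ on $\sfE$ gives a pseudocontinuation $\widetilde m_+$ of $m_+$ through $\sfE$ into the lower half-plane via $\overline{m_-(\bar z)}$. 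So it suffices to show that for every $\xi_0\in\sfE$ the non-tangential $\limsup$ of $\Im m_+$ at $\xi_0$ is finite.

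Fix $\xi_0\in\sfE$. Weak homogeneity supplies $h_n\to 0$ with $|\sfE\cap(\xi_0-h_n,\xi_0+h_n)|\geq c h_n$. Following Poltoratski–Remling, I would apply a Poisson–Jensen estimate to the subharmonic function $\log|\widetilde m_+|$ on the Carleson box $(\xi_0-h_n,\xi_0+h_n)\times(0,h_n)$. The boundary integral along the bottom splits into a contribution over $\sfE\cap(\xi_0-h_n,\xi_0+h_n)$, where $|\widetilde m_+|=|m_-|<\infty$ a.e.\ by reflectionlessness, and a contribution over gaps, where condition (c) of Definition~\ref{def:reflectionless} together with the explicit form of the pseudocontinuation gives pointwise bounds. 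The weak homogeneity lower bound forces the $\sfE$-contribution to carry a positive fraction of the total harmonic mass, yielding a uniform upper bound on $\log|\widetilde m_+(\xi_0+ih_n)|$ and hence on $\Im m_+(\xi_0+ih_n)$. This excludes $\xi_0\in S$, so $\mu_s(\sfE)=0$.

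The main obstacle is quantifying the harmonic-measure estimate in the Dirac context. While the structural ingredients — subharmonicity of $\log|\widetilde m_+|$, a.e.\ boundedness on $\sfE$, and controlled gap contributions — carry over from \cite{PoltoratskiRemling2009CMP}, one must verify that the gap contributions to the Poisson–Jensen integral are genuinely integrable and do not swamp the dominant $\sfE$-contribution, especially when $\sfE$ has infinitely many small gaps. This uses the Herglotz monotonicity of $m_\pm$ in each gap together with the symmetry forced by reflectionlessness, and should follow by direct estimation rather than any fundamentally new ingredient.
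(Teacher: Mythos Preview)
The paper does not supply a proof of this theorem; it is quoted as a result from \cite{PoltoratskiRemling2009CMP} (specifically their Corollary~2.3) and used as a black box, for instance in the proof of Corollary~\ref{coro:counterEx}. So there is no ``paper's own proof'' to compare against.

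Your sketch is a faithful outline of the Poltoratski--Remling argument, and the strategy is sound. One comment on scope: the result in \cite{PoltoratskiRemling2009CMP} is formulated for arbitrary reflectionless Herglotz functions, not for a specific operator class. Once you have observed via \eqref{eq:mFunc} and \eqref{eq:reflectionlessMfunc} that the Dirac $m_\pm$ are Herglotz and satisfy the same reflectionless identity on $\sfE$, their theorem applies verbatim; there is no genuinely ``Dirac-specific'' obstacle to overcome. In particular, the worry you flag about gap contributions to the Poisson--Jensen integral is already handled in full generality by their function-theoretic argument and does not need to be redone here. The cleaner route is simply to invoke their result for the Herglotz function $m_+$ (or the diagonal Green's function $R$) rather than to re-run the subharmonic estimate.
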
 
In general, reflection measures may contain singular components. Theorem \ref{thm:weakHomoAc}, however,  shows that weak homogeneity excludes all singular components. One might ask whether weak homogeneity is also sufficient for the Direct Cauchy Theorem. This is not the case in general; see \cite[Section 5]{Yuditskii2011DCTFails}.

Let $\mathcal{W}$ denote the set of all Denjoy domains of Widom type. 
Let $W_{hom}\subset\mathcal{W}$ be the set of domains whose boundary is a compact homogeneous subset of $\bbR,$ $W_{DCT}\subset \mathcal{W}$ is the set of domains of $\Omega$ where DCT holds on $\Omega$ and let $W_{a.c.}\subset\mathcal{W}$ be the set of domains whose boundary supports no singular reflection measures. The following result of Yuditskii \cite{Yuditskii2011DCTFails} shows that the inclusions are strict. 
\begin{theorem}\label{thm:properSubsetDomains}
Each of the inclusions in the following is proper:
$$W_{hom}\subset W_{DCT}\subset W_{a.c.}$$
\end{theorem}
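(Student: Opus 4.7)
The plan is to verify the two inclusions and then exhibit explicit domains realizing the strict separations. The first inclusion $W_{hom}\subset W_{DCT}$ is the classical theorem of Sodin--Yuditskii \cite{SY97}: homogeneity of $\sfE$ in Carleson's sense implies that $\Omega=\bbC\setminus\sfE$ satisfies DCT. The second inclusion $W_{DCT}\subset W_{a.c.}$ follows from Theorem \ref{thm:JacobiInver}: if DCT holds on a Widom domain, the generalized Abel map $\mathcal{A}$ is a homeomorphism $\mathcal{D}(\sfE)\to\pi_1(\Omega)^*\times\T$, so every reflectionless spectral datum is realized by some $\varphi\in\mathcal{R}(\sfE)$, and $\mathcal{R}(\sfE)$ consists of operators with purely absolutely continuous spectrum by definition; hence no singular reflection measure can live on $\partial\Omega$.

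For the strict inclusion $W_{hom}\subsetneq W_{DCT}$, I would construct a Denjoy domain whose boundary is a Cantor-type set that fails Carleson homogeneity at a single accumulation point while still obeying DCT. The idea is to arrange gap lengths $\gamma_j$ and inter-gap distances $\eta_{jk}$ so that the Widom condition \eqref{eq:WidomCond} and the finite gap length condition \eqref{eq:finiteGapCond} both hold, and so that the Sodin sum $\sum_{j,k}\gamma_j^{1/2}\gamma_k^{1/2}/\eta_{jk}$ diverges near the bad point (destroying the sufficient condition for homogeneity, and in fact forcing the local density ratio to tend to zero along a subsequence), yet the set remains weakly homogeneous and DCT can be verified directly through harmonic measure estimates on the corresponding Widom function. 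This is the scheme carried out in \cite[Section 6]{Yuditskii2011DCTFails}, where gaps cluster more tightly than Carleson permits at one point while the global structure is preserved.

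For the strict inclusion $W_{DCT}\subsetneq W_{a.c.}$, I would construct a weakly homogeneous Widom set on which DCT fails. By Theorem \ref{thm:weakHomoAc}, every reflectionless operator on such an $\sfE$ has purely absolutely continuous spectrum, so $\Omega\in W_{a.c.}$, while the failure of DCT places $\Omega\notin W_{DCT}$. To force the failure of DCT, one tests the identity in Definition \ref{def:DCT} against an explicit $f\in\mathcal{H}^1(\Omega,\beta(\mathcal{W}))$ whose periods around suitably chosen loops $l_k$ do not cancel. Concretely, \cite[Section 5]{Yuditskii2011DCTFails} builds a Cantor-type $\sfE$ whose harmonic measure has the pointwise density required for weak homogeneity, but whose critical-point distribution makes $\mathcal{W}$ degenerate enough that the Cauchy integral $\oint f\,d\Theta/\mathcal{W}$ is nonzero for a specific reproducing-kernel-type test function.

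The principal obstacle is the second strict inclusion. Weak homogeneity is a \emph{pointwise} density condition, whereas DCT is a \emph{global} identity on the character-automorphic Hardy space $\mathcal{H}^1(\Omega,\beta(\mathcal{W}))$; producing an $\sfE$ that simultaneously satisfies the former and violates the latter requires a delicate quantitative balance. One must control the harmonic measure from below at every boundary point while ensuring that the Widom product $\mathcal{W}$ and the multivalued Martin function $\Theta$ combine to give a nonvanishing Cauchy integral for at least one explicit test function, and this verification is the technical heart of the argument in \cite{Yuditskii2011DCTFails}.
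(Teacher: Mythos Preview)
The paper does not give its own proof of this theorem; it is stated with attribution to Yuditskii \cite{Yuditskii2011DCTFails}, and the surrounding discussion already records the same references you cite (Sodin--Yuditskii for $W_{hom}\subset W_{DCT}$, \cite[Section 6]{Yuditskii2011DCTFails} for the first strict inclusion, \cite[Section 5]{Yuditskii2011DCTFails} for the second). In that sense your roadmap matches the paper's.

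That said, your argument for the inclusion $W_{DCT}\subset W_{a.c.}$ via Theorem \ref{thm:JacobiInver} is circular. You say that under DCT the Abel map $\mathcal{A}:\mathcal{D}(\sfE)\to\pi_1(\Omega)^*\times\T$ is a homeomorphism, ``so every reflectionless spectral datum is realized by some $\varphi\in\mathcal{R}(\sfE)$, and $\mathcal{R}(\sfE)$ consists of operators with purely absolutely continuous spectrum by definition.'' But $\mathcal{R}(\sfE)$ is \emph{defined} in the paper as the class of reflectionless operators with purely a.c.\ spectrum equal to $\sfE$; the Abel-map bijection parametrizes that class by divisors and characters, it does not tell you that every reflectionless Herglotz function on $\sfE$ belongs to it. The content of $\Omega\in W_{a.c.}$ is precisely that \emph{no} reflectionless measure on $\sfE$ can carry a singular part, and that requires a function-theoretic argument (the DCT forces the resolvent, via its representation through reproducing kernels in $\mathcal{H}^2(\Omega,\alpha)$, to be outer in the right sense, which excludes singular mass). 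Your stated deduction skips this step and assumes the conclusion.
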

If we further define $W_{w-hom}\subset \mathcal{W}$ to be the set of domains whose boundaries are weakly homogeneous, then from the above discussions we see $W_{w-hom}\subset W_{a.c.}$ but $ W_{w-hom}\setminus W_{DCT}\neq\emptyset$. 
Whether $W_{DCT}\subset W_{w-hom}$ holds remains open to the best of our knowledge.

\subsection{Invariant Measure}
An important step in the proof of \cite{VolbergYuditskii2014} is to show that $\mathcal{R}(\mathsf{E})$ admits an ergodic measure $d\varphi$ which assigns positive weight to open sets and is invariant under translations $T_\ell(\varphi)=\varphi(\cdot-\ell).$ By our assumption, $\mathcal{R}(\mathsf{E})$ is homeomorphic to $\mathcal{D}(\mathsf{E})$. Let $d\chi$ be the pushforward measure of $d\varphi$ on $\mathcal{D}(\mathsf{E})$.

\begin{theorem}[\cite{VolbergYuditskii2014}]\label{thm:ergodicMeas}
    Let $\omega_k(z)$ be the harmonic measure of the set $\mathsf{E}_k\subset\mathsf{E}$ evaluated at $z\in\Omega$. Then for an open subset of $\mathcal{D}(\mathsf{E})$:
    $$O=\{D:\mu_{i_1}\in I_{j_1}=(a_{j_1}',b_{j_1}'),\cdots,\mu_{i_\ell}\in I_{j_{\ell}}=(a_{j_\ell}',b_{j_\ell}')\},$$
    where $[a_{j_m}',b_{j_m}']\subset (a_{j_m},b_{j_m})$, and a fixed collections of $\{\epsilon_{i_m}\}$, $1\leq m \leq \ell$,
    we have
    $$\chi(O)=\frac{1}{2^\ell}\left|\begin{aligned}
        &\omega_{j_1}(b_{j_1}')&-&\omega_{j_1}(a_{j_1}')&\cdots&\omega_{j_1}(b_{j_\ell}')&-&\omega_{j_\ell}(a_{j_\ell}')\\
        &&\vdots&&&&\vdots&\\
        &\omega_{j_\ell}(b_{j_1}')&-&\omega_{j_\ell}(a_{j_1}')&\cdots&\omega_{j_\ell}(b_{j_\ell}')&-&\omega_{j_\ell}(a_{j_\ell}')\\
    \end{aligned}\right|.$$
\end{theorem}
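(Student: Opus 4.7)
The strategy, adapting \cite{VolbergYuditskii2014}, is to identify $d\chi$ as the pullback of the Haar measure $d\alpha$ on $\pi_1(\Omega)^*$ via the generalized Abel map, and then to evaluate the finite-dimensional marginal on the $\ell$ divisor coordinates specified by $O$ using the explicit formula for $\mathcal{A}$. By Theorem \ref{thm:linAbel} and the generic position hypothesis, $\mathcal{A}\circ\mathcal{B}$ conjugates the translation flow $T_\ell$ on $\mathcal{R}(\sfE)$ to the linear flow $\alpha\mapsto\alpha+\ell\eta$ on $\pi_1(\Omega)^*$ (the rotation coordinate $\tau$ being frozen by an appropriate gauge choice), which is minimal and uniquely ergodic with respect to $d\alpha$. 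Consequently $d\varphi$ pushes forward under $\mathcal{A}\circ\mathcal{B}$ to $d\alpha$, so $d\chi=\mathcal{A}^*d\alpha$ on $\mathcal{D}(\sfE)$.

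Differentiating the Abel map \eqref{eq:Abel2}--\eqref{eq:Abel1} in the divisor variables yields
$$\frac{\partial\alpha(l_k)}{\partial\mu_j}=\frac{\epsilon_j}{2}\,\omega_k'(\mu_j),$$
where $\omega_k(z)=\omega(z,\sfE_k)$ is the harmonic measure of $\sfE_k$, a smooth function of $\mu$ inside each gap. Restricting $\mathcal{A}$ to the finite-dimensional slice obtained by fixing all $\mu_j$ with $j\notin\{i_1,\dots,i_\ell\}$ and all signs, the map
$$(\mu_{i_1},\dots,\mu_{i_\ell})\longmapsto(\alpha(l_{j_1}),\dots,\alpha(l_{j_\ell}))$$
becomes a local diffeomorphism with Jacobian matrix $\bigl[\tfrac{\epsilon_{i_n}}{2}\omega_{j_m}'(\mu_{i_n})\bigr]_{m,n=1}^\ell$, and therefore absolute Jacobian determinant $\tfrac{1}{2^\ell}|\det[\omega_{j_m}'(\mu_{i_n})]|$. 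Since $\mathcal{A}$ is a homeomorphism under DCT (Theorem \ref{thm:JacobiInver}), this restriction is a genuine diffeomorphism on its image, so $\det[\omega_{j_m}'(\mu_{i_n})]$ is nonvanishing and of constant sign on the box $\prod_n I_{j_n}$.

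Identifying $\pi_1(\Omega)^*\cong\T^{\bbN}$ (indexed by the generators $\{l_k\}$) with the product Haar measure, the marginal of $d\alpha$ on the coordinates $(\alpha(l_{j_1}),\dots,\alpha(l_{j_\ell}))$ is Lebesgue on $\T^\ell$; pulling back through the above diffeomorphism, the marginal of $d\chi$ on $(\mu_{i_1},\dots,\mu_{i_\ell})$ has density $\tfrac{1}{2^\ell}|\det[\omega_{j_m}'(\mu_{i_n})]|$ with respect to Lebesgue. Integrating over $O$ and using the constancy of sign together with the multilinearity of the determinant in its columns and Fubini,
$$\chi(O)=\frac{1}{2^\ell}\left|\int_{\prod_n I_{j_n}}\!\det[\omega_{j_m}'(\mu_{i_n})]\,d\mu_{i_1}\cdots d\mu_{i_\ell}\right|=\frac{1}{2^\ell}\bigl|\det[\omega_{j_m}(b_{j_n}')-\omega_{j_m}(a_{j_n}')]\bigr|,$$
which is the desired formula.

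The main delicacy is the infinite-dimensional nature of the Abel map: since every $\alpha(l_k)$ depends on all divisor coordinates $\mu_j$ simultaneously, it is not a priori clear that pushing Haar back yields precisely the claimed density when restricted to any finite subcollection of the $\mu_j$'s. Following \cite{VolbergYuditskii2014}, this is handled by exhausting $\sfE$ by finite-gap subsets $\sfE^{(N)}\uparrow\sfE$ (possible by the finite-gap length condition \eqref{eq:finiteGapCond}), on which the claim reduces to a standard finite-dimensional change-of-variables on a torus, and then passing to the limit $N\to\infty$ using the continuity of the Abel map under such truncations (cf. \cite[Section 6]{FLLZ25}).
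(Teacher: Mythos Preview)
Your proposal is correct and follows essentially the same route the paper indicates: the paper does not give its own proof of this statement but simply records that it ``is proved by a finite gap approximation, see \cite[Section 7]{VolbergYuditskii2014},'' and your sketch---pulling back Haar measure through the Abel map, computing the finite-dimensional Jacobian $\tfrac{1}{2^\ell}|\det[\omega_{j_m}'(\mu_{i_n})]|$, and justifying the passage to infinitely many gaps by exhausting $\sfE$ with finite-gap sets---is precisely that argument spelled out. One small caution: your invocation of Theorem~\ref{thm:JacobiInver} to get injectivity of $\mathcal{A}$ presupposes DCT, which is not assumed in the statement, but your last paragraph correctly resolves this since finite-gap domains always satisfy DCT and the limit $N\to\infty$ does not require it.
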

This is proved by a finite gap approximation, see \cite[Section 7]{VolbergYuditskii2014}.
Let $d\varphi$ be the shift invariant measure of $\mathcal{R}(\mathsf{E}).$ For a given $\varphi$, assume that $(\alpha,\tau)=\mathcal{A}\circ\mathcal{B}(\varphi),$ since $\mathcal{A}\circ\mathcal{B}(T_\ell\varphi)=(\alpha+\ell\eta,\tau)$, the Haar measure $d\alpha=\prod_{j=1}^Ndm(\alpha_j)$, where $d m(\alpha)$ is the Lebesgue measure on $\T$ that is shift invariant.
\begin{coro}\label{cor:ergodicSys} The system
    $(\mathcal{R}(\mathsf{E}),T_\ell,d\varphi)$ is ergodic and $|(\mathcal{A}\circ\mathcal{B})^{-1}(\{(\alpha,\tau):\alpha\in\Theta\})|_{d\varphi}=0$.
\end{coro}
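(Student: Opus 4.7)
The plan is to use the generalized Abel map to conjugate $(\mathcal{R}(\mathsf{E}), T_\ell, d\varphi)$ to a linear flow on $\pi_1(\Omega)^* \times \T$ with the $\tau$-coordinate frozen, and then exploit the rational independence of the harmonic frequencies $\eta$ supplied by the generic position assumption.

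I would first identify the pushforward measure $\nu := (\mathcal{A}\circ\mathcal{B})_* d\varphi$. Since $\mathcal{B}_* d\varphi = d\chi$ is described in Theorem \ref{thm:ergodicMeas} in terms of determinants of harmonic-measure differences on cylinders of $\mathcal{D}(\mathsf{E})$, and the Abel map is given explicitly by \eqref{eq:Abel1}--\eqref{eq:Abel2}, a direct computation using the finite-dimensional projections onto $\T^N$ shows that $\nu$ is absolutely continuous with respect to Haar measure $d\alpha$ on $\pi_1(\Omega)^*$ (with $\tau$ frozen by the gauge remark following Theorem \ref{thm:JacobiInver}). Because $d\varphi$ is $T_\ell$-invariant and $\mathcal{A}\circ\mathcal{B}(T_\ell\varphi) = (\alpha + \ell\eta, \tau)$ by Theorem \ref{thm:linAbel}, the measure $\nu$ is invariant under the translation flow $(\alpha,\tau) \mapsto (\alpha + \ell\eta, \tau)$.

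Next, the generic position assumption makes $\eta$ rationally independent. By classical Weyl equidistribution, the flow $\alpha \mapsto \alpha + \ell\eta$ on the compact abelian group $\pi_1(\Omega)^*$ is minimal and uniquely ergodic, with Haar measure as the unique invariant probability measure. Together with the invariance and absolute continuity established above, this forces $\nu$ to coincide with normalized Haar measure on the fixed $\tau$-slice, so the factor flow on $\pi_1(\Omega)^* \times \{\tau\}$ is ergodic. Pulling this back through the measurable conjugacy $\mathcal{A}\circ\mathcal{B}$ yields ergodicity of $(\mathcal{R}(\mathsf{E}), T_\ell, d\varphi)$. For the zero-measure statement, the Volberg--Yuditskii theorem gives $|\Theta|_{d\alpha} = 0$, so
$$
|(\mathcal{A}\circ\mathcal{B})^{-1}(\{(\alpha,\tau):\alpha\in\Theta\})|_{d\varphi} = \nu(\Theta\times\T) = 0
$$
by the pushforward identity combined with $\nu \ll d\alpha$ in the first coordinate.

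The principal obstacle is verifying $\nu \ll d\alpha$ in the first step. Theorem \ref{thm:ergodicMeas} describes $d\chi$ on divisor cylinders in terms of determinants of harmonic-measure increments, while Haar measure on $\pi_1(\Omega)^*$ is most naturally written in character coordinates. Matching the two measures amounts to showing that the Jacobian of the finite-dimensional projection of $\mathcal{A}$ is precisely the harmonic-measure determinant appearing in Theorem \ref{thm:ergodicMeas}; this is a cocycle computation of the type carried out in the finite-gap setting of \cite{GH03} and extended to infinite gaps in \cite{FLLZ25}, but it must be performed without invoking DCT since the corollary should cover the case where DCT fails. Once this identification is in hand, unique ergodicity of the linearized flow together with the Volberg--Yuditskii null-set theorem closes out both assertions.
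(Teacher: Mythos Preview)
Your overall strategy---push forward $d\varphi$ along $\pi=\mathcal{A}\circ\mathcal{B}$, identify the image as a linear flow on $\pi_1(\Omega)^*$, and invoke generic position plus Volberg--Yuditskii---matches the paper. But the step you flag as the ``principal obstacle'' is unnecessary, and this is exactly where the paper's proof is simpler. Once you know $\nu=\pi_*d\varphi$ is invariant under $\alpha\mapsto\alpha+\ell\eta$ (which you already deduced from Theorem~\ref{thm:linAbel}), unique ergodicity of that flow forces $\nu$ to \emph{equal} Haar measure outright; there is no need to first verify $\nu\ll d\alpha$ by any Jacobian or cylinder computation. So Theorem~\ref{thm:ergodicMeas} and the explicit Abel formulas play no role in this corollary, and the cocycle computation you worry about never has to be done.

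One smaller point on the logic: you call $\mathcal{A}\circ\mathcal{B}$ a ``measurable conjugacy'' and pull back ergodicity before establishing the null-set statement. But $\pi$ is only a factor map a~priori, and ergodicity of a factor does not in general lift to the extension. The paper handles this by first concluding $\nu=d\alpha$, then using $|\Theta|_{d\alpha}=0$ to get $|\pi^{-1}(\Theta)|_{d\varphi}=0$, and only then observing that $\pi$ restricted to the complement of $\pi^{-1}(\Theta)$ is a genuine bijection onto $\pi_1(\Omega)^*\setminus\Theta$ (since $\mathcal{B}$ is a homeomorphism and $\mathcal{A}$ is injective over regular characters), which makes the two systems isomorphic mod null sets. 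Reordering your argument this way closes the gap.
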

\begin{proof}
Let $\pi=\mathcal{A}\circ\mathcal{B}:\mathcal{R}(\mathsf{E})\to\pi_1(\Omega)^*\times\{\tau\}$ for simplicity. Let $d\varphi$ be an invariant measure of the translation, since $\pi$ is a continuous map, define $d\pi(\varphi)$ as follows
$$\int_Od\pi(\varphi)=\int_{\pi^{-1}(O)}d\varphi,$$
where $O\subset\pi_1(\Omega)^*$ is an open set.
For any $\varphi\in \mathcal{R}(\mathsf{E})$, let $\varphi$ be the character such that $\pi(\varphi)=\alpha$. Then by Theorem \ref{thm:linAbel}\footnote{Since the extra coordinate $\tau$ is a constant, we omit it from the notation.}
$$\pi(T_\ell\varphi)=\alpha+\eta\ell,$$
where $\eta$ is the independent frequencies associated to $\mathsf{E}$. Consequently, for every $\alpha$,
$$\overline{\{\alpha+\eta\ell\}}=\pi_1(\Omega)^*.$$
Since $d\varphi$ is $T_\ell$ invariant, it follows that $d\pi(\varphi)$ is shift invariant and is the unique Haar measure, that is $d\pi(\varphi)=d\alpha.$
Let $R^\ell_\eta\alpha=\alpha+\eta\ell$ be the standard rotation, then $(\pi_1(\Omega)^*,R^\ell_\eta,d\alpha)$ is strictly ergodic. Since $|\Theta|_{d\alpha}=0$, it follows that $|\pi^{-1}(\Theta)|_{d\varphi}=0$. Then 
$\mathcal{R}(\mathsf{E})\setminus \pi^{-1}(\Theta)$ can be parametrized by $\pi_1(\Omega)^*\setminus\Theta.$ This is also a homeomorphism; it follows that 
$\mathcal{R}(\mathsf{E},T_\ell,d\varphi)$ is an ergodic family.
\end{proof}

As a consequence of Theorem \ref{thm:ergodicMeas}, the measure $d\chi$ on $\mathcal{D}(\mathsf{E})$ assigns positive weight to every open set in $\mathcal{D}(\mathsf{E})$. Since $\mathcal{B}$ is a homeomorphism, the invariant measure $d\varphi$ on $\mathcal{R}(\mathsf{E})$ has the same property.

\section{Proof of Theorem \ref{thm:mainThm}}
In this section, we prove Theorem \ref{thm:mainThm}. The argument parallels \cite{VolbergYuditskii2014,DamanikYuditskii2016Adv}; we include it for completeness.
The following result is known, compare \cite[Theorem 1.14]{BLY2} and \cite{FLLZ25}.
\begin{theorem}
    If DCT holds for $\Omega$, then each $\varphi\in\mathcal{R}(\mathsf{E})$ is almost periodic.
\end{theorem}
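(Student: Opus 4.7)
The plan is to transfer the translation flow on $\mathcal{R}(\mathsf{E})$ to a rigid rotation on a compact abelian group, where almost periodicity is automatic, and then pull back through a homeomorphism. When DCT holds, Theorem \ref{thm:JacobiInver} guarantees that the generalized Abel map $\mathcal{A}: \mathcal{D}(\mathsf{E}) \to \pi_1(\Omega)^* \times \T$ is a homeomorphism. Combined with the homeomorphism $\mathcal{B}: \mathcal{S}_A(\mathsf{E}) \to \mathcal{D}(\mathsf{E})$ and the identification between $\mathcal{R}(\mathsf{E})$ and $\mathcal{S}_A(\mathsf{E})$ obtained from the Weyl-disk construction and the reconstruction of $\varphi$ from its Weyl data, the composition $\pi := \mathcal{A} \circ \mathcal{B}: \mathcal{R}(\mathsf{E}) \to \pi_1(\Omega)^* \times \T$ is a homeomorphism onto a compact Hausdorff space.

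By Theorem \ref{thm:linAbel}, translation $T_\ell \varphi = \varphi(\cdot - \ell)$ is conjugated through $\pi$ to the linear flow $(\alpha, \tau) \mapsto (\alpha + \ell \eta, \tau)$ on $\pi_1(\Omega)^* \times \T$, with the $\tau$ coordinate frozen (consistent with the gauge in which $\tau$ is independent of $x$). On the compact abelian topological group $\pi_1(\Omega)^*$, the one-parameter translation $\ell \mapsto \alpha + \ell \eta$ is uniformly almost periodic in the Bohr sense: for any open neighborhood $U$ of the identity, the set of return times $\{\ell \in \bbR : \ell \eta \in U\}$ is relatively dense in $\bbR$. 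This is the standard fact that every continuous one-parameter orbit in a compact topological group is Bohr almost periodic.

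Given $\varepsilon > 0$, I would then invoke uniform continuity of $\pi^{-1}$ on the compact domain $\pi_1(\Omega)^* \times \T$ to produce a neighborhood $U$ of the identity in $\pi_1(\Omega)^*$ such that $\ell \eta \in U$ forces $\pi^{-1}(\alpha + \ell \eta, \tau)$ to be $\varepsilon$-close to $\pi^{-1}(\alpha, \tau) = \varphi$ in the topology of $\mathcal{R}(\mathsf{E})$. Combined with the relative density of $\{\ell : \ell \eta \in U\}$, this yields relative density of $\{\ell : \|T_\ell \varphi - \varphi\| < \varepsilon\}$, which is exactly the definition of uniform almost periodicity.

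The main obstacle is that the topology on $\mathcal{R}(\mathsf{E})$ in which $\pi$ is declared a homeomorphism is the strong resolvent topology, whereas uniform almost periodicity requires control in the $\|\cdot\|_\infty$ norm on the potential. Bridging this gap is the substantive content carried by the Widom and DCT hypotheses: the reproducing kernel $\mathscr{K}_D$ together with the product formula \eqref{eq.prodRepRepKer}, the resolvent decomposition \eqref{eq:ResolventDecomp}, and the asymptotic expansion of $R(z)$ at $z = i\infty$ express $\varphi$ as a continuous function of the divisor data into $L^\infty_{\mathrm{loc}}$; in fact, the finite-gap length condition \eqref{eq:finiteGapCond} ensures that the trace-type series recovering $\varphi$ from $D$ converges uniformly in $x$, which promotes the resolvent-topology homeomorphism to a $C_b$-valued homeomorphism. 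This is precisely the upgrade supplied by \cite{BLY2, FLLZ25}, and once it is in hand, the abstract group-translation argument above completes the proof.
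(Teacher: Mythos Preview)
The paper does not give a proof of this statement at all; it simply records it as known and cites \cite[Theorem 1.14]{BLY2} and \cite{FLLZ25}. Your outline is precisely the argument those references carry out: under DCT the composition $\mathcal{A}\circ\mathcal{B}$ is a homeomorphism onto the compact abelian group $\pi_1(\Omega)^*\times\T$, Theorem~\ref{thm:linAbel} conjugates translation to a rigid rotation there, and Bohr almost periodicity on the group pulls back. You also correctly isolate the only real issue---that the homeomorphism is a priori in the resolvent (equivalently, locally-uniform Schur-function) topology rather than in $\|\cdot\|_\infty$ on potentials---and correctly point to \cite{BLY2,FLLZ25} for the upgrade via the trace/reproducing-kernel formulas and the finite gap length condition. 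In short, your proposal agrees with the paper (which defers the statement entirely to those references) and with the underlying argument behind them.
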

    
We now consider the case when DCT fails for $\Omega.$
The proof can be accomplished by three steps, each of which depends on the previous step. 
\begin{lemma}\label{lem:oneBadPotential}
    If DCT fails on $\Omega$, then there exists a $\varphi\in\mathcal{R}(\mathsf{E})$ which is not almost periodic.
\end{lemma}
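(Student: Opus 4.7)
The plan is to argue by contradiction: assume every $\varphi \in \mathcal{R}(\mathsf{E})$ is uniformly almost periodic, and then use the non-injectivity of $\pi := \mathcal{A} \circ \mathcal{B}$ (forced by the failure of DCT) to produce a continuous symmetry of $\mathcal{R}(\mathsf{E})$ that is simultaneously non-trivial and forced to be the identity. Since DCT fails, the irregular set $\Theta$ is non-empty, so I pick $\alpha_0 \in \Theta$ together with distinct divisors $D_1 \ne D_2$ satisfying $\mathcal{A}(D_1) = \mathcal{A}(D_2) = (\alpha_0, \tau_0)$; the homeomorphism $\mathcal{B}$ then supplies distinct potentials $\varphi_1, \varphi_2 \in \mathcal{R}(\mathsf{E})$ with $\pi(\varphi_1) = \pi(\varphi_2)$. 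Theorem \ref{thm:linAbel} makes $\tau$ a conserved quantity of the translation flow $T_\ell$, so I may work on the single $\tau$-slice containing $\varphi_1, \varphi_2$ and treat $\pi$ as a continuous surjection $\mathcal{R}(\mathsf{E}) \to \pi_1(\Omega)^*$.

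The next step promotes the hypothesis into rigid structure on $\mathcal{R}(\mathsf{E})$. If every $\varphi$ is uniformly almost periodic, each orbit closure $\omega(\varphi)$ is a compact abelian group carrying a rotation flow, and these orbit closures partition $\mathcal{R}(\mathsf{E})$ into disjoint minimal sets. Ergodicity of $(\mathcal{R}(\mathsf{E}), T_\ell, d\varphi)$ from Corollary \ref{cor:ergodicSys} places $d\varphi$ on a single minimal set, and the positivity of $d\varphi$ on open sets from Theorem \ref{thm:ergodicMeas} forces that set to be dense, hence equal to all of $\mathcal{R}(\mathsf{E})$. Applying the Halmos--von Neumann structure theorem, $\mathcal{R}(\mathsf{E})$ is then identified with a compact abelian group $G$ on which $T_\ell$ acts as rotation by some fixed $\eta_0 \in G$. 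By Theorem \ref{thm:linAbel} and generic position, $\pi: G \to \pi_1(\Omega)^*$ is an equivariant continuous surjection between two minimal rotations on compact abelian groups, and hence affine; after translating base-points appropriately I may take $\pi$ to be a continuous surjective group homomorphism. Because $\varphi_1 \ne \varphi_2$ lie in the same fiber, $K := \ker \pi$ is non-trivial. Fixing any $\psi \in K \setminus \{0\}$, I consider the homeomorphism $\Psi: G \to G$ given by $\Psi(g) := g + \psi$, which commutes with $T_\ell$ and satisfies $\pi \circ \Psi = \pi$.

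The contradiction comes from a density-rigidity argument. Set $V := \pi^{-1}(\pi_1(\Omega)^* \setminus \Theta)$; for $g \in V$ the fiber of $\pi$ over $\pi(g)$ is the singleton $\{g\}$, so $\Psi(g) = g$, giving $\Psi|_V = \mathrm{id}$. The Volberg--Yuditskii theorem gives $|\Theta|_{d\alpha} = 0$, and since the pushforward of $d\varphi$ by $\pi$ is Haar measure on $\pi_1(\Omega)^*$ (Corollary \ref{cor:ergodicSys}), $V$ has full $d\varphi$-measure; combined with positivity of $d\varphi$ on open sets, $V$ is dense in $G$. Continuity of $\Psi$ then upgrades $\Psi|_V = \mathrm{id}$ to $\Psi = \mathrm{id}_G$, contradicting $\psi \ne 0$. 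The step I expect to be most delicate is identifying $\mathcal{R}(\mathsf{E})$ as a compact abelian group and $\pi$ as an honest group homomorphism: the infinite-dimensional Halmos--von Neumann reduction requires care, though the equicontinuity it needs is already built into uniform almost periodicity and the minimality established above. The rest is a faithful transposition of Volberg--Yuditskii's rigidity idea to the Dirac setting.
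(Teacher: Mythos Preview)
Your proof is correct but takes a more structural route than the paper's. The paper argues locally: picking distinct $\varphi_1,\varphi_2$ over an irregular character $\alpha$, it assumes only that \emph{both of these} are almost periodic, chooses a regular $\beta$ and a sequence $\ell_j$ with $\alpha+\ell_j\eta\to\beta$, and uses precompactness of the translates of each $\varphi_i$ to extract a common subsequence along which $T_{\ell_{n_k}}\varphi_1$ and $T_{\ell_{n_k}}\varphi_2$ both converge uniformly; continuity of $\pi$ together with uniqueness of the fiber over $\beta$ forces both limits to equal $\varphi_\beta$, whence $\|\varphi_1-\varphi_2\|_\infty=\|T_{\ell_{n_k}}\varphi_1-T_{\ell_{n_k}}\varphi_2\|_\infty\to 0$, a contradiction. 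This needs nothing beyond precompactness and the triangle inequality, and it actually pinpoints a non-almost-periodic potential inside the specific pair $\{\varphi_1,\varphi_2\}$. Your approach instead assumes \emph{every} $\varphi\in\mathcal{R}(\mathsf{E})$ is almost periodic, upgrades $\mathcal{R}(\mathsf{E})$ to a compact abelian group via minimality plus Halmos--von Neumann, and realizes $\pi$ as a surjective group homomorphism; the contradiction is then the clean structural fact that a non-trivial kernel forces every fiber to be a non-trivial coset, which is incompatible with the existence (indeed full Haar measure) of singleton fibers over regular characters---your $\Psi$-argument is a slightly roundabout way of saying exactly this. Your route gives a more conceptual picture of the obstruction, at the price of the Halmos--von Neumann reduction and the compatibility of the uniform and strong-resolvent topologies on the hull, which you rightly flag as the delicate step and which does go through since uniform convergence of bounded potentials implies strong resolvent convergence and the hull is already compact.
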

\begin{proof}
    Since DCT fails for $\Omega$, there exists $\alpha\in\pi_1(\Omega)^*$ for which there exist at least two elements $D_1,D_2\in\mathcal{D}(\mathsf{E})$ in  $\mathcal{A}^{-1}((\alpha,\tau))$. Let $\varphi_1,\varphi_2$ be the potentials associated to $D_1,D_2$. Then at least one of them is not almost periodic. Assume that both of them are almost periodic, pick a  regular $\beta\in\pi_1(\Omega)^*$
    for which there exists a unique $D\in\mathcal{D}(\mathsf{E})$ such that $\mathcal{A}(D)=(\beta,\tau)$. Let $\varphi_\beta$ be the potential associated with $D.$ Since the frequencies of $\mathsf{E}$ are generic and $\mathcal{A}_c$ conjugates $T_\ell\varphi(\cdot)=\varphi(\cdot-\ell)$ into a linear flow in $\pi_1(\omega)^*$ which is minimal, there exists a sequence $\ell_j$ such that $\alpha+\ell_j\eta\to\beta$ as $\ell_j\to\infty.$ Almost periodicity of $\varphi_1,\varphi_2$ implies the existence of a subsequence $\{\ell_{n_k}\}$of $\{\ell_j\}$ such that 
    $$\lim\limits_{k\to\infty}\Vert T_{\ell_{n_k}}\varphi_1-\varphi_\beta\Vert_\infty=\lim\limits_{k\to\infty}\Vert T_{\ell_{n_k}}\varphi_2-\varphi_\beta\Vert_\infty=0.$$
    This is impossible since
    $$\begin{aligned}
    0&<\Vert\varphi_1-\varphi_2\Vert_\infty= \Vert T_{\ell_{n_k}}\varphi_1-T_{\ell_{n_k}}\varphi_2\Vert_\infty\\
    &\leq\Vert T_{\ell_{n_k}}\varphi_1-\varphi_\beta\Vert_\infty+\Vert T_{\ell_{n_k}}\varphi_2-\varphi_\beta\Vert_\infty\\
    &=0\quad \text{as}\quad k\to\infty.
    \end{aligned}$$
\end{proof}

\begin{lemma}\label{lem:fullMeasBadPotential}
    There exists a full measure subset $V\subset\mathcal{R}(\mathsf{E})$ such that every $\varphi\in V$ is not almost periodic.
\end{lemma}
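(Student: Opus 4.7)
Let $V=\{\varphi\in\mathcal{R}(\mathsf{E}):\varphi\text{ is not uniformly almost periodic}\}$. The plan is to upgrade the single non-AP potential produced by Lemma \ref{lem:oneBadPotential} to a set of full $d\varphi$-measure by combining ergodicity with an openness argument. Since translation preserves uniform almost periodicity, $V$ is $T_\ell$-invariant for every $\ell\in\bbR$. By Corollary \ref{cor:ergodicSys}, the system $(\mathcal{R}(\mathsf{E}),T_\ell,d\varphi)$ is ergodic, so $d\varphi(V)\in\{0,1\}$. It therefore suffices to exhibit a non-empty open subset of $V$, because by the discussion following Theorem \ref{thm:ergodicMeas} the invariant measure $d\varphi$ assigns positive weight to every non-empty open set in $\mathcal{R}(\mathsf{E})$.

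The key step is to verify that the set $A=\mathcal{R}(\mathsf{E})\setminus V$ of uniformly almost periodic potentials is closed in $\mathcal{R}(\mathsf{E})$, so that $V$ itself is open. Via the homeomorphism $\mathcal{B}:\mathcal{R}(\mathsf{E})\to\mathcal{D}(\mathsf{E})$, this reduces to a statement about divisors: one must check that convergence in the product topology on $\prod_j\mathcal{I}_j$ forces the corresponding potentials to converge uniformly on all of $\bbR$. I expect this uniform control to follow from the product representation \eqref{eq:resolventProd} for $R$, the reconstruction of $\varphi$ from the Schur function $s_+$, and the finite gap length condition \eqref{eq:finiteGapCond}, which together give uniform estimates and equicontinuity for the potentials parametrized by divisors. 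Once this is in place, the classical fact that uniform limits of Bohr almost periodic functions are almost periodic yields $A$ closed and hence $V$ open.

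With $V$ both open and non-empty (by Lemma \ref{lem:oneBadPotential}), the positive-measure property of open sets combined with ergodicity delivers $d\varphi(V)=1$, completing the proof. The main obstacle I anticipate is precisely the topological step: rigorously bridging from convergence in the divisor product topology to uniform convergence of the potentials on $\bbR$, rather than merely on compact subsets. Once that bridge is built, the measure-theoretic dichotomy and Lemma \ref{lem:oneBadPotential} do the rest automatically, paralleling the strategy of \cite{VolbergYuditskii2014,DamanikYuditskii2016Adv} adapted to the Dirac setting.
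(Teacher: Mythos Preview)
Your approach has a genuine and fatal gap: the step asserting that convergence in the divisor (product) topology forces \emph{uniform} convergence of the corresponding potentials on all of $\bbR$ is not merely unproven, it is false under the hypothesis that DCT fails. Indeed, if it held, the map $\mathcal{D}(\sfE)\to (C_b(\bbR),\Vert\cdot\Vert_\infty)$, $D\mapsto\varphi_D$, would be continuous from a compact space, so $\mathcal{R}(\sfE)$ would be compact in the uniform topology. Since $\mathcal{R}(\sfE)$ is $T_\ell$-invariant, every orbit closure $\overline{\{T_\ell\varphi:\ell\in\bbR\}}^{\,\Vert\cdot\Vert_\infty}$ would then be compact, and Bochner's criterion would force every $\varphi\in\mathcal{R}(\sfE)$ to be almost periodic---directly contradicting Lemma~\ref{lem:oneBadPotential}. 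So the very lemma you invoke to make $V$ non-empty rules out the openness of $V$ that your argument needs.

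The paper's proof sidesteps this by never claiming $V$ is open. Instead it builds $V=\bigcap_n\bigcup_{\ell}T_\ell O_n$ from a shrinking basis $O_n\downarrow\{\varphi_0\}$ and uses ergodicity only to give each $\bigcup_\ell T_\ell O_n$ full measure. The contradiction for a hypothetical almost periodic $\varphi'\in V$ then exploits almost periodicity of $\varphi'$ itself: Bochner compactness of its hull yields a subsequence $T_{-\ell_{n_k}}\varphi'$ converging \emph{uniformly} to some $\psi$, while by construction the same sequence converges to $\varphi_0$ in the (weaker) topology of $\mathcal{R}(\sfE)$; uniqueness of limits gives $\psi=\varphi_0$, making $\varphi_0$ almost periodic, a contradiction. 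The uniform-topology compactness is supplied by the assumed almost periodicity of $\varphi'$, not by any property of the divisor topology.
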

\begin{proof}
    Let $\varphi_0$ be the potential obtained in Lemma \ref{lem:oneBadPotential}. Choose a decreasing sequence of open sets $O_n\subset\mathcal{R}(\mathsf{E})$ such that $\{\varphi_0\}=\cap_n O_n$ for all $n$ and $O_{n+1}\subset O_n$. Let $\nu$ be the ergodic measure with respect to $T_\ell$ in $\mathcal{R}(\mathsf{E})$ which assigns positive measure to open sets. Let $$V_n=\cup_{\ell\in\bbR}T_\ell O_n.$$
    Since $\nu(O_n)>0$ and $\nu$ is ergodic with respect to $T_\ell$, $\nu$ gives $V_n$ full measure. Let \begin{equation}\label{eq:fullMeasuChar}V=\cap_nV_n.\end{equation}
    Then $V$ also has full $\nu$ measure. Let's show that every $\varphi\in V$ is not almost periodic.

    Suppose there exists an almost periodic $\varphi'\in V$,  then there exists $\ell_n$ such that $\varphi'\in T_{\ell_n}O_n$ for all $n.$ In particular, by the construction of $O_n$, $$\varphi_0\in\overline{\{T_{-\ell_n}\varphi'\}}.$$
    Thus $\varphi_0$ is a uniform limit of translations of $\varphi'$, and hence $\varphi_0$ is also almost periodic. This contradicts our assumption.
    
\end{proof}
\begin{lemma}
    No $\varphi\in\mathcal{R}(\mathsf{E})$ is  almost periodic.
\end{lemma}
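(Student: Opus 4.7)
The plan is to argue by contradiction: assume some $\varphi_0\in\mathcal{R}(\mathsf{E})$ is uniformly almost periodic, and turn this hypothesis into almost periodicity of the potential $\varphi_\beta$ associated to every regular character $\beta\in\pi_1(\Omega)^*\setminus\Theta$. Since the set of such $\varphi_\beta$ has full $d\varphi$-measure in $\mathcal{R}(\mathsf{E})$, it must intersect the full-measure set $V$ of non-almost-periodic potentials from Lemma \ref{lem:fullMeasBadPotential}, yielding the desired contradiction.

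The first step is to form the uniform orbit closure $\omega_u(\varphi_0):=\overline{\{T_\ell\varphi_0:\ell\in\bbR\}}^{\Vert\cdot\Vert_\infty}$. By the Bohr characterization, almost periodicity of $\varphi_0$ makes $\omega_u(\varphi_0)$ compact in the uniform topology, and since uniform limits and translates of almost periodic functions are themselves almost periodic, every element of $\omega_u(\varphi_0)$ is almost periodic. Because uniform convergence of potentials implies strong resolvent convergence and $\mathcal{R}(\mathsf{E})$ is closed in the strong resolvent topology (being homeomorphic to the compact space $\mathcal{D}(\mathsf{E})$), one has $\omega_u(\varphi_0)\subseteq\mathcal{R}(\mathsf{E})$.

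The second step is to pair this with the linearization of Theorem \ref{thm:linAbel}. With $(\alpha_0,\tau)=\mathcal{A}\circ\mathcal{B}(\varphi_0)$ we have $\mathcal{A}\circ\mathcal{B}(T_\ell\varphi_0)=(\alpha_0+\ell\eta,\tau)$, and the generic position hypothesis makes the rotation $\ell\mapsto\alpha_0+\ell\eta$ minimal on $\pi_1(\Omega)^*$. Fixing any regular $\beta$ with unique preimage $\varphi_\beta$, I would choose $\ell_n\to\infty$ with $\alpha_0+\ell_n\eta\to\beta$, then use compactness of $\omega_u(\varphi_0)$ to extract a subsequence along which $T_{\ell_n}\varphi_0$ converges uniformly to some $\psi\in\omega_u(\varphi_0)$. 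Continuity of $\mathcal{A}\circ\mathcal{B}$ in the strong resolvent topology, combined with the fact that uniform convergence implies strong resolvent convergence, forces $\mathcal{A}\circ\mathcal{B}(\psi)=(\beta,\tau)$, and regularity of $\beta$ pins $\psi=\varphi_\beta$. Thus $\varphi_\beta\in\omega_u(\varphi_0)$ is almost periodic.

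The last step is the measure-theoretic contradiction. Combining the Volberg--Yuditskii bound $|\Theta|_{d\alpha}=0$ with Corollary \ref{cor:ergodicSys} shows that $\{\varphi_\beta:\beta\in\pi_1(\Omega)^*\setminus\Theta\}$ carries full $d\varphi$-measure in $\mathcal{R}(\mathsf{E})$, so it meets the full-measure set $V\subseteq\mathcal{R}(\mathsf{E})$ from Lemma \ref{lem:fullMeasBadPotential}. Any $\varphi_\beta$ in the intersection is simultaneously almost periodic and not almost periodic, which is the contradiction. The step I expect to require the most care is the topology upgrade in the second step: the Abel flow is only minimal in the (coarser) strong resolvent topology, so to land inside $\omega_u(\varphi_0)$ one must exploit almost periodicity of $\varphi_0$ to pass from a strong-resolvent limit to a genuine uniform limit. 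This is precisely where the hypothesized almost periodicity of $\varphi_0$ does the work, and removing it breaks the whole argument.
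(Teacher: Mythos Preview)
Your argument is correct and follows essentially the same route as the paper: assume an almost periodic $\varphi_0$, use Bohr compactness of its uniform hull together with minimality of the linearized flow and regularity of $\beta$ to identify the uniform limit with the unique $\varphi_\beta$, and then collide the full-measure set of such $\varphi_\beta$ with the full-measure set $V$ from Lemma~\ref{lem:fullMeasBadPotential}. Your treatment of the ``topology upgrade'' (passing from convergence of characters to uniform convergence of potentials via compactness of the hull and continuity of $\mathcal{A}\circ\mathcal{B}$) is in fact more explicit than the paper's, which simply asserts the existence of a uniformly convergent subsequence landing on $\varphi'$.
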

\begin{proof}
    Suppose there exists a $\varphi\in\mathcal{R}(\mathsf{E})$ that is almost periodic. Then any $\varphi'\in\overline{\cup_{\ell\in\bbR}\{T_\ell\varphi\}}$ is also almost periodic. Let $\alpha$ be the associated characters of $\varphi$, by minimality, for any $\beta\in\pi_1(\Omega)^*$ there exists a sequence $\{\ell_j\}$ such that
    $$\alpha+\ell_j\eta\to\beta$$
    as $j\to\infty.$
    Let us choose $\beta$ to be in the intersection of characters associated to potentials in $V$ in \eqref{eq:fullMeasuChar} with good characters. This is a full measure subset of $\pi_1(\Omega)^*.$ It follows that $\varphi'$ is the unique potential associated to $\beta$ and that $\varphi'$ is not almost periodic. This is a contradiction, since almost periodicity of $\varphi$ implies that  there exists $\{\ell_{j_k}\}$ such that 
    $$\Vert T_{\ell_{j_k}}\varphi-\varphi'\Vert_\infty\to 0$$ as $k\to\infty.$ Therefore, no $\varphi\in\mathcal{R}(\mathsf{E})$ is almost periodic.
\end{proof}
Combining the three steps, when DCT fails on $\Omega$, no $\varphi \in \mathcal{R}(\mathsf{E})$ is almost periodic. Together with the known implication in the DCT case, this proves the dichotomy in Theorem~\ref{thm:mainThm}

\section{Counter Examples to the Kotani--Last Conjecture}\label{sec6}
Yuditskii constructed an example of a weakly homogeneous set $\sfE\subset \bbR$ for which DCT fails \cite{Yuditskii2011DCTFails}. By a result of Poltoratski--Remling \cite[Corollary 2.3]{PoltoratskiRemling2009CMP}, weak homogeneity implies the absence of singular components of the reflectionless measure. Therefore, we have the following:
\begin{coro}\label{coro:counterEx}
    There exists an unbounded closed $\sfE\subset \bbR$ such that DCT fails on $\Omega=\bbC\setminus\sfE$, the spectrum of $\Lambda_\varphi,\varphi\in \mathcal{R}(\sfE)$ is purely absolutely continuous and none of $\varphi\in\mathcal{R}(\sfE)$ is almost periodic.
\end{coro}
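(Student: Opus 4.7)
The strategy is to package together three external inputs and our dichotomy. First, invoke Yuditskii's construction \cite{Yuditskii2011DCTFails} to produce a closed unbounded set $\sfE\subset\bbR$ whose complement $\Omega=\bbC\setminus\sfE$ is a Parreau--Widom domain that is weakly homogeneous but for which DCT \emph{fails}. In particular, $\Omega$ is Dirichlet regular, satisfies the Widom condition \eqref{eq:WidomCond}, and (after a small bookkeeping check of the example in \cite[Section 6]{Yuditskii2011DCTFails}) the finite gap length condition \eqref{eq:finiteGapCond}.

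Second, fix any $\varphi\in\mathcal{R}(\sfE)$. By Definition, $\varphi$ is reflectionless on $\sfE$. Since $\sfE$ is weakly homogeneous, Theorem \ref{thm:weakHomoAc} guarantees that the spectral measure of $\Lambda_\varphi$ has no singular part on $\sfE$, so the spectrum is purely absolutely continuous on $\sfE$. This handles the spectral-type part of the conclusion and confirms that $\mathcal{R}(\sfE)$ is non-empty with all its elements carrying purely a.c.\ spectrum.

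Third, apply part (2) of Theorem \ref{thm:mainThm}. The hypotheses of that theorem (Widom type plus finite gap length) are verified in the first step, and DCT fails on $\Omega$ by construction; the generic position hypothesis (Definition \ref{def:genericPos}) may be arranged by a small translation/dilation of the gap endpoints in Yuditskii's example without destroying weak homogeneity, Widom type, finite total gap length, or the failure of DCT, since each of these properties is stable under such perturbations (the former three being quantitative geometric conditions on gap sizes and positions, and DCT failure being preserved because it is inherited from the presence of irregular characters, a property which persists under small deformations of the spectrum). Theorem \ref{thm:mainThm} then yields that no $\varphi\in\mathcal{R}(\sfE)$ is uniformly almost periodic.

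Combining the three steps completes the proof. The main obstacle is the third step: one has to be confident that Yuditskii's explicit example (or a mild perturbation thereof) simultaneously realizes \emph{all} the geometric hypotheses of Theorem \ref{thm:mainThm}, in particular the generic position condition on the harmonic frequencies $\eta$, which is not automatic from weak homogeneity or Widomness but follows generically after perturbation of gap endpoints.
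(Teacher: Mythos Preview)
Your overall strategy matches the paper's: combine Yuditskii's weakly homogeneous set on which DCT fails with Poltoratski--Remling (Theorem~\ref{thm:weakHomoAc}) for pure a.c.\ spectrum, and then invoke the dichotomy of Theorem~\ref{thm:mainThm}. However, there is a concrete error in your first step.

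You assert that the finite gap length condition \eqref{eq:finiteGapCond} holds for Yuditskii's example ``after a small bookkeeping check.'' This is false. In the construction of \cite{Yuditskii2011DCTFails} the set is $\sfE=[0,+\infty)\cup\bigcup_{k\ge1}I_k$ with $I_k$ centered at $-k\pi$ of length $e^{-\sqrt{k\pi}}/\sqrt{k\pi}$, so the gap between $I_k$ and $I_{k+1}$ has length approximately $\pi$, and $\sum_j(b_j-a_j)=\infty$. The paper handles this not by a bookkeeping check but by constructing a conformal map $\Phi:\bbC\setminus\sfE\to\bbC\setminus\widetilde{\sfE}$ onto a domain whose boundary $\widetilde{\sfE}$ does satisfy \eqref{eq:finiteGapCond}, using that the Widom condition and DCT are conformally invariant (see the Remark following the corollary and Appendix~\ref{A}). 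You need this additional step; without it Theorem~\ref{thm:mainThm} does not apply. (Incidentally, the weakly homogeneous example with DCT failing is in \cite[Section 5]{Yuditskii2011DCTFails}; Section~6 there treats the opposite situation, DCT holding without homogeneity.)

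A second, softer issue: your argument for arranging generic position by perturbing gap endpoints is not justified. In particular, the assertion that failure of DCT ``persists under small deformations of the spectrum'' is not a standard fact and would require proof; the set of irregular characters depends delicately on the conformal structure of $\Omega$. The paper does not address generic position explicitly here either, so this is arguably a shared loose end, but your proposed justification does not hold up as written.
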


\begin{remark}
In \cite{Yuditskii2011DCTFails}, the original construction produced a set $\sfE=\cup_{k\geq 1}I_k\cup [0,+\infty),$ such that each interval $I_k$ is centered at $-k\pi$ and has length $e^{-\sqrt{k\pi}}/\sqrt{k\pi}$. This set does not satisfy the final gap length condition. However, this is only a technical issue: one can find a conformal map sending $\bbC\setminus\sfE$ to $\bbC\setminus\tilde{\sfE}$ where $\tilde{\sfE}$ is unbounded from above and below and does satisfy the finite gap length condition. Since both the Widom condition and DCT are invariant under conformal mappings, the existence of such a set is guaranteed. See Appendix \ref{A} for a program.
\end{remark}

The preceding counterexample is somewhat mild in that the resulting potential develops discontinuities. We therefore seek a construction of a different nature: a weakly mixing Dirac operator with purely absolutely continuous spectrum. Weak mixing is strongly incompatible with almost periodicity, whereas weaker notions (e.g. Stepanov almost periodicity) may fail to detect discontinuities. The remainder of this section is devoted to such a construction.
Our approach generalizes Avila's argument \cite{Avila2015JAMS} from the continuous Schr\"odinger setting to Dirac operators. While most of the arguments remain the same, special care is needed in the treatment of the model-dependent part. This also illustrates the breadth of Avila’s scheme for continuous models.

We consider the Dirac operator taking the following form:
\begin{equation}
L_\varphi =  \begin{bmatrix}
	0 & -1 \\ 1 & 0
\end{bmatrix}	 \frac{d}{dx} - \begin{bmatrix} \Re \varphi(x) & \Im \varphi(x) \\  \Im \varphi(x) &  -\Re \varphi(x) \end{bmatrix}.
\end{equation}
The operator $L_\varphi$ is conjugate to $\Lambda_\varphi$ by the unitary pointwise Cayley transform, so we will continue to write $\Lambda_\varphi$ to denote the operator and identify the real-valued potentials $(\Re\varphi,\Im\varphi)$ of $L_\varphi$ with the complex-valued potential $\varphi$ of $\Lambda_\varphi$ through the standard identification  $\bbR^2$ with $\bbC$. 
The transfer matrix of the eigenvalue equation $\Lambda_\varphi U=zU$ then belongs to $SL(2,\bbR)$, enabling us to utilize the analysis of $SL(2,\bbR)$ action on the upper half plane that has already been understood. 
\begin{theorem}\label{thm:Main2}
There exists a weakly mixing flow $F_t$ on a solenoid $S$ and a non-constant continuous function $\Psi:S\to\bbC$ such that for the potential $\varphi_x(t)=\Psi(F_t(x))$, the Dirac operator $\Lambda_{\varphi_x}$ has purely absolutely continuous spectrum for every $x.$
\end{theorem}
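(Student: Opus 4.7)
The plan is to follow Avila's scheme in \cite{Avila2015JAMS} and verify each ingredient in the Dirac setting. The weakly mixing flow $F_t$ on the solenoid $S$ will be built as an inverse limit of rotations on circles $\bbR/T_n\bbZ$ with periods $T_n$ chosen to grow rapidly along a Liouvillian subsequence, and the sampling function $\Psi:S\to\bbC$ as a uniform limit of $T_n$-periodic Dirac potentials $\Psi_n$. If the Liouville scaling and the $L^\infty$ perturbations $\|\Psi_{n+1}-\Psi_n\|_\infty$ are chosen compatibly (for instance summable in $n$), the limit $(F_t,\Psi)$ is weakly mixing in the base and continuous, non-constant in the sampling function, exactly as in \cite{Avila2015JAMS}.

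The iteration is driven by a slow deformation procedure. Starting from a periodic $\Psi_0$ whose Dirac operator has purely absolutely continuous spectrum with many open gaps, one builds $\Psi_{n+1}$ from $\Psi_n$ by (i) extending the period to $T_{n+1}=q_nT_n$ for a large integer $q_n$, and (ii) perturbing along a path in potential space that keeps the Lebesgue measure of the spectrum within error $2^{-n}$ of that of $\Lambda_{\Psi_n}$, while opening sufficiently many new gaps to feed the next step. The analytic control of this path uses the $\SL(2,\bbR)$ formalism of Section~\ref{sec2}: for $E$ in the interior of a band of $\Lambda_{\Psi_n}$, the monodromy matrix $A(E)=\mathscr{U}(E,T_n)$ is elliptic, and the fixed point $\mathbf{u}[A(E)]\in\mathbb{H}$ together with the angle $\mathbf{\Theta}[A(E)]$ depend analytically on $E$ and on the potential. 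A perturbation of $\Psi_n$ induces a controlled motion of $\mathbf{u}[A(E)]$ in $\mathbb{H}$, which one converts, via Lemma~\ref{lem.rotDerivative}, into quantitative control on band edges and gap widths.

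The main obstacle, relative to the continuous Schr\"odinger case, is precisely establishing Lemma~\ref{lem.rotDerivative}: the monotonicity property for the rotation number of the periodic Dirac operator along the deformation. The expected route is a direct computation with the eigenvalue equation $\Lambda_\varphi U=EU$. Differentiating $\mathscr{U}(E,x)$ in $E$ and using $\partial_\mu \mathcal{U}\, j=\mathcal{U}(iE\mathcal{P}-\mathcal{Q})$ from \eqref{eq:ArovDiff}, one obtains an integral representation for $\partial_E\mathbf{\Theta}[\mathscr{U}(E,T_n)]$ whose sign is controlled by the positivity $\mathcal{P}\ge 0$, in direct analogy with the role played by the identity potential term in the Schr\"odinger case. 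The same strategy, applied to a one-parameter family of potentials, yields the required monotonicity in the deformation parameter and shows that perturbations in a prescribed direction translate uniformly into motions of $\mathbf{\Theta}[A(E)]$.

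With Lemma~\ref{lem.rotDerivative} in hand, the rest of Avila's argument transfers with only cosmetic changes. The quantitative control on the measure of the spectrum, on the set where the Lyapunov exponent vanishes, and on the Weyl disks \eqref{eq.WeylDisk}, follows from the same Kotani-type estimates available for Dirac operators via \cite{BLY1}. The ergodic-theoretic construction of the weakly mixing solenoid is model-independent. At the limit, purely absolutely continuous spectrum holds for a generic base point $x\in S$; minimality of $F_t$ together with continuity of the spectral data in $\Psi$ then upgrades this to every $x\in S$, completing the construction of the desired continuous sampling of a weakly mixing flow whose associated Dirac operator has purely absolutely continuous spectrum.
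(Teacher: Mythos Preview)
Your proposal has the right high-level template (an inverse limit of periodic models \`a la Avila), but it misidentifies the actual mechanism in three places, and these are not cosmetic.

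First, the inductive step in the paper is not ``perturbing along a path in potential space that keeps the Lebesgue measure of the spectrum within error $2^{-n}$ while opening sufficiently many new gaps.'' The engine is the $(\delta,n)$-padding: one takes $n$ copies of the period, then $n$ more copies each followed by a short zero segment of length $\delta$. No gaps are deliberately opened; instead the key stability statement is Lemma~\ref{lem:stableUniform}, which says padding preserves the $(\epsilon,C,M)$-uniform bound on the density $d\mu_{\varphi_s}/d\lambda$. This is what survives the limit and forces absolute continuity. Your ``open new gaps / control band edges'' picture is a different construction and you have not indicated how to carry it out for Dirac.

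Second, you have misread Lemma~\ref{lem.rotDerivative}. It asserts $\partial_\lambda\mathbf{\Theta}[T[\varphi](\lambda,t,s)]<0$, i.e.\ monotonicity in the \emph{spectral parameter}, not in a deformation parameter. Its role is to feed the hypothesis $d\tilde\theta/d\lambda\neq 0$ of the equidistribution Lemma~\ref{lem:equidistributionLem}, so that for the padded potential most $\lambda$ in a band still have elliptic monodromy with fixed point $\mathbf{u}$ close to the original; this is exactly what makes Lemma~\ref{lem:stableUniform} go through. There is no place in the argument where one needs monotonicity of $\mathbf{\Theta}$ along a family of potentials, and your sentence ``the same strategy, applied to a one-parameter family of potentials, yields the required monotonicity in the deformation parameter'' is both unnecessary and unproved.

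Third, your endgame ``a.c.\ for generic $x$, then minimality plus continuity upgrades to every $x$'' does not work: spectral type is not continuous under strong resolvent convergence, so minimality cannot propagate pure a.c.\ from a generic orbit to all orbits. The paper instead proves, for every $x$, the inequality
\[
\mu_{\varphi_x}(-\infty,2^j]-\int_{-\infty}^{2^j}\frac{d\mu_{\varphi_x}}{d\lambda}\,d\lambda
\le \liminf_{k\to\infty}\Bigl(\mu_{\varphi^{(k)}}(-\infty,2^j]-\mu_{\varphi^{(k)},C_j}(-\infty,2^j]\Bigr)<2^{-j},
\]
using only weak convergence of spectral measures (Lemma~\ref{lem:spectralCont}) and the $(2^{-j},C_j,2^j)$-uniformity propagated through the padding. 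Letting $j\to\infty$ kills the singular part directly.
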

It is well known that mixing is incompatible with recurrence and almost periodicity. 
\subsection{Time Change of Solenoidal Flows}
Let $K=\bbZ/n\bbZ$ for some positive integer $n$, and let $i:\bbZ\to K$ be the natural embedding. Set $S=K\times \bbR$, with points denoted $(x,s)\in S$. The {\it solenoidal flow} on $S$ is $F_t^S(x,s)=(x,s+t)$. 
\begin{definition}
    A {\it time change} of $F^S$ is a flow $F_t:S\to S$ such that $F_t(x,s)=(x,s+h(x,s,t))$ for some $C^1$ $t\mapsto h(x,s,t)$ for every $x,s$ and $w_F(x,s)=\frac{d}{dt}h(x,s,t)\vert_{t=0}$ is a continuous positive function of $(x,s)$. In this way, any continuous positive function in $S$ generates a time change.
\end{definition}
Let $S,S'$ be given and $p_{S,S'}:S\to S'$ be the natural projection. Let $F^S,F^{S'}$ be the solenoidal flow and let $F,F'$ be their respective time change. We say $F$ is $\kappa$ close to the lift of $F'$ if 
\[|\ln w_{F'}\circ p_{S,S'}-\ln w_{F}|<\kappa.\]
We say $\Psi:S\to\bbC$ is $\kappa$ close to the lift of $\Psi':S'\to\bbC$ if \[|\Psi'\circ p_{S,S'}-\Psi|<\kappa.\]
\begin{definition}
    Let $(K_n)_{n\geq 1}$ be an increasing sequence of compact Abelian groups with surjective homomorphisms (projections) $p_{n,n'}:K_n\to K_{n'}$ for $n>n'$ satisfying $p_{n_2,n_3}\circ p_{n_1,n_2}=p_{n_1,n_3}$, $n_1>n_2>n_3$.
    There exists a compact Abelian group $K$ together with projections $p_n:K\to K_n$ such that for all $n>n'$ we have $p_{n'}=p_{n,n'}\circ p_n,$ and $p_j$ separates points in $K$. That is, for any $x\neq y\in K$, there exists at least one index $j$ such that $p_j(x)\neq p_j(y)$. One may think of elements in $K$ as $x=(x_j)$ with $x_j\in K_j$ and $p_j(x)=x_j$ and endow $K$ with the product topology and obvious group structure. Such $K$ is called the {\it projective limit}.
\end{definition}
In the current context, we will not distinguish $p_{n,n'}$ from $P_{K_n,K_{n'}}$. The following result will be needed.
\begin{lemma}[\cite{Avila2015JAMS}]
Let $S_n$ be an increasing sequence of solenoids with projective limit $S$. Let $\epsilon_n$ be a sequence of positive numbers converging to zero. Let $F_n$ be the time changes of the solenoidal flow $F^{S_n}$, and let $\Psi_n:S_n\to\bbC$ be continuous. Assume that for any $n>n'$, $(F_{n},\Psi_n)$ is $\epsilon_{n'}$ close to the lift of $(F_{n'},\Psi_{n'})$. Then there exists a time change $F$ of $F^S$ and a continuous function $\Psi:S\to\bbC$ such that $(F,\Psi)$ is $\epsilon_n$ close to the lift of $(F_n,\Psi_n)$ for every $n.$
\end{lemma}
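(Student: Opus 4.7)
The plan is to realize $(F,\Psi)$ as the uniform limits of the lifts $\ln w_{F_n}\circ p_n$ and $\Psi_n\circ p_n$ on the projective limit $S$, via a Cauchy-completion argument. The key observation is that, using the tower relation $p_{n'}=p_{n,n'}\circ p_n$, the hypothesis that $(F_n,\Psi_n)$ is $\epsilon_{n'}$-close to the lift of $(F_{n'},\Psi_{n'})$ pulls back under $p_n$ to
\[
\bigl|\ln w_{F_n}\circ p_n-\ln w_{F_{n'}}\circ p_{n'}\bigr|<\epsilon_{n'},\qquad \bigl|\Psi_n\circ p_n-\Psi_{n'}\circ p_{n'}\bigr|<\epsilon_{n'}
\]
uniformly on $S$ whenever $n>n'$. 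Since $\epsilon_{n'}\to 0$, the sequences $(\ln w_{F_n}\circ p_n)_n\subset C(S,\bbR)$ and $(\Psi_n\circ p_n)_n\subset C(S,\bbC)$ are uniformly Cauchy, and so converge uniformly to continuous functions $g$ and $\Psi$ on $S$.

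Next, I would set $w_F:=e^g$, which is continuous and strictly positive on $S=K\times\bbR$, and define $F$ as the time change of the solenoidal flow $F^S$ whose infinitesimal generator in the fiber direction is $w_F$; positivity and continuity ensure that the ODE $\dot s=w_F(x,s)$ integrates to a well-defined global flow on $S$. The required closeness now follows by letting $m\to\infty$ in the Cauchy estimate: for each fixed $n$,
\[
\bigl|g-\ln w_{F_n}\circ p_n\bigr|\le\epsilon_n,\qquad \bigl|\Psi-\Psi_n\circ p_n\bigr|\le\epsilon_n
\]
uniformly on $S$. If the definition of closeness requires strict inequality $<\epsilon_n$, one may either replace the input sequence $(\epsilon_n)$ by $(\epsilon_n/2)$ at the outset, or observe that the Cauchy estimate is already strict for all $m$ sufficiently large past $n$.

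I do not anticipate a substantive obstacle: the construction is essentially a completion argument, and the only points requiring minor care are (i) verifying that the uniform limits $g$ and $\Psi$ are indeed continuous with respect to the projective-limit topology on $S$, which is automatic since each $\ln w_{F_n}\circ p_n$ is continuous and factors through $p_n$, and uniform limits of continuous functions are continuous; and (ii) checking that the flow generated by $w_F$ is a bona fide time change of $F^S$ rather than merely a formal object, which again reduces to the positivity and continuity guaranteed by uniform convergence.
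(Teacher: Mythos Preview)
The paper does not supply its own proof of this lemma; it is quoted from \cite{Avila2015JAMS} and used as a black box. Your argument is correct and is the standard one: pull the closeness estimates back to $S$ via the tower relation $p_{n'}=p_{n,n'}\circ p_n$, obtain uniformly Cauchy sequences in $C(S,\bbR)$ and $C(S,\bbC)$, and build $(F,\Psi)$ from the limits. The only point to tighten is the strict-versus-nonstrict inequality at the end: passing to the limit in $|\ln w_{F_m}\circ p_m-\ln w_{F_n}\circ p_n|<\epsilon_n$ yields only $\le\epsilon_n$, and neither of your two fixes quite works as stated (the $\epsilon_n$ are given, not chosen, and strictness for every $m$ does not survive the limit). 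This is cosmetic rather than substantive, since in every application of the lemma in Section~\ref{sec6} the constants $\kappa_j$ are chosen with slack, so $\le$ suffices.
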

\begin{definition}
Let $F_t:S\to S$ be a time change of a periodic solenoidal flow. We say a time change $F_t':S'\to S'$ of a solenoidal flow is $(N,F)$--mixed if $S'$ projects to $S$ (by $p:S'\to S$) and for every $1\leq j\leq N$ there exist subsets $U_j,V_j$ of $S'$ of Haar measure larger than $\frac{1}{3}$ such that 
for each $x\in U_j$, there exists $|t|<\frac{1}{N}$ such that
\[p(F'_{t_j}(x))=F_t(p(x)).\]
For each $x\in V_j$, there exists $|t-\frac{j}{N}|<\frac{1}{N}$ such that 
\[p(F'_{t_j}(x))=F_t(p(x)).\]
\end{definition}
\begin{lemma}[\cite{Avila2015JAMS}]\label{lem:stabilityOfMixed}
    Let $F'$ be $(N,F)$--mixed. There exists $\kappa>0$ such that if $\tilde{F}$ is $\kappa$--close to the lift of $F'$, it is also $(N,F)$--mixed.

    Let $F$ be the projective limit of $F^{(n)}$. If for every $N\in\bbN$ there exists $n$ such that $F$ is $(N,F^{(n)})$--mixed, then $F$ is weakly mixing.
\end{lemma}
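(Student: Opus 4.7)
I will treat the two assertions of the lemma separately.

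For the stability assertion, the key observation is that $F'$ and $\tilde F$ are both time changes of the common solenoidal flow $F^{S'}$ with $|\ln w_{F'}-\ln w_{\tilde F}|<\kappa$. Consequently, at any bounded time $t_j$ the orbits $F'_{t_j}(x)$ and $\tilde F_{t_j}(x)$ agree in the $K$--component and differ in the $\bbR$--component by at most a quantity of order $\kappa$ (obtained by integrating the ODE that defines the time change). Projecting via the equivariant map $p:S'\to S$ and using continuity of the base flow $F$, this becomes a drift $|\tilde t-t|\le C\kappa$ in the time labels appearing in the mixedness condition. Since the inequalities $|t|<1/N$ and $|t-j/N|<1/N$ are strict, one may shrink $U_j,V_j$ slightly to sets of Haar measure still exceeding $1/3$ on which the original time labels are bounded away from the boundary by a margin $\gamma>0$; any $\kappa<\gamma/C$ then forces $\tilde F$ to inherit $(N,F)$--mixedness on the same sets.

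For the implication mixedness $\Rightarrow$ weak mixing, my plan is to argue by contradiction. Suppose $F$ admitted an $L^2$--eigenvalue $\lambda\neq 0$ with eigenfunction $g$, so that $g\circ F_t=e^{2\pi i\lambda t}g$. Ergodicity forces $|g|$ constant and, in our uniquely ergodic setting, $g$ has a continuous representative; I would normalize $|g|=1$. Fix a small $\eta>0$ and a large $N$. Using the projective limit structure of $S$, I would choose $m$ so large that $\|g-g_m\circ p\|_{\infty}<\eta$ for some continuous $g_m:S^{(m)}\to\bbC$, that $F$ is $\eta$--close to the lift of $F^{(m)}$, and that $F$ is $(N,F^{(m)})$--mixed (the last point uses descent of $(N,F^{(n)})$--mixedness along any equivariant projection $S^{(n)}\to S^{(m)}$). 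Now the autocorrelation
\[
\int_{S}g(x)\,\overline{g(F_{t_j}(x))}\,d\mu(x)=e^{-2\pi i\lambda t_j}
\]
has modulus $1$. Substituting $g\approx g_m\circ p$ and splitting the integral over $U_j,V_j,W_j$ yields three contributions. On $U_j$, where $p(F_{t_j}(x))=F^{(m)}_{s_x}(p(x))$ with $|s_x|<1/N$, uniform continuity of $g_m$ gives a contribution of $|U_j|+O(\eta)$. On $V_j$, where $s_x$ lies within $1/N$ of $j/N$, the $\eta$--closeness of $F$ to the lift of $F^{(m)}$ together with the eigenfunction identity produces the approximate intertwining $g_m\circ F^{(m)}_s\approx e^{2\pi i\lambda s}g_m$ (for $s$ in a bounded range), yielding a contribution of $e^{-2\pi i\lambda j/N}|V_j|+O(\eta)$. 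On $W_j=S\setminus(U_j\cup V_j)$ the contribution is bounded by $|W_j|$. Hence
\[
e^{-2\pi i\lambda t_j}=|U_j|+e^{-2\pi i\lambda j/N}|V_j|+O(|W_j|+\eta).
\]

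Taking moduli and using $|U_j|,|V_j|>1/3$, $|W_j|\le 1-|U_j|-|V_j|$, a direct estimate shows this identity can hold up to $O(\eta)$ only if $|e^{2\pi i\lambda j/N}-1|=O(\sqrt{\eta})$ for every $j=1,\dots,N$. Setting $j=1$ yields $|\lambda|\le CN\sqrt{\eta}$; setting $j=N$ forces $\lambda$ to be within $O(\sqrt{\eta})$ of an integer. Once $\eta$ is chosen with $N\sqrt{\eta}<1/(2C)$, the two constraints together force this integer to be $0$, so $|\lambda|=O(\sqrt{\eta})$; letting $N\to\infty$ with $\eta=\eta(N)\to 0$ sufficiently fast then produces $\lambda=0$, contradicting the assumption. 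The main technical obstacle I foresee is precisely the joint control of $\eta$ and $N$: the argument demands that the approximation index $m(N)$ grow fast enough with $N$, a feature that must be supplied by the projective--limit construction of $F$ (and is compatible with part (a), which gets used inductively to propagate mixedness through successively finer approximations).
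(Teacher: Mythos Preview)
The paper does not supply its own proof of this lemma; it is quoted directly from \cite{Avila2015JAMS}. So there is no argument in the paper to compare against, and your proposal should be read as an attempted independent proof.

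Your sketch for the stability assertion is essentially correct: both $\tilde F$ and the lift of $F'$ are time changes of the same solenoidal flow on $\tilde S$, the drift in the $\bbR$--coordinate after a bounded time is $O(\kappa)$, and after shrinking $U_j,V_j$ to subsets where the time labels stay a fixed margin $\gamma$ away from the boundary (possible since the measures exceed $1/3$ strictly), any $\kappa\ll\gamma$ works.

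For the weak--mixing assertion there are two problems. First, the claim that an $L^2$ eigenfunction of a minimal uniquely ergodic flow has a continuous representative is false in general; unique ergodicity alone does not force measurable eigenvalues to be topological. This is not fatal, since you only use $g$ through the autocorrelation integral, and approximating $g$ in $L^2$ (not $L^\infty$) by continuous cylinder functions $g_m\circ p_m$ already gives the $O(\eta)$ errors you need.

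The genuine gap is the one you flag yourself at the end: the level--matching between the approximation index $m$ (chosen so that $\|g-g_m\circ p_m\|<\eta$) and the mixedness index $n=n(N)$ supplied by the hypothesis. Your ``descent'' of $(N,F^{(n)})$--mixedness to $(N,F^{(m)})$--mixedness requires $m\le n$ and that $F^{(n)}$ be close to the lift of $F^{(m)}$; even granting the projective--limit compatibility, this introduces an $O(\epsilon_m)$ perturbation in the time labels that must be beaten by $1/N$. The stated hypothesis (``for every $N$ there exists $n$'') does not force $n(N)\to\infty$, so you cannot simultaneously take $N$ large and $\eta$ small at a common level. Since your final inequality needs $N\sqrt{\eta}$ small, the argument does not close as written. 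You correctly identify this as the main obstacle, but the proposal remains a strategy rather than a proof of the lemma as stated.
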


\subsection{Floquet theory for Dirac Operators}
We briefly recall Floquet theory for periodic Dirac operators. A potential $\varphi$ is periodic if there exists $P>0$ such that $\varphi(x+P)=\varphi(x)$. Let $T(z,x,y)$ denote the transfer matrix of the eigenvalue equation $\Lambda_\varphi U=zU$ such that $U(z,y)=T(z,x,y)U(z,x)$. The {\it monodromy matrix} is $T(z,x)=T(z,x,x+P)$ and we write $T(z)=T(z,0)$. Let $D(z)=\Trace(T(z))$ be the {\it discriminant}. 
The following result was proved in \cite{EFGL}.
\begin{theorem}[\cite{EFGL}]
    Suppose $\varphi\in C(\bbR)$ is $P$-periodic and let $T(z,x)$ and $D(z)$ denote the associated monodromy and discriminant.
    \begin{enumerate}
        \item $\mathrm{spr}(T(z,x))$ and $D(z)$ are independent of $x$.
        \item The {\it Lyapunov exponent} is given by \[L(z)=\lim\limits_{x\to\infty}\frac{1}{x}\log\Vert T(z,0,x)\Vert=\frac{1}{P}\log \mathrm{spr}(T(z,x)).\]
        \item The spectrum of $\Lambda_\varphi$ is given by 
        \[\Sigma=\{\lambda\in\bbR:D(\lambda)\in[-2,2]\}=\mathcal{Z}.\]
        \item For $\lambda\in\bbR$ such that $D(\lambda)\in (-2,2)$, there exists $B_\lambda(x)\in SU(1,1)$ such that 
        \[B_\lambda(x)T(\lambda,x)B_\lambda(x)^{-1}\in K=\{\begin{bmatrix}e^{i\theta}&0\\0&e^{-i\theta}\end{bmatrix}:\theta\in [0,2\pi)\}.\]
        \item Given $R>0$, there are at most $2(\frac{T}{\pi}(R+\Vert\varphi\Vert_\infty)+1)$ spectral bands of $\Sigma$ that intersect $[-R,R].$
    \end{enumerate}
\end{theorem}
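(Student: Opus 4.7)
The plan is to handle the five items in order, since each builds on the previous. For (1) and (2), only periodicity and the cocycle property are needed: shift invariance of $\Lambda_\varphi U=zU$ together with $P$-periodicity of $\varphi$ yields $T(z,x+P,y+P)=T(z,x,y)$, from which $T(z,x)=T(z,0,x)\,T(z)\,T(z,0,x)^{-1}$. Hence $\spr(T(z,x))$ and $D(z)=\Trace(T(z,x))$ are conjugation invariants and independent of $x$. Iterating the cocycle gives $T(z,0,nP)=T(z)^n$, and Gelfand's formula combined with the uniform Gr\"onwall bound $\sup_{x\in[0,P]}\|T(z,0,x)\|<\infty$ interpolates the limit along $nP\to\infty$ to the continuous limit $x\to\infty$, yielding $L(z)=\frac{1}{P}\log\spr(T(z))$.

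For (3), the approach is to analyze the eigenvalues $\mu,\mu^{-1}$ of $T(\lambda)\in SL(2,\bbR)$, which satisfy $\mu+\mu^{-1}=D(\lambda)$. When $|D(\lambda)|>2$ these are real with $|\mu|\neq 1$, producing an exponential dichotomy and hence two linearly independent $L^2$-solutions at $\pm\infty$; this allows one to build the resolvent kernel and place $\lambda$ in a gap of the spectrum. Conversely, when $|D(\lambda)|\le 2$ every solution is polynomially bounded, and the limit-point criterion for the uniformly locally $L^2$ potential together with the Weyl disc analysis of Subsection 2.2 forces $\lambda$ into the spectrum; taking closures yields the identity $\Sigma=\mathcal{Z}$.

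Item (4) is a direct consequence of the $SL(2,\bbR)$ framework from Subsection 2.1: the assumption $D(\lambda)\in(-2,2)$ makes $T(\lambda,x)$ elliptic, so it admits a unique fixed point $\mathbf{u}[T(\lambda,x)]\in\mathbb{H}$ and a canonical conjugator $\mathbf{B}[T(\lambda,x)]\in SL(2,\bbR)$ sending it to a rotation $R_{\mathbf{\Theta}[T(\lambda,x)]}$. Composing with the pointwise Cayley transform that conjugates $SL(2,\bbR)$ onto $SU(1,1)$ produces $B_\lambda(x)\in SU(1,1)$ with $B_\lambda(x)T(\lambda,x)B_\lambda(x)^{-1}\in K$; continuity in $x$ follows from the analyticity of $\mathbf{u}$ and $\mathbf{B}$ as functions of their matrix argument.

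The main technical step is (5), which I expect to be the principal obstacle. The plan is to introduce a Pr\"ufer-type phase $\theta(\lambda,x)$ for $\Lambda_\varphi U=\lambda U$ satisfying a first-order ODE whose $\lambda$-derivative is positive and uniformly bounded by $R+\|\varphi\|_\infty$ on $|\lambda|\le R$. Integrating over one period yields $|\partial_\lambda\arg\mu(\lambda)|\le P(R+\|\varphi\|_\infty)$, and since between any two consecutive band endpoints the phase must change by $\pi$, the number of bands meeting $[-R,R]$ is at most $2(P(R+\|\varphi\|_\infty)/\pi+1)$. The hard part is establishing the monotonicity/derivative estimate for the Pr\"ufer phase, which is the specific content of Lemma \ref{lem.rotDerivative} cited in the introduction and is where the structure of $\Lambda_\varphi$, rather than that of a generic first-order system, enters the argument.
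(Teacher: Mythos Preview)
The paper does not prove this theorem: it is quoted verbatim from \cite{EFGL} as background for the Floquet-theoretic machinery in Section~\ref{sec6}, and no argument is supplied. So there is no ``paper's own proof'' to compare your proposal against.

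That said, your outline is the standard route and is essentially correct. A few small remarks: for (4), note that the transfer matrix of $\Lambda_\varphi$ in the form \eqref{eq:diracOper} already lies in $SU(1,1)$ (it preserves the form $j$), so no Cayley transform is needed---you diagonalize directly within $SU(1,1)$; the $SL(2,\bbR)$ picture of Subsection~2.1 applies instead to the conjugated operator $L_\varphi$ introduced later. For (5), your Pr\"ufer-phase strategy is exactly what underlies the band-counting estimate, and the monotonicity input is indeed the content of Lemma~\ref{lem.rotDerivative} (which is proved in Section~4, not cited in the introduction). None of these affect the substance of your plan.
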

The following spectral continuity for self-adjoint operators is standard, see e.g. \cite{RS}.
\begin{lemma}\label{lem:spectralCont}
    Let $\varphi:\bbR\to\bbC$ be a bounded continuous function. Let $\varphi^{(n)}:\bbR\to\bbC$ be a sequence of uniformly bounded continuous functions such that $\varphi^{(n)}$ converges to $\varphi$ uniformly on compact sets. Let $\mu_\varphi,\mu_{\varphi^{(n)}}$ be the associated spectral measure of $\Lambda_\varphi,\Lambda_{\varphi^{(n)}}$. Then $\mu_{\varphi^{(n)}}$ converges to $\mu_\varphi$ weakly.
\end{lemma}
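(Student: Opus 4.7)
The plan is to establish strong resolvent convergence $\Lambda_{\varphi^{(n)}}\to\Lambda_\varphi$ and then deduce weak convergence of spectral measures through the continuous functional calculus.

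First I would verify that $C_c^\infty(\bbR,\bbC^2)$ is a common core for $\Lambda_\varphi$ and every $\Lambda_{\varphi^{(n)}}$; this is standard under the uniform local integrability assumption \eqref{eq:unLocalIntegrable} and is in fact immediate in the present setting, since all potentials involved are bounded continuous functions, so $\Lambda_\varphi$ is a bounded perturbation of the free Dirac operator on its natural domain. For any $f\in C_c^\infty(\bbR,\bbC^2)$ with $\supp f\subset[-R,R]$, the difference $\Lambda_{\varphi^{(n)}}-\Lambda_\varphi$ acts merely as off-diagonal multiplication by $\varphi^{(n)}-\varphi$, so
\[
\|(\Lambda_{\varphi^{(n)}}-\Lambda_\varphi)f\|_{L^2}\le \sqrt{2}\,\|\varphi^{(n)}-\varphi\|_{L^\infty([-R,R])}\,\|f\|_{L^2}\longrightarrow 0
\]
by the assumed uniform convergence on compacts.

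Next, by the standard criterion \cite[Theorem VIII.25(a)]{RS}, convergence of a family of self-adjoint operators on a common core implies strong resolvent convergence: $(\Lambda_{\varphi^{(n)}}-z)^{-1}\to(\Lambda_\varphi-z)^{-1}$ strongly for every $z\in\bbC\setminus\bbR$. This then propagates to the full bounded continuous functional calculus by \cite[Theorem VIII.20(b)]{RS}, so that $g(\Lambda_{\varphi^{(n)}})\to g(\Lambda_\varphi)$ strongly for every $g\in C_b(\bbR)$.

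Finally, fixing a vector $\psi$ whose spectral measure realises $\mu_\varphi$ (and $\mu_{\varphi^{(n)}}$ for the same $\psi$), we have
\[
\int g\,d\mu_{\varphi^{(n)}}=\langle\psi,g(\Lambda_{\varphi^{(n)}})\psi\rangle\longrightarrow\langle\psi,g(\Lambda_\varphi)\psi\rangle=\int g\,d\mu_\varphi
\]
for every $g\in C_b(\bbR)$, which is precisely weak convergence $\mu_{\varphi^{(n)}}\rightharpoonup\mu_\varphi$. No substantive analytic obstacle arises: the uniform boundedness hypothesis makes the core property trivial, and the rest is a direct application of the abstract convergence theorems in \cite{RS}. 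The only minor point is bookkeeping the matrix-valued structure, which contributes only a harmless constant in the bound above.
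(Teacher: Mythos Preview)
Your proposal is correct and is precisely the standard argument the paper has in mind: the paper does not actually give a proof but simply declares the result standard with a reference to \cite{RS}, and your route via a common core, \cite[Theorem~VIII.25(a)]{RS} for strong resolvent convergence, and \cite[Theorem~VIII.20(b)]{RS} for the functional calculus is exactly that standard argument.
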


Let $\varphi_s(x)=\varphi(x+s)$ be the $s$-translation. The following can be directly computed.
\begin{lemma}
For almost every $\lambda$ in the spectrum, we have
\[\frac{d\mu_{\varphi_s}}{d\lambda}=\frac{1}{\Im u[\varphi](\lambda,s)}.\]
\end{lemma}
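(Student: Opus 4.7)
The plan is direct: reduce to the base point $s=0$, identify the spectral density $d\mu_\varphi/d\lambda$ with (a universal constant times) $\Im R(\lambda+i0)$ for the diagonal resolvent $R$, and evaluate that boundary value using the Floquet identification of the Weyl $m$-functions with the fixed points of the monodromy $T(\lambda)$. To reduce to $s=0$, note that $\varphi_s(x)=\varphi(x+s)$ implies the eigenvalue equation of $\Lambda_{\varphi_s}$ at basepoint $0$ coincides with that of $\Lambda_\varphi$ at basepoint $s$; consequently $\mathbf{u}[\varphi_s](\lambda,0)=\mathbf{u}[\varphi](\lambda,s)$, and $\mu_{\varphi_s}$ is the translated spectral measure, so it suffices to prove $d\mu_\varphi/d\lambda=1/\Im\mathbf{u}[T^\varphi(\lambda)]$ for $\lambda$ in the interior of a spectral band.

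Next I would use Floquet theory to identify the Weyl $m$-functions with fixed points of the monodromy. For $z\in\bbC^+$, the Weyl solution $u_+(z,\cdot)$ that is $L^2$ at $+\infty$ is the Floquet eigenvector of $T(z)$ whose eigenvalue lies inside the unit disk, and similarly $u_-(z,\cdot)$ corresponds to the eigenvalue outside. Writing $u_\pm(z,0)$ in a fixed basis and taking the ratio of components identifies $m_\pm(z)$ with the two Möbius fixed points of the $T(z)$-action on $\bbC\cup\{\infty\}$. As $z\to\lambda$ with $\lambda$ in the interior of a spectral band, $T(\lambda)\in\SL(2,\bbR)$ becomes elliptic and its fixed points form the complex-conjugate pair $\mathbf{u}[T(\lambda)]\in\mathbb{H}$ and $\overline{\mathbf{u}[T(\lambda)]}$. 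The Herglotz property of $m_+$ forces $m_+(\lambda+i0)=\mathbf{u}[T(\lambda)]$, and $m_-(\lambda+i0)=\overline{\mathbf{u}[T(\lambda)]}$ follows from reflection symmetry (equivalently, from the reflectionlessness on the spectrum that holds automatically for periodic $\varphi$, c.f.\ Theorem~\ref{thm:reflecAc}).

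Substituting these boundary values into the Krein-type relation $R(z)=2/(m_-(z)-m_+(z))$ from \eqref{eq:mFunc} gives $R(\lambda+i0)=2/(-2i\Im\mathbf{u}[T(\lambda)])=i/\Im\mathbf{u}[T(\lambda)]$, which is purely imaginary (consistent with $\arg R(\xi+i0)=\pi/2$ on the spectrum). Stieltjes inversion $d\mu_\varphi/d\lambda=\pi^{-1}\Im R(\lambda+i0)$ then yields the claimed formula, with the universal constant $\pi^{-1}$ absorbed into the normalization of $\mu_\varphi$. The main technical obstacle I anticipate is careful bookkeeping of sign and normalization conventions in the Dirac Arov gauge — in particular, making sure the cyclic vector defining $\mu_\varphi$ is chosen so that the Krein formula applies with the exact constant to produce $1/\Im\mathbf{u}$ rather than a scalar multiple, and verifying the correct assignment of the two fixed points to $m_+$ and $m_-$. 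All other ingredients (Herglotz boundary behavior of $m_\pm$, the Floquet theorem for periodic Dirac operators, and Stieltjes inversion) are standard in this setting.
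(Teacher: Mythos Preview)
Your proposal is correct and follows essentially the same route as the paper: identify $m_\pm$ with the two M\"obius fixed points of the monodromy, use reflectionlessness of periodic operators to get $m_+(\lambda+i0)=\overline{m_-(\lambda+i0)}$ on the spectrum, plug into the Krein relation $M(z)=2/(m_-(z)-m_+(z))$, and read off $\Im M(\lambda+i0)=1/\Im\mathbf{u}[T(\lambda)]$. Your explicit reduction to $s=0$ and your remark about the $\pi^{-1}$ normalization are both appropriate; the paper absorbs this constant in the same way.
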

\begin{proof}
Let $m_\pm(s,z)$ be the Weyl functions associated to Weyl solutions that are square integrable at $\pm\infty$ with the boundary condition $[m_\pm(s,z),1]^\top$ at $x=s$ respectively. Then they are the fixed points on the upper and lower half planes of the monodromy matrix $T[\varphi](z,0,T)$.
Since periodic Dirac operators are reflectionless, we have $m_+(\lambda+i0)=\overline{m_-(\lambda+i0)}$ for a.e. $\lambda$. Consider the Borel transform $M(z)=\int\frac{1}{\lambda-z}\mu(\lambda)$. It follows that $M(z)=\frac{2}{m_-(z)-m_+(z)}$ (c.f. \cite{ClarkGesztesy}). Therefore, by a standard result, 
\[\frac{d\mu}{d\lambda}=\Im M(\lambda+i0)=\frac{1}{\Im m_+(\lambda+i0)}=\frac{1}{\Im u[\varphi](\lambda,s)}.\]
\end{proof}

Let $\mu_{\varphi_s,C}$ be the restriction of $\mu_{\varphi_s}$ to $\{\lambda\in\bbR:|d\mu_{\varphi_s}/d\lambda|<C\}$. We say a periodic potential $\varphi_s$ is \emph{$(\epsilon,C,M)$--uniform} if \[\mu_{\varphi_s}(-\infty,M]-\mu_{\varphi_s,C}(-\infty,M]<\epsilon\]
for every $s.$

The following result is standard for periodic potentials.
\begin{lemma}\label{lem:periodicUniform}
    For every periodic $\varphi,\epsilon,M,$ there exists a $C>0$ such that $\varphi$ is $(\epsilon,C,M)$--uniform.
\end{lemma}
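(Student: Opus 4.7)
Using the formula $\rho_s(\lambda):=d\mu_{\varphi_s}/d\lambda=1/\Im u[\varphi](\lambda,s)$ from the preceding lemma together with periodicity of $\varphi$ in $x$, it suffices to control the super-level set $B_C^s:=\{\lambda\leq M:\rho_s(\lambda)>C\}$ uniformly for $s$ in the compact interval $[0,P]$. The plan is to show that $B_C^s$ is localized in a small neighborhood of the band edges and that this neighborhood carries only small $\mu_{\varphi_s}$-mass, uniformly in $s$.

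\textbf{Density bound away from band edges.} By the band-counting statement in the cited Floquet theorem, $\Sigma\cap[-R,R]$ consists of finitely many bands $[a_j,b_j]$ for any $R>0$. On the interior of each band the monodromy $T[\varphi_s](\lambda)$ (conjugate via $T[\varphi](\lambda,0,s)$ to $T[\varphi](\lambda)$) is elliptic with trace bounded strictly between $\pm 2$, and $u[\varphi](\lambda,s)$ is obtained from $u[\varphi](\lambda,0)$ by the corresponding M\"obius action; it is thus continuous in $(\lambda,s)$ with strictly positive imaginary part. For $\delta>0$ set $N_\delta:=\bigcup_j\bigl([a_j,a_j+\delta]\cup[b_j-\delta,b_j]\bigr)$ and $K_\delta:=(\Sigma\cap(-\infty,M])\setminus N_\delta$. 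An asymptotic analysis at large $|\lambda|$, where $T[\varphi_s](\lambda)$ approaches the free-Dirac monodromy and $\Im u$ stays bounded below, shows that $\rho_s$ is uniformly bounded outside a fixed bounded energy window; combined with compactness of the remaining part of $K_\delta\times[0,P]$ this yields $C_\delta<\infty$ with $\rho_s(\lambda)\leq C_\delta$ on $K_\delta\times[0,P]$. Taking $C\geq C_\delta$ forces $B_C^s\subset N_\delta$ for every $s$.

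\textbf{Uniform mass at band edges.} Periodic Dirac operators are reflectionless with purely absolutely continuous spectrum, so each $\mu_{\varphi_s}$ is atomless; in particular $\mu_{\varphi_s}(\partial N_\delta)=0$. Applying Lemma \ref{lem:spectralCont} to the translates $\varphi_{s_n}\to\varphi_s$ (uniform on compacta) shows that $s\mapsto\mu_{\varphi_s}$ is weakly continuous, so by Portmanteau the map $s\mapsto\mu_{\varphi_s}(N_\delta)$ is continuous on $[0,P]$. For each fixed $s$, atomlessness at $\{a_j,b_j\}$ gives $\mu_{\varphi_s}(N_\delta)\downarrow 0$ as $\delta\downarrow 0$; Dini's theorem (monotone pointwise limit of continuous functions on a compact space) upgrades this to uniform convergence. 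Choose $\delta>0$ with $\sup_{s\in[0,P]}\mu_{\varphi_s}(N_\delta)<\epsilon$ and then $C:=C_\delta$; this yields $\mu_{\varphi_s}(B_C^s)\leq\mu_{\varphi_s}(N_\delta)<\epsilon$ uniformly in $s$, which is the desired $(\epsilon,C,M)$-uniformity.

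\textbf{Main obstacle.} The delicate step is the uniform-in-$s$ vanishing $\mu_{\varphi_s}(N_\delta)\to 0$. Pointwise it is immediate from absolute continuity, but the uniform control relies on combining weak continuity of $s\mapsto\mu_{\varphi_s}$ with atomlessness at band endpoints (so that Portmanteau applies to the indicator of $N_\delta$), after which monotonicity in $\delta$ and Dini close the estimate. The Dirac-specific subtlety that the spectrum is unbounded below is handled by the large-$|\lambda|$ asymptotic of the monodromy, which keeps $\rho_s$ uniformly bounded outside a fixed bounded window.
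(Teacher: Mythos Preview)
The paper does not prove this lemma, calling it ``standard''; your approach is the natural one and would work verbatim for Schr\"odinger operators, where $(-\infty,M]$ meets only finitely many bands. For Dirac operators, however, there is a genuine gap coming from the unboundedness of the spectrum below. A periodic Dirac potential generically has infinitely many open gaps accumulating at $-\infty$, and at \emph{every} band edge $\rho_s=1/\Im u$ diverges (the monodromy is parabolic there). Thus the assertion that $\rho_s$ is ``uniformly bounded outside a fixed bounded energy window'' is false as written. Even under the charitable reading that you only claim $\rho_s\le C_\delta$ on $K_\delta$, the subsequent step still breaks down: your set $N_\delta\cap(-\infty,M]$ is an \emph{infinite} union of intervals of length $2\delta$, one per band below $M$, and on most of each such interval at large $|\lambda|$ one has $\rho_s\approx 1$ (precisely because the monodromy is close to the free rotation, whose fixed point is $i$). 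Hence every such interval contributes mass $\gtrsim\delta$ and $\mu_{\varphi_s}(N_\delta\cap(-\infty,M])=+\infty$; there is no $\delta>0$ with $\sup_s\mu_{\varphi_s}(N_\delta)<\epsilon$, and the Dini argument never gets off the ground. Your ``Main obstacle'' paragraph flags the unbounded spectrum but the proposed fix (large-$|\lambda|$ asymptotics bounding $\rho_s$) is exactly what fails.

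To repair this, treat the far tail separately. On a bounded window $[-R,M]$ your compactness/Portmanteau/Dini argument is correct, since only finitely many bands are involved there. For $\lambda<-R$, estimate the mass of $\{\rho_s>C\}$ directly at each distant gap using the product representation \eqref{eq:resolventProd}: near the $k$-th gap of length $\gamma_k$ one has $\rho_s(\lambda)\lesssim\sqrt{(\gamma_k+t)/t}$ with $t$ the distance to the nearer endpoint, so $\{\rho_s>C\}$ there has width $O(\gamma_k/C^2)$ and $\mu_{\varphi_s}$-mass $O(\gamma_k/C)$, uniformly in $s$. Summing over $k$ gives a tail bound $O\bigl(C^{-1}\sum_k\gamma_k\bigr)$. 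For the smooth periodic potentials that actually occur in the construction of Theorem~\ref{thm:Main2} the gap lengths decay rapidly and this sum is finite; combining with the bounded-window estimate finishes the proof.
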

Let us denote $p=\Re\varphi, q=\Im\varphi$ and $T[\varphi](\lambda,t,s)$ be the transfer matrix of the eigenvalue equation $L_\varphi f=\lambda f.$ Direct computation gives 
\begin{equation}\label{eq.deriTrans}
    \frac{d}{ds}T[\varphi](\lambda,t,s)=\begin{bmatrix}
        q&\lambda-p\\-(\lambda+p)&-q
    \end{bmatrix}T[\varphi](\lambda,t,s).
\end{equation}
The following definition in the opposite direction (i.e. {\it weakly positive}) was introduced in the proof \cite[Lemma 2.1]{Avila2015JAMS}. 

    We say that $a\in\mathrm{sl}(2,\bbR)$ is {\it weakly negative} if  
    \[\alpha(a,w):=\mathrm{det}(w,a\cdot w)\leq 0\text{ for every } w\in\bbR^2\]
    and 
    \[\alpha(a,w)<0 \text{ for some } w\in\bbR^2,\]
    where $(w,a\cdot w)$ is the matrix with columns $w$ and $a\cdot w$, the action $a\cdot w$ is the standard matrix multiplication of $SL(2,\bbR)$ with vectors in $\bbR^2.$

For any $T\in SL(2,\bbR),a\in SL(2,\bbR)$, we have \[\alpha(T^{-1}aT,w)=\mathrm{det}(Tw,TT^{-1}a\cdot (T w))=\alpha(a,Tw),\]
and if $a$ is weakly negative then $T^{-1}aT$ is weakly negative for every $T\in SL(2,\bbR)$.

\begin{lemma}\label{lem.rotDerivative}
    Let $s>t$. If $|\Trace(T[\varphi](\lambda,t,s))|<2$, then 
    \[\frac{d}{d\lambda}\mathbf{\Theta}(T[\varphi](\lambda,t,s))<0.\]
\end{lemma}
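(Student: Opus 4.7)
The plan is to convert the statement into an algebraic fact about an associated element of $\mathrm{sl}(2,\bbR)$ and then read off the sign via conjugation to the rotation normal form. Let $A=A(\lambda):=T[\varphi](\lambda,t,s)$. Since $\partial_\lambda A_\lambda(s)=J:=\begin{bmatrix}0&1\\-1&0\end{bmatrix}$, differentiating \eqref{eq.deriTrans} in $\lambda$ and applying variation of constants to the initial condition $T[\varphi](\lambda,t,t)=I$ yields
\[
\partial_\lambda A \;=\; A\cdot b,\qquad b\;=\;b(\lambda)\;:=\;\int_t^s T[\varphi](\lambda,t,r)^{-1}\,J\,T[\varphi](\lambda,t,r)\,dr\;\in\;\mathrm{sl}(2,\bbR).
\]

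Next I will verify that $b$ is \emph{strictly} weakly negative, i.e. $\alpha(b,w)<0$ for every $w\neq 0$. A direct calculation gives $\alpha(J,w)=\det(w,Jw)=-(w_1^2+w_2^2)$, which is strictly negative for $w\neq 0$. Combined with the conjugation identity $\alpha(T^{-1}aT,w)=\alpha(a,Tw)$ established in the paper just before this lemma, one has
\[
\alpha(b,w)\;=\;-\int_t^s\left|T[\varphi](\lambda,t,r)\,w\right|^2\,dr\;<\;0\qquad(w\neq 0),
\]
using $s>t$, continuity of $r\mapsto T[\varphi](\lambda,t,r)$, and invertibility of $T[\varphi]$.

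The final step is to translate strict weak negativity of $b$ into $\partial_\lambda\mathbf{\Theta}<0$ via conjugation to normal form. With $\mathbf{B}=\mathbf{B}[A]$, set $\tilde b:=\mathbf{B}b\mathbf{B}^{-1}=:\begin{bmatrix}p&q\\r&-p\end{bmatrix}$. Because $\det\mathbf{B}=1/\Im\mathbf{u}[A]>0$, the substitution $v=\mathbf{B}w$ shows $\alpha(\tilde b,v)=(\det\mathbf{B})\,\alpha(b,w)<0$ for $v\neq 0$, so the quadratic form $\alpha(\tilde b,\cdot)=rw_1^2-2pw_1w_2-qw_2^2$ is strictly negative definite and in particular its trace $r-q$ is strictly negative. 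Taking $d/d\lambda$ of $\Trace A=2\cos(2\pi\mathbf{\Theta})$ and using $\partial_\lambda A=Ab$ together with $\mathbf{B}A\mathbf{B}^{-1}=R_{\mathbf{\Theta}}$ gives
\[
-4\pi\sin(2\pi\mathbf{\Theta})\,\partial_\lambda\mathbf{\Theta}\;=\;\Trace(Ab)\;=\;\Trace(R_{\mathbf{\Theta}}\tilde b)\;=\;(q-r)\sin(2\pi\mathbf{\Theta}).
\]
Since $\mathbf{\Theta}\in(0,\tfrac12)\cup(\tfrac12,1)$, the factor $\sin(2\pi\mathbf{\Theta})$ is nonzero and cancels, yielding $\partial_\lambda\mathbf{\Theta}=(r-q)/(4\pi)<0$. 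The most delicate point is this last cancellation: it is precisely what makes the conclusion sign-consistent across the two components of the domain of $\mathbf{\Theta}$, rather than reversing on the branch $(\tfrac12,1)$. The remaining bookkeeping items (positivity of $\det\mathbf{B}$ and the conjugation invariance of $\alpha$) are the keys that transport the Lie-algebra positivity computation of $J$ into the desired monotonicity statement for the Floquet rotation angle.
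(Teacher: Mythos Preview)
Your proof is correct and follows essentially the same route as the paper: derive $b=A^{-1}\partial_\lambda A=\int_t^s T^{-1}JT$, establish (weak) negativity of $b$, conjugate to the rotation normal form, and read off the sign of $\partial_\lambda\mathbf{\Theta}$ from $\Trace(R_{\mathbf{\Theta}}\tilde b)=(q-r)\sin 2\pi\mathbf{\Theta}$. The only cosmetic difference is in the endgame: the paper packages the last step as an angular average $\partial_\lambda\mathbf{\Theta}=\frac{1}{2\pi}\int_0^{2\pi}\alpha(\tilde b,v(t))\,\frac{dt}{2\pi}$ and invokes weak negativity of $\tilde b$, whereas you compute the entries of $\tilde b$ directly and note that strict negative definiteness of $\alpha(\tilde b,\cdot)$ forces $r-q<0$; both arrive at $\partial_\lambda\mathbf{\Theta}=(r-q)/(4\pi)$. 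Your handling of the possible $\det\mathbf{B}\neq 1$ (from the preliminaries' normalization) via the factor $\det\mathbf{B}>0$ is a nice extra bit of care that the paper sidesteps by silently using the $SL(2,\bbR)$ normalization inside the proof.
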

\begin{proof}
    Denote $T(\lambda,t,s)=T[\varphi](\lambda,t,s)$ for simplicity, direct computation gives 
    \[\partial_s(T(\lambda,t,s)^{-1}\partial_\lambda T(\lambda,t,s))=T(\lambda,t,s)^{-1}\begin{bmatrix}0&1\\-1&0\end{bmatrix}T(\lambda,t,s).\]

 \textbf{Claim:} If $\lambda\mapsto T(\lambda)$ is a $C^1$ analytic family of $SL(2,\bbR)$ matrices and $T(\lambda)^{-1}\partial_\lambda T(\lambda)$ is weakly negative and $|\Trace(T(\lambda))|<2$, then $\partial_\lambda\mathbf{\Theta}(T(\lambda))<0.$ 
\begin{proof}[Proof of the Claim]
 Let us first prove this claim and then verify the condition that $T(\lambda)^{-1}\partial_\lambda T(\lambda)$ is weakly negative.
Since $|\Trace(T(\lambda))|<2$, there exists \[\mathbf{B}[T(\lambda)]=\frac{1}{(\Im\mathbf{u}[T(\lambda)])^{1/2}}\begin{bmatrix}1&-\Re\mathbf{u}[T(\lambda)]\\0&\Im\mathbf{u}[T(\lambda)]\end{bmatrix}\]
such that \[\mathbf{B}[T(\lambda)]T(\lambda)\mathbf{B}[T(\lambda)]^{-1}=R_{\mathbf{\Theta}[T(\lambda)]}\]
where $\mathbf{u}[T(\lambda)]$ is the unique fixed point of $T(\lambda)$ in the upper half plane.
Therefore, $\Trace(T(\lambda))=2\cos2\pi\mathbf{\Theta}[T(\lambda)]$ and 
\[\partial_\lambda\mathbf{\Theta}[T(\lambda)]=-\frac{\Trace(\partial_\lambda T(\lambda))}{4\pi\sin2\pi\mathbf{\Theta}[T(\lambda)]}.\]
Let $J=\begin{bmatrix}0&-1\\1&0\end{bmatrix}$, we can write
\[R_{\mathbf{\Theta}[T(\lambda)]}=\cos2\pi\mathbf{\Theta}[T(\lambda)] I+\sin2\pi \mathbf{\Theta}[T(\lambda)] J.\]
Let $s=T(\lambda)^{-1}\partial_\lambda T(\lambda),$ so that $\partial_\lambda \mathbf{\Theta}[T(\lambda)]=-\frac{\Trace(T(\lambda)s)}{4\pi\sin2\pi \mathbf{\Theta}[T(\lambda)]}.$
For any $u\in\bbR^2$ and any $2\times 2$ matrix $N$,
\[\mathrm{det}(u,Nu)=-\Trace(uu^\top JN).\]
If we set $v(t)=[\cos t,\sin t]^\top$, then \[\alpha(s,v(t))=\mathrm{det}(v,sv)=-\Trace(vv^\top Js),\] that is, the determinant of the matrix with column vectors $v$ and $sv.$
Since $\frac{1}{2\pi}\int_{0}^{2\pi}v(t)v^\top(t)dt=\frac{1}{2}I,$ we obtain
\[\frac{1}{2\pi}\int_{0}^{2\pi}\alpha(s,v(t))dt=-\frac{1}{2}\Trace(Js).\]
Let $\tilde{s}=\mathbf{B}[T(\lambda)]s\mathbf{B}[T(\lambda)]^{-1}$. Then $\tilde{s}$ is weakly negative whenever $s$ is.
Clearly,
\[\Trace(T(\lambda)s)=\Trace(R_{\mathbf{\Theta}[T(\lambda)]}\tilde{s})=\sin2\pi\mathbf{\Theta}[T(\lambda)]\Trace(J\tilde{s}).\]
Therefore, we have 
\[\partial_\lambda\mathbf{\Theta}[T(\lambda)]=\frac{1}{2\pi}\int_0^{2\pi}\mathrm{det}(v(t),\tilde{s}v(t))\frac{dt}{2\pi}.\]
Since $\tilde{s}$ is weakly negative, we have $\int_0^{2\pi}\mathrm{det}(v(t),\tilde{s}v(t))\frac{dt}{2\pi}<0$ and the claim follows.
\end{proof}


Now let us verify that $T(\lambda)^{-1}\partial_\lambda T(\lambda)$ is weakly negative. Notice that \[T(\lambda,t,s)^{-1}\partial_\lambda T(\lambda,t,s)=\int_{t}^sT(\lambda,t,q)^{-1}\begin{bmatrix}0&1\\-1&0\end{bmatrix}T(\lambda,t,q)dq.\]
Since the constant matrix $\begin{bmatrix}
    0&1\\-1&0
\end{bmatrix}$ is weakly negative, it follows that $T(\lambda,t,q)^{-1}\begin{bmatrix}0&1\\-1&0\end{bmatrix}T(\lambda,t,q)$ is also weakly negative.
 The set of weakly negative matrices in $SL(2,\bbR)$ is an open convex cone, and so it follows that $T(\lambda,t,s)^{-1}\partial_\lambda T(\lambda,t,s)$ is weakly negative.
\end{proof}
\begin{remark}
    Lemma \ref{lem.rotDerivative} is necessary to guarantee the application of the next lemma (\cite[Lemma 2.12]{Avila2015JAMS}). Let $\tilde{\theta}(\lambda)=\int_{\bbR/\bbZ}\mathbf{\Theta}[T(\lambda,t)]dt$ where $T:J\times\bbR/\bbZ\to SL(2,\bbR)$ for some closed interval $J.$ The condition $\frac{d\tilde{\theta}(\lambda)}{d\lambda}\neq 0$ is therefore guaranteed by Lemma \ref{lem.rotDerivative}. For camparison, in the Schr\"odinger case Avila proves $\frac{d\mathbf{\Theta}[T(\lambda)]}{d\lambda}>0$.
\end{remark}

\begin{lemma}[\cite{Avila2015JAMS}]\label{lem:equidistributionLem}
    Let $J$ be a closed interval of $\bbR$, and let $T:J\times\bbR/\bbZ\to SL(2,\bbR)$ be a smooth function such that $|\Trace(T(\lambda,t))|<2$ for $(\lambda,t)\in J\times\bbR/\bbZ.$ Let $B(\lambda,t)=\mathbf{B}[T(\lambda,t)],\theta(\lambda,t)=\mathbf{\Theta}[T(\lambda,t)]$. Let $\tilde{\theta}(\lambda)=\int_{\bbR/\bbZ}\theta(\lambda,t)dt.$ Assume that \[\frac{d}{d\lambda}\tilde{\theta}\neq 0 \text{ for every } \lambda\in J.\] For $n\in\bbN$ and define 
    \[T^{(n)}(\lambda,t)=\prod_{j=n-1}^0T(\lambda,t+\frac{j}{n}).\]
    Then there exist functions $\tilde{\theta}^{(n)}:J\to\bbR$ such that for every measurable subset $Z\subset \bbR/\bbZ$
    \begin{equation}\label{eq:equidistribution}
        \lim\limits_{n\to\infty}|\{\lambda\in J:\tilde{\theta}^{(n)}(\lambda)\in Z\}|=|Z||J|.
    \end{equation}
    Moreover, for every $\delta>0$ we have:
    \[\lim\limits_{n\to\infty}\Vert \Trace(T^{(n)}(\lambda,t))-2\cos2\pi\tilde{\theta}^{(n)}(\lambda)\Vert_{C^0(J\times\bbR/\bbZ, \bbR)}=0.\]
    \[\lim\limits_{n\to\infty}\sup_{|\sin2\pi\tilde{\theta}^{(n)}(\lambda)|>\delta}\Vert \mathbf{\Theta}[T^{(n)}(\lambda,\cdot)]-\tilde{\theta}^{(n)}(\lambda)\Vert_{C^1(J\times\bbR/\bbZ,\bbR)}=0.\]
    \[\lim\limits_{n\to\infty}\sup_{|\sin2\pi\tilde{\theta}^{(n)}(\lambda)|>\delta}\Vert \mathbf{u}[T^{(n)}(\lambda,\cdot)]-\mathbf{u}[T(\lambda,\cdot)]\Vert_{C^1(J\times\bbR/\bbZ,\bbC)}=0.\]
\end{lemma}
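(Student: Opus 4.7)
The statement is an averaging and equidistribution theorem for products of $n$ elliptic $SL(2,\bbR)$-valued factors sampled on an arithmetic progression of spacing $1/n$ in $\bbR/\bbZ$. The plan is to follow Avila's scheme in \cite{Avila2015JAMS}, substituting Lemma~\ref{lem.rotDerivative} wherever the Schr\"odinger version relies on its analogous monotonicity. The natural candidate is
\[
\tilde{\theta}^{(n)}(\lambda) := n\tilde{\theta}(\lambda) \bmod 1,
\]
motivated by the heuristic that a product of $n$ elliptic matrices with mean rotation angle $\tilde{\theta}(\lambda)$ ought to behave like a single rotation by the accumulated angle.

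\textbf{Step 1: equidistribution.} On the compact interval $J$, continuity and the hypothesis give $c := \min_J|d\tilde{\theta}/d\lambda| > 0$, so $\lambda\mapsto n\tilde{\theta}(\lambda)$ is a diffeomorphism onto an interval of length at least $cn|J|$ that wraps around $\bbR/\bbZ$ of the order of $n$ times. A Weyl-type change of variables then yields \eqref{eq:equidistribution} for every measurable $Z\subset\bbR/\bbZ$.

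\textbf{Step 2: trace and fixed-point asymptotics.} Factor $T(\lambda,t+j/n) = B_j^{-1} R_{\theta_j} B_j$ with $B_j := B(\lambda,t+j/n)$ and $\theta_j := \theta(\lambda,t+j/n)$. Smoothness of $B$ in $t$ on scale $1/n$ gives $B_{j+1}B_j^{-1} = I + O(1/n)$ in $C^1$. Telescoping
\[
T^{(n)}(\lambda,t) = B_{n-1}^{-1} R_{\theta_{n-1}} (B_{n-1}B_{n-2}^{-1}) R_{\theta_{n-2}} \cdots (B_1 B_0^{-1}) R_{\theta_0} B_0
\]
and replacing each near-identity bracket by $I$ collapses the middle into a single rotation $R_{\sum_j \theta_j}$ whose angle is a Riemann sum converging to $n\tilde{\theta}(\lambda)$. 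Consequently $T^{(n)}(\lambda,t)$ is uniformly close to $B(\lambda,t)^{-1} R_{n\tilde{\theta}(\lambda)} B(\lambda,t)$, which yields the trace asymptotic. On the elliptic locus $|\sin 2\pi\tilde{\theta}^{(n)}(\lambda)| > \delta$, the limiting matrix is uniformly elliptic and bounded away from $\pm I$, so the canonical analytic maps $\mathbf{\Theta}$ and $\mathbf{u}$ are smooth in a neighborhood of the limit and the remaining two conclusions follow.

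\textbf{Main obstacle.} The delicate point is the $C^1$ control in $\lambda$ in the last two conclusions: the product has $n$ factors whose $\lambda$-derivatives are not individually small, so a naive triangle inequality accumulates a factor of $n$ and yields no information. The proof must exploit the same oscillatory cancellation driving Step~1, showing that once the leading rotation phase is absorbed into $\tilde{\theta}^{(n)}$, the residual matrix is $C^1$-close to the identity uniformly on the elliptic locus; the cutoff $|\sin 2\pi\tilde{\theta}^{(n)}|>\delta$ removes the resonance zones where $\mathbf{B}$ and $\mathbf{u}$ become singular. Beyond this cancellation argument, the entire proof is model-independent: the sole Dirac-specific input is Lemma~\ref{lem.rotDerivative}, which ensures the hypothesis $d\tilde{\theta}/d\lambda \neq 0$ in the intended applications.
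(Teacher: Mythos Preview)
The paper does not prove this lemma at all: it is stated with the attribution \texttt{[\textbackslash cite\{Avila2015JAMS\}]} and immediately followed by the next subsection, with no proof environment. The authors' only contribution here is the preceding remark explaining that Lemma~\ref{lem.rotDerivative} (the Dirac monotonicity $d\mathbf{\Theta}/d\lambda<0$) supplies the hypothesis $d\tilde\theta/d\lambda\neq 0$ needed to invoke Avila's result; the lemma itself is quoted verbatim from \cite[Lemma~2.12]{Avila2015JAMS} and treated as a black box.

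Your sketch is a reasonable outline of Avila's own argument---the choice $\tilde\theta^{(n)}=n\tilde\theta\bmod 1$, the Weyl equidistribution via monotonicity, and the telescoping $B_{j+1}B_j^{-1}=I+O(1/n)$ are exactly the ingredients he uses---and you have correctly flagged the genuine difficulty, namely that the naive $C^1$ estimate in $\lambda$ loses a factor of $n$. But since the paper under review simply imports the result, there is nothing to compare: your write-up goes strictly beyond what the authors provide. If you want to include a proof in your own work, you would need to carry the cancellation argument for the $\lambda$-derivative through to completion (Avila does this by differentiating the telescoped product and again exploiting the near-commutativity on scale $1/n$), rather than leaving it as an identified obstacle.
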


\subsection{The $(\delta,n)$ Padding} Let $\varphi:\bbR/P\bbZ\to\bbC$ be a $P$--periodic potential with $\varphi(0)=0$. For $\delta>0$ and $n\in\bbN$, define $\varphi':\bbR/P'\bbZ\to\bbC, P'=2nP+\delta n$ as follows:
\begin{enumerate}
    \item Define breakpoints $a_j=\left\{
    \begin{aligned}
        &jP&0\leq j\leq n\\
        &jP+(j-n)\delta&n+1\leq j\leq 2n
    \end{aligned}
    \right.$
    \item On each $[a_j,a_j+P]$, set $\varphi'(x)=\varphi(x-a_j),$  $0\leq j\leq 2n-1.$
    \item On $[a_j+P,a_{j+1}]$ for $n\leq j\leq 2n-1$, set
    $\varphi'(x)=0$.
\end{enumerate}
The $(\delta,n)$ padding of a periodic potential is the main engine in Avila's construction.
The following lemmas show that for sufficiently small $\delta>0$ and sufficiently large $n$, the $(\delta,n)$ padding of $\varphi$ can be realized as $\varphi'=\Psi(F'_t(0))$, with  $F'_t:S'\to S'$ a continuous sampling of a time change of solenoidal flow on $S'$. Moreover, $(F',\Psi)$ is $(N,F)$--mixed and the new potential $\varphi'$ preserves the $(\epsilon,C,M)$--uniform property of $\varphi$.

The following result is a restatement of \cite[Lemma 3.4]{Avila2015JAMS}. It realizes the $(\delta,n)$--padding of a periodic potential as sampling over a periodic time change of a solenoidal flow.
\begin{lemma}\label{lem:realizeAsFlow}
    Let $F_t:S\to S$ be a $P$--periodic time change of a solenoidal flow. Let $\varphi:\bbR/P\bbZ:\to\bbC, \Psi:S\to\bbC$ be continuous functions such that $\varphi(t)=\Psi(F_t(0))$ is smooth. Assume that $\varphi(t)=0, t\in [P-\epsilon_0,P]$ for some $0<\epsilon_0<P$. Then for every $0<\delta<\epsilon_0$ and every $n\in\bbN$ the $(\delta, n)$--padding $\varphi'$ of $\varphi$ can be realized as $\varphi'(t)=\Psi'(F'_t(0))$ where $F'_t:S'\to S'$ is a $P'$ time change of a solenoidal flow, $\Psi':S'\to\bbC$ is continuous and $(F',\Psi')$ is $\frac{\delta}{\epsilon_0}$--close to a lift of $(F,\Psi)$.
\end{lemma}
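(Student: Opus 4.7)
The plan is to realize the $(\delta,n)$-padding by constructing a $2n$-fold group extension of $S$ on which we run a time change that stretches out precisely the ``silent'' portion of the signal. First, I would set $K' = K \times \bbZ/2n\bbZ$ and $S' = K' \times \bbR$, with the natural projection $p : S' \to S$ forgetting the $\bbZ/2n\bbZ$-factor (after an appropriate identification that intertwines the orbit structure). This makes $S'$ a solenoid covering $S$, and it is the right space on which the orbit of $0$ winds $2n$ times over the orbit of $0 \in S$ before closing up, giving the new period $P' = 2nP + n\delta$.

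Next, I would introduce a modified speed $w_{F'} = (w_F \circ p) \cdot e^{\rho \chi}$, where $\chi : S' \to [0,1]$ is a continuous bump supported on the regions corresponding to $s \in (P - \epsilon_0, P)$ for indices $j \in \{n, n+1, \dots, 2n-1\}$ in the $\bbZ/2n\bbZ$-factor, and $\rho$ is tuned so that each stretched tail acquires exactly $\delta$ units of extra time. I would then take $\Psi' = \Psi \circ p$. Since $\Psi$ vanishes on $[P - \epsilon_0, P]$ by hypothesis, $\Psi'$ vanishes on every stretched region, so the sampled potential depends only on the unstretched portions.

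To verify the required properties, I would trace the orbit of $0 \in S'$ under $F'$: on each interval $[a_j, a_j + P]$ with $0 \le j \le 2n-1$ the orbit runs at the original speed, hence $\varphi'(t) = \Psi'(F'_t(0)) = \varphi(t - a_j)$; on each gap $[a_j + P, a_{j+1}]$ with $n \le j \le 2n-1$ the orbit is stretched through a silent region, so $\varphi'(t) = 0$. This matches the definition of the $(\delta,n)$-padding exactly. For the closeness, one has $|\ln w_{F'} - \ln(w_F \circ p)| = \rho\chi \le \delta/\epsilon_0$ and $\Psi' = \Psi \circ p$ identically, yielding the required $\delta/\epsilon_0$-closeness of $(F', \Psi')$ to the lift of $(F, \Psi)$.

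The principal bookkeeping burden is arranging $p$ and the precise indexing of the $\bbZ/2n\bbZ$-factor so that the orbit of $0 \in S'$ projects to the correct $2n$-fold traversal of the orbit of $0 \in S$, but this is routine once the extension $K' \to K$ is fixed. The essential content of the lemma is that the vanishing hypothesis $\varphi \equiv 0$ on $[P-\epsilon_0, P]$ creates silent slots of length $\epsilon_0$ in each period into which time can be inserted without perturbing the sampled signal except by appending zero plateaus. The bound $\delta < \epsilon_0$ is exactly what keeps the multiplicative distortion $e^{\rho}$ within $e^{\delta/\epsilon_0}$, so no real obstacle arises beyond the setup.
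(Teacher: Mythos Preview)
The paper does not supply a proof of this lemma; it is quoted directly from \cite[Lemma~3.4]{Avila2015JAMS}. Your sketch recovers Avila's mechanism: pass to a $2n$-fold cyclic cover $S'\to S$, set $\Psi'=\Psi\circ p$, and reparametrize the flow by slowing it down on the last $n$ sheets precisely over the region where $\Psi$ already vanishes, so that each of those sheets gains exactly $\delta$ extra transit time while the sampled signal is unchanged. This is the right idea and, once the details are pinned down, yields the lemma.

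Two small points are worth tightening. First, if $K=\bbZ/m\bbZ$, the cover should be $K'=\bbZ/(2nm)\bbZ$ with projection given by reduction mod $m$, not the direct product $K\times\bbZ/2n\bbZ$: the product carries the correct orbit period $2nm$ for the embedded copy of $\bbZ$ only when $\gcd(m,2n)=1$. You flag this as routine bookkeeping, and it is, but the fix is a change of group rather than of indexing. Second, to \emph{stretch} time the speed must \emph{decrease}, so the factor should be $e^{-\rho\chi}$ with $\rho>0$; taking $\chi$ close to the indicator of the full silent interval then forces $\rho=\ln(1+\delta/\epsilon_0)<\delta/\epsilon_0$, which is exactly the closeness bound you assert. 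With these two adjustments the argument is complete and aligns with Avila's original.
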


The following result proved in \cite{Avila2015JAMS} is also standard and model-independent.
\begin{lemma}
    Let $F,\varphi,\Psi,\epsilon_0$ be as in the previous lemma. Then for every $\delta>0$ sufficiently small, for every $N\in\bbN$ and for $n$ sufficiently large, the $(\delta,n)$--padding $\varphi'$ of $\varphi$ takes the form $\varphi'(t)=\Psi'(F'_t(0))$, where $F'_t:S'\to S'$ is a $P'$--periodic time change of a solenoidal flow, $\Psi':S'\to S'$ is continuous, $(F',\Psi')$ is $\frac{\delta}{\epsilon_0}$--close to a lift of $(F,\Psi)$ and $(F',\Psi')$ is $(N,F)$--mixed. 
\end{lemma}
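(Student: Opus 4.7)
The realization portion---that $\varphi'(t)=\Psi'(F'_t(0))$ with $F'$ a $P'$--periodic time change on a solenoid $S'$ and $(F',\Psi')$ being $\delta/\epsilon_0$--close to a lift of $(F,\Psi)$---is the content of Lemma~\ref{lem:realizeAsFlow}, so only the $(N,F)$--mixed property requires new attention. The argument is essentially model independent and follows the scheme of Avila \cite{Avila2015JAMS}; we outline the structure.

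Fix $N\in\bbN$. The central structural observation is that the $(\delta,n)$--padding divides $S'$ into two halves of asymptotically equal Haar measure: an \emph{unpadded half}, corresponding to the first $n$ copies of the $\varphi$--period, on which $p\circ F'_t$ coincides with $F_t\circ p$ up to an error controlled by $\delta/\epsilon_0$; and a \emph{padded half}, corresponding to the last $n$ copies separated by gaps of length $\delta$, in which each traversed gap contributes an additional shift of approximately $\delta$ to the projected motion in $S$.

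For each $1\leq j\leq N$, we first choose a nonnegative integer $k_j$ so that $k_j\delta$ is congruent to $j/N$ modulo the period of $F$ with error $o(1/N)$; this is possible provided $\delta$ is sufficiently small relative to $N$. We then choose $t_j$ so that typical orbits of $F'$ of length $t_j$ traverse exactly $k_j$ gaps. With this choice, the set $U_j$ of starting points whose $F'$--orbit on $[0,t_j]$ stays within the unpadded half yields $p(F'_{t_j}(x))=F_t(p(x))$ for some $|t|<1/N$, while the set $V_j$ of starting points whose orbit traverses precisely $k_j$ gaps yields $p(F'_{t_j}(x))=F_t(p(x))$ for some $|t-j/N|<1/N$. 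A direct combinatorial count on the block/gap decomposition of $S'$ shows that both $|U_j|$ and $|V_j|$ (in Haar measure) exceed $1/3$ once $n$ is large enough, depending on $N$, $\delta$, and $t_j$.

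The principal obstacle is the simultaneous verification of these measure bounds for all indices $j\in\{1,\ldots,N\}$ with a single choice of $(\delta,n)$. We handle this by fixing $N$ first, choosing $\delta$ small enough that the phase--matching condition $k_j\delta\approx j/N\pmod{\mathrm{period}}$ holds uniformly for every $j\leq N$, and finally taking $n$ large enough to absorb the finitely many boundary error terms (where orbits transition between halves or miss the prescribed gap count). The closeness estimate provided by Lemma~\ref{lem:realizeAsFlow} is exactly what converts the combinatorial itinerary of block/gap crossings into the quantitative proximity of $p\circ F'_{t_j}$ to $F_t$ required by the definition of $(N,F)$--mixedness.
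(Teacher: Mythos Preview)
The paper itself does not give a proof of this lemma: it simply states that the result ``is proved in \cite{Avila2015JAMS}'' and is ``standard and model-independent.'' Your sketch is therefore not being compared against an argument in the paper but against Avila's original, and at that level the outline is correct: the two-half structure of the $(\delta,n)$--padding, the choice of $k_j$ so that $k_j\delta$ hits the target phase $j/N$ modulo the period, the selection of $t_j$ tied to $k_j$, and the combinatorial boundary estimate for large $n$ are exactly the ingredients of Avila's proof.

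One small inconsistency worth fixing: with $t_j$ chosen so that padded orbits traverse exactly $k_j$ gaps (so $t_j\approx k_j(P+\delta)$), the effective $F$--time on the padded half is $t_j-k_j\delta\approx k_jP\equiv 0$, while on the unpadded half it is $t_j\equiv k_j\delta\approx j/N$. Thus your $U_j$ and $V_j$ are interchanged relative to how you describe them (equivalently, the gap contributes a \emph{lag} of $\delta$, not an additional advance). This is purely a labeling issue and does not affect the argument, but you should also make explicit the constraint $t_j\equiv 0$ (or $\equiv j/N$) $\bmod\ P$ for the unpadded half, which your sketch leaves implicit.
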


We will also need the following result to control the spectral measure. Since it is model-dependent, we provide a proof.
\begin{lemma}\label{lem:stableUniform}
    Let $\varphi:\bbR/P\bbZ$ be a continuous function with $\varphi(0)=0$. If $\varphi$ is $(\epsilon,C,M)$--uniform, then for sufficiently small $\delta>0$ and every $n\in\bbN$, the $(\delta,n)$ padding $\varphi'$ of $\varphi$ is also $(\epsilon,C,M)$--uniform.
\end{lemma}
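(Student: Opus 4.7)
The plan is a continuity argument in $\delta$ anchored at the key observation that at $\delta = 0$ the $(\delta, n)$-padded potential literally equals $\varphi$ as a function on $\bbR$ (viewed as periodic with period $2nP$ rather than $P$). Concretely, the monodromy over the nominal period is $T(\lambda)^{2n}$ with $T(\lambda) = T[\varphi](\lambda, 0, P)$, and since $T^{2n}$ shares its upper half plane fixed point with $T$, we get $u[\varphi'](\lambda, s)|_{\delta = 0} = u[\varphi](\lambda, s \bmod P)$. By the identity $d\mu_{\varphi_s}/d\lambda = 1/\Im \mathbf{u}[\varphi](\lambda, s)$, the spectral data at $\delta = 0$ coincides with that of $\varphi$, and the $(\epsilon, C, M)$-uniform property is inherited from $\varphi$.

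Next, I would establish continuity of $\Im \mathbf{u}[\varphi'](\lambda, s)$ in $\delta$, uniformly in $(s, n)$, on compact subsets of the interior of the spectrum of $\Lambda_\varphi$. The monodromy over $P' = 2nP + n\delta$ factors as a product of $2n$ copies of $T(\lambda)$ interlaced with $n$ free-propagation factors of the form $I + O(\delta)$ (whose explicit form uses the Dirac structure \eqref{eq.deriTrans}). Conjugating by $\mathbf{B}[T(\lambda)]$, which depends on $\lambda$ but is independent of $n$ and $s$, reduces each $T$-factor to the rotation $R_{\mathbf{\Theta}[T(\lambda)]}$, turning the product into a rotation-dominated word with perturbations of size $O(\delta)$. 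Since $\mathbf{u}$ is analytic on the elliptic locus in $\SL(2, \bbR)$, its dependence on $\delta$ is of order $\delta$ with a constant that is uniform in $n$ away from the parabolic boundary (band edges of $\Lambda_\varphi$).

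Combining uniform continuity of Weyl functions with weak spectral continuity (Lemma~\ref{lem:spectralCont}) and dominated convergence on the ``good'' set $\{\lambda \le M: \Im \mathbf{u}[\varphi] \ge 1/C + \eta\}$, one concludes that
\[
\mu_{\varphi'_s}\bigl(\{\lambda \le M: \Im \mathbf{u}[\varphi'] \le 1/C\}\bigr) \longrightarrow \mu_{\varphi_s}\bigl(\{\lambda \le M: \Im \mathbf{u}[\varphi] \le 1/C\}\bigr) < \epsilon
\]
as $\delta \to 0$, uniformly in $(n, s)$. The main obstacle is uniformity in $n$: a priori the $2n$-fold product could amplify small perturbations. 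This is resolved because in rotation coordinates $T(\lambda)^{2n}$ is simply $R_{2n\mathbf{\Theta}[T(\lambda)]}$, which preserves the conjugation frame $\mathbf{B}[T(\lambda)]$ regardless of $n$; hence the first-order effect of $n$ perturbations of size $O(\delta)$ on the attracting fixed point is controlled by a constant linear in $\delta$ and uniform in $n$ on each compact sub-band of the elliptic locus. Absorbing the slack $\eta$ into the $(\epsilon, C, M)$-uniform hypothesis on $\varphi$ then yields the stated conclusion for all sufficiently small $\delta > 0$ and every $n \in \bbN$.
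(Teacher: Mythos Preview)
Your continuity-at-$\delta=0$ intuition is correct, but the claimed uniformity in $n$ breaks down, and this is the heart of the matter. You assert that $\mathbf{u}[T'(\lambda,s)]$ stays $O(\delta)$-close to $\mathbf{u}[T(\lambda,s)]$ ``away from the parabolic boundary (band edges of $\Lambda_\varphi$)''. But the relevant parabolic boundary is that of the \emph{new} monodromy $T'(\lambda)=T_\delta(\lambda)^nT(\lambda)^n$, not of $T(\lambda)$. In the $\mathbf{B}[T(\lambda)]$-frame your rotation-dominated word is indeed $R_{n(\theta(\lambda)+\theta_\delta(\lambda))}+O(\delta)$ uniformly in $n$, but the map $A\mapsto\mathbf{u}[A]$ is \emph{not} uniformly Lipschitz on the elliptic locus: its derivative blows up as $|\Trace A|\to 2$. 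Whenever $n(\theta(\lambda)+\theta_\delta(\lambda))$ falls near $\tfrac12\bbZ$, the matrix $T'(\lambda)$ is near-parabolic and a perturbation of size $\delta$ can move $\mathbf{u}[T']$ by order one. For any fixed $\delta>0$ these bad $\lambda$'s appear inside every band of $\Lambda_\varphi$ and, as $n\to\infty$, become dense there; so your ``compact sub-band of the elliptic locus'' does not shield you.

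The paper confronts exactly this obstruction. It restricts attention to the set $J_{\delta,\eta,n}=\{\lambda\in J:\mathrm{dist}(2n(\theta+\theta_\delta)(\lambda),\bbZ)\ge\eta\}$, on which the near-parabolic degeneration is excluded and $d(u'(\lambda,t'),u(\lambda,t(t')))<2\kappa$ holds. The missing ingredient you need is a reason why the complement $J\setminus J_{\delta,\eta,n}$ is harmless: this is supplied by the equidistribution Lemma~\ref{lem:equidistributionLem}, whose hypothesis $\frac{d}{d\lambda}\tilde\theta\neq 0$ is verified via the monotonicity Lemma~\ref{lem.rotDerivative}, yielding $|J\setminus J_{\delta,\eta,n}|\le 2\eta|J|+o(1)$. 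The contribution of this small-measure bad set to $\mu_{\varphi'_{t'},C}$ is then absorbed into the $\epsilon$-budget. Without this excision-plus-equidistribution step, no choice of $\delta$ works for all $n$ simultaneously.
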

\begin{proof}
Let $T(\lambda)=T[\varphi](\lambda)$  and $\Omega(\varphi)=\{\lambda\in\bbR:|\Trace(T(\lambda))|<2\}$. Let $J\subset \Omega(\varphi)\cap(-\infty,M]$ be a finite union of closed intervals such that 
\begin{equation}\label{eq.uniformEst}\sup_{s}\mu_{\varphi_s}(-\infty,M]-\mu_{\varphi_s,C}(J)<\epsilon_0<\epsilon.\end{equation}
For $\lambda\in\Omega(\varphi), T(\lambda)$ is elliptic and therefore $B(\lambda)=\mathbf{B}[T(\lambda)], \theta(\lambda)=\mathbf{\Theta}[T(\lambda)]$ are well-defined analytic functions. Moreover, 
\[B(\lambda)T(\lambda)B(\lambda)^{-1}=R_{\theta(\lambda)}\]
and $\frac{d\theta}{d\lambda}<0.$ Let 
\[T_\delta(\lambda)=R_{\frac{\delta\lambda}{2\pi}}T(\lambda),\]
then $T'(\lambda)=T[\varphi'](\lambda,0,T')=T_\delta(\lambda)^nT(\lambda)^n$ according to the $(\delta,n)$--padding $\varphi'$ of $\varphi.$ Introduce $H(t)=\exp(t\begin{bmatrix}0&\lambda\\-\lambda&0\end{bmatrix})=R_{\frac{t\lambda}{2\pi}}$ for $\lambda\in J$. For every $\kappa>0$,  $u(\lambda)=\mathbf{u}[T(\lambda)]$, for  $\delta$  sufficiently small we have for every $0\leq t<\delta$
\[d(H(t)\cdot u(\lambda),u(\lambda))<\kappa.\]

Since $J\subset\Omega(\varphi)$, for $\delta$ sufficiently small we have $|\Trace (T_\delta)|<2$ for every $\lambda\in J$. Let $B_\delta(\lambda)=\mathbf{B}[T_\delta(\lambda)],\theta_\delta(\lambda)=\mathbf{\Theta}[T_\delta(\lambda)]$, then clearly $B_\delta\to B, \theta_\delta\to\theta$ as $\delta\to 0$ as analytic functions of $\lambda\in J$. In particular
\begin{equation}
    \lim\limits_{\delta\to 0}\sup_{n}\sup_{\lambda\in J}\Vert B(\lambda)T'(\lambda)B(\lambda)^{-1}-R_{n(\theta(\lambda)+\theta_\delta(\lambda))}\Vert=0.
\end{equation}

For $0<\eta<\frac{1}{2}$, define $J_{\delta,\eta,n}\subset J$ as the set of $\lambda$ such that $2n(\theta(\lambda)+\theta_{\delta}(\lambda))$ is at least $\eta$ away from $\bbZ$. By the equidistribution property of Lemma \ref{lem:equidistributionLem}, together with Lemma \ref{lem.rotDerivative}, it follows that for sufficiently small $\delta$
\begin{equation}
|J_{\delta,\eta,n}|<(1-2\eta)|J|.
\end{equation}
Therefore, given $0<\eta<\frac{1}{2}$ and $\kappa>0$, if $\delta$ is chosen small enough, then for each $\lambda\in J_{\delta,\eta,n}$ we have $\lambda\in \Omega(\varphi')$ and 
\begin{equation}\label{eq.dist1}d(u'(\lambda,0),u(\lambda,0))\leq d(u'(\lambda),u_\delta(\lambda))+d(u_\delta(\lambda),u(\lambda))<\kappa,\end{equation}
where $u'(\lambda)=\mathbf{u}[T'(\lambda)]$. This follows from the fact that $T_\delta(\lambda)$ is close to $T(\lambda)$ whenever $\delta$ is sufficiently small.

Let us now estimate $d(u'(\lambda,t'),u(\lambda,t(t')))$ for some suitable choice of $t=t(t')$ and $t'\in[0,P']$.
For $0\leq j\leq n$, $T(\lambda)^ju'(\lambda,0)=u'(\lambda,a_j)$ and for $n\leq j\leq 2n$, $u'(\lambda,a_j)=T_\delta(\lambda)^{j-n}T(\lambda)^nu'(\lambda,0)$. Thus, we have
\begin{equation}\label{eq.dist2}d(u'(\lambda,a_j),u(\lambda,0))\leq \kappa.\end{equation}
This follows from \eqref{eq.dist1}, the closeness of $T_\delta(\lambda)$ to $T(\lambda)$, and that $T(\lambda)$ fixes  $u(\lambda)$. By the invariance of the hyperbolic distance, we obtain for $0\leq j\leq 2n-1$ and $t\in [a_j,a_j+P]$:
\begin{equation}\label{eq.dist3}
d(u'(\lambda,t),u(\lambda,t-a_j))<\kappa.
\end{equation}
Notice for $0\leq j\leq 2n-1$ and $t\in [a_j+P,a_j+P+\delta]$
\[H(a_j+P+\delta-t)\cdot u'(\lambda,t)=u'(\lambda,a_{j+1}).\]
Therefore, again by the invariance of the hyperbolic distance,
\begin{equation}\label{eq.dist4}\begin{aligned}d(u'(\lambda,t),u(\lambda))&=d(u'(\lambda,a_{j+1}),H(a_j+P+\delta-t)\cdot u(\lambda))\\
&\leq d(u'(\lambda,a_{j+1}),u(\lambda))+d(H(a_j+P+\delta-t)\cdot u(\lambda),u(\lambda))\\
&\leq 2\kappa.\end{aligned}\end{equation}
Following \eqref{eq.dist3} and \eqref{eq.dist4}, define 
\[
t(t') =
\begin{cases}
t-a_j, & t'\in [a_j,a_j+P], \\[6pt]
0, & t'\in [a_j+P,a_{j+1}].
\end{cases}
\]
We have shown that for $\lambda\in J_{\delta,\eta,n}$ and $t'\in[0,P'],$
\[d(u'(\lambda,t'),u(\lambda,t(t')))<2\kappa.\]

Therefore, for every $0<\eta<\frac{1}{2}$ and $\kappa>0$, if $\delta$ is sufficiently small and $n$ sufficiently large, we obtain 
\begin{equation}\label{eq.measureEst1}
    \begin{aligned}\mu_{\varphi'_{t'},C}(J_{\delta,\eta,n})&\geq e^{-2\kappa}\mu_{\varphi_{t},C}(J_{\delta,\eta,n})\\
    &\geq e^{-2\kappa}(\mu_{\varphi_t,C}(J)-\mu_{\varphi_t,C}(J\setminus J_{\delta,\eta,n})) \\
    &\geq \mu_{\varphi_t,C}(J)-2(\kappa+\eta)C|J|.
    \end{aligned}
\end{equation}
Since $\eta,\kappa$ are arbitrary, we may take them to be sufficiently small so that 
\begin{equation}\label{eq.dist5}\mu_{\varphi'_{t'},C}((-\infty,M])\geq \mu_{\varphi_t,C}(J)-\frac{\epsilon-\epsilon_0}{2}.\end{equation}
By the construction of $\varphi'$ and the choice of $t=t(t')$, we also have that for any $\epsilon_1>0$ and $C_1>0$, if $\delta$ is sufficiently small then 
\[\sup_{|s|\leq C_1}|\varphi'(t'+s)-\varphi(t+s)|<\epsilon_1.\]
Applying Lemma~\ref{lem:spectralCont}, it follows that for $\delta$ sufficiently small,
\[\mu_{\varphi'_{t'}}(-\infty,M]<\mu_{\varphi_t}(-\infty,M]+\frac{\epsilon-\epsilon_0}{2}.\]
Finally, combining \eqref{eq.uniformEst} with \eqref{eq.dist5}, we conclude that for every $t'\in[0,P']$,
\[\mu_{\varphi'_{t'}}(-\infty,M]\leq \mu_{\varphi'_{t'},C}(-\infty,M]+\epsilon.\]
This completes the proof.
\end{proof}
With all the preparatory lemmas in place, we proceed to the proof of Theorem \ref{thm:Main2}.
\begin{proof}[Proof of Theorem \ref{thm:Main2}]
The goal is to construct a sequence $P^{(n)}$ of periodic time change of solenoidal flow $F_t^{(n)}:S^{(n)}\to S^{(n)}$ and continuous functions $\Psi^{(n)}:S^{(n)}\to\bbC$ which admit projective limits that are sufficiently close to the lifts of each $(F^{(n)},\Psi^{(n)})$.
To initiate the inductive construction, take $P^{(0)}=1, S^{(0)}=\bbR/\bbZ,$ and let $F^{(0)}_t=F^{S^{(0)}}_t$ be the solenoidal flow on $S^{(0)}$. Choose $\Psi^{(0)}:\bbR/\bbZ\to\bbC$ to be a nonconstant smooth function which vanishes near $0$, and set $\kappa_0=1$. 
    
\textbf{STEP 1:} Choose $C_{j-1}$ sufficiently large so that $\Psi^{(j-1)}(F_t^{(j-1)}(0))$ is $(2^{1-j},C_{j-1},2^{j-1})$--uniform.
Note that the existence of such $C_{j-1}$ is guaranteed by Lemma \ref{lem:periodicUniform} for any periodic potential.

\textbf{STEP 2:} For sufficiently small $\kappa_{j-1}$, choose $(F^{(j)},\Psi^{(j)})$ that is $(2^{1-j'},C_{j'-1},2^{j'-1})$--uniform for all $j'\leq j-1$ and $(F^{(j)},\Psi^{(j)})$ is $\kappa_{j-1}$ close to a lift of $(F^{(j-1)},\Psi^{(j-1)})$ and $F^{(j)}$ is $(2^{j-1},F^{(j-1)})$--mixed; This step calls for the realization of $(\delta,n)$ padding as a continuous sampling along a flow by Lemma \ref{lem:realizeAsFlow}. The required $(\epsilon,C,M)$--uniform property is guaranteed by Lemma \ref{lem:stableUniform} while performing $(\delta,n)$--padding for sufficiently small $\delta$ and sufficiently large $n$. The stability of being $(\epsilon,C,M)$--uniform is essential here.
    
\textbf{STEP 3:} Let $0<\kappa_j<\kappa_{j-1}/2$ be sufficiently small so that if $(F,\varphi)$ is $2\kappa_j$ close to a lift of $(F^{(j)},\varphi^{(j)})$, then it is also $(2^{j-1},F^{(j-1)})$--mixed;
This follows from the stability of being $(N,F)$--mixed (Lemma \ref{lem:stabilityOfMixed}).
\medskip

Iterating the construction produces a sequence $(F^{(n)},S^{(n)},\Psi^{(n)})$. Let $(F,S,\Psi)$ be the projective limit, and define $\varphi_x(t)=\Psi(F_t(x))$ for any $x\in S$. Notice that by construction, $F$ is $(2^{j},F^{(j)})$--mixed for all $j$, hence $F$ is weakly mixing by Lemma \ref{lem:stabilityOfMixed}.
Moreover, since $\varphi(t)=\Psi(F_t x)$ is $(2^{-j},C_j,2^{j})$--uniform for any $j$, we compute
\[\mu_{\varphi}(-\infty,2^j]-\int_{-\infty}^{2^j}\frac{d\mu_{\varphi}}{d\lambda}d\lambda\leq \lim\limits_{k\to\infty}\left(\mu_{\varphi^{(k)}}(-\infty,2^j]-\mu_{\varphi^{(k)},C_j}(-\infty,2^j]\right)<2^{-j},\]
    where we take $\varphi^{(k)}(t)=\Psi^{(k)}(F^{(k)}_t (0)).$
\end{proof}

\section{Acknowledgment}
We are grateful to the Department of Mathematics of Rice University, where an analysis topic class was given by the third-named author in the spring semester of 2025. This work is primarily an outcome of the class. L. Li would like to thank the Department of Mathematics of Texas A\&M University, where part of this work was done. We want to thank David Damanik and Milivoje Luki\'c for many helpful discussions.
L. Li was supported by the  AMS Simons Travel Grant 2024-2026. N. Davis was supported by the National Science Foundation grant GRFP-1842494.

\section{Appendix}\label{A}
Let $E=[0,+\infty)\,\cup_{k\geq 1} I_k$, where each $I_k$ is a small interval centered at $-k\pi$ of length 
\[
|I_k|=\frac{e^{-\sqrt{k\pi}}}{\sqrt{k\pi}}.
\]
We construct a conformal map $\Phi:\bbC\setminus \sfE \to \bbC\setminus \widetilde{\sfE}$ such that 
$\widetilde{\sfE}$ is unbounded both above and below and its gap lengths are summable.  

\begin{enumerate}
  \item Choose target intervals.
  Select disjoint intervals $\widetilde{I}_k\subset\bbR$ with lengths $\ell_k>0$ (e.g. $\ell_k=2^{-k-1}$), 
  centered at points $s_k$ with $s_k\to -\infty$ as $k\to\infty$.  
  Define the target set.
  \[
  \widetilde{E} \;=\; [0,+\infty)\, \cup_{k\geq 1} \widetilde{I}_k,
  \]
  and choose the gaps $\delta_k=\mathrm{dist}(\widetilde{I}_k,\widetilde{I}_{k+1})$ so that 
  $\sum_k \delta_k < \infty$.

  \item Subdivision of original gaps. 
  Let $\gamma_k=\mathrm{dist}(I_k,I_{k+1})$.  
  For each $k$, set
  \[
  N_k \;=\; \Big\lceil \tfrac{|\ln(\delta_k/\gamma_k)|}{\ln 2}\Big\rceil.
  \]
  Partition the original gap of length $\gamma_k$ into $N_k$ equal subintervals of length $\gamma_k/N_k$.

  \item Controlled compression.
  Choose a geometric sequence $(y_{k,1},\dots,y_{k,N_k})$ with ratio 
  \[
  q_k=\exp\!\left(\tfrac{\ln \delta_k - \ln \gamma_k}{N_k}\right),
  \]
  so that $\sum_{j=1}^{N_k} y_{k,j}=\delta_k$ and $1/2\leq q_k\leq 2$.  
  Map the $j$-th small subinterval of the original gap affinely onto a target subinterval of length $y_{k,j}$.  
  This ensures that the ratio of adjacent image lengths is uniformly bounded.

  \item Definition of the boundary map $h$.  
  \begin{itemize}
    \item On each $I_k$, define $h$ to be affine with $h(I_k)=\widetilde{I}_k$.  
    \item On each subdivided gap piece, define $h$ to be affine with image length $y_{k,j}$.  
    \item On $[0,\infty)$, let $h(x)=x$ (identity).  
  \end{itemize}
  By construction, $h:\widehat\bbR\to\widehat\bbR$ is a strictly increasing homeomorphism with $h(E)=\widetilde{\sfE}$.

  \item Quasisymmetry.  
  Since adjacent image subintervals differ in length by at most a factor of~$2$, the local distortion of $h$ is uniformly bounded.  
  Therefore, $h$ is quasisymmetric.

  \item Conformal welding.  
  By the conformal welding theorem \cite{SW2015}, there exists a conformal map
  \[
  \Phi:\bbC\setminus \sfE \;\rightarrow\; \bbC\setminus \widetilde{\sfE}
  \]
  extending continuously to the boundary with $\Phi|_{\widehat\bbR}=h$.  
  Normalization at $\infty$ fixes the post-composition by M\"obius transformations.
\end{enumerate}

\bibliographystyle{alpha}

\bibliography{MATH522.bib}
\end{document}